\theoremstyle{plain}
\newtheorem{thm}{Theorem}[section]
\newtheorem{lem}[thm]{Lemma}
\newtheorem{cor}[thm]{Corollary}
\newtheorem{prop}[thm]{Proposition}
\newtheorem{conj}[thm]{Conjecture}
\theoremstyle{definition}
\newtheorem{definition}[thm]{Definition}
\newtheorem{ex}[thm]{Example}
\newtheorem{rmk}[thm]{Remark}
\newcommand{\neutralize}[1]{\expandafter\let\csname c@#1\endcsname\count@}
\newcommand{\Gal}{{\rm Gal}}
\newcommand{\F}{{\mathbb F}}
\newcommand{\Q}{{\mathbb Q}}
\newcommand{\R}{{\mathbb R}}
\newcommand{\OO}{{\mathcal O}}
\newcommand{\Z}{{\mathbb Z}}
\newcommand{\ord}{\text{\rm ord}}
\begin{document}
\title{Fekete polynomials, quadratic residues, and arithmetic}
\dedicatory{Dedicated to Professor Paulo Ribenboim with gratitude and admiration}
\date{}

 \author{ J\'an Min\'a\v{c}, Tung T. Nguyen, Nguy$\tilde{\text{\^{e}}}$n Duy T\^{a}n }
\address{Department of Mathematics, Western University, London, Ontario, Canada N6A 5B7}
\email{minac@uwo.ca}
\date{\today}

 \address{Department of Mathematics, The University of Chicago, Chicago, Illinois, USA.}
 \email{tungnt@uchicago.edu}
 
  \address{
 School of Applied  Mathematics and 	Informatics, Hanoi University of Science and Technology, 1 Dai Co Viet Road, Hanoi, Vietnam } 
\email{tan.nguyenduy@hust.edu.vn}

\thanks{JM is partially supported  by the Natural Sciences and Engineering Research Council of Canada (NSERC) grant R0370A01. He gratefully acknowledges the Western University Faculty of Science Distinguished Professorship 2020-2021. NDT is funded by Vingroup Joint Stock Company and  supported by Vingroup Innovation Foundation (VinIF) under the project code VINIF.2021.DA00030
}

\begin{abstract}
Fekete polynomials associate with each prime number $p$  a polynomial with coefficients $-1$ or $1$ except the constant term, which is 0. These coefficients reflect the distribution of quadratic residues modulo $p$. These polynomials were already considered in the 19th century in relation to the studies of Dirichlet $L$-functions.  In our paper, we introduce two closely related polynomials. We then express their special values at several integers in terms of certain class numbers and generalized Bernoulli numbers. Additionally, we study the splitting fields and the Galois group of these polynomials. In particular, we propose two conjectures on the structure of these Galois groups. We also provide some computational evidence toward the validity of these conjectures.

\end{abstract}
\maketitle

\section{Introduction}
Recall that for each prime $p$ the Fekete polynomial $F_p(x)$ is defined by 
\[ F_p(x)=\sum_{a=1}^{p-1} \left(\frac{a}{p} \right) x^a .\] 
Here $\left(\dfrac{a}{p}\right)$ is the Legendre symbol which is equal to, for $1\leq a\leq p-1$, the value $1$ if $a$ is a quadratic residue modulo $p$ and the value -1 if $a$ is not a quadratic residue modulo $p$. Because $F_2(x)$ is just $x$, in our subsequent considerations we assume that $p$ is an odd prime number. 

The function $\chi_p(a)= \left(\dfrac{a}{p}\right)$, $a\in \Z$, is a Dirichlet character. The infinite series 
\[
L(s,\chi_p)=\sum_{n=1}^{\infty} \dfrac{\chi_p(n)}{n^s}, \; s=\sigma+it, \sigma,t\in \R,
\]
is an absolutely convergent for $\sigma>1$. It is well-known that $L(s,\chi_p)$ has an analytical continuation to the entire complex plane and this analytical continuation which we also denote as $L(s,\chi_p)$ is a regular function for all complex $s$ (see \cite[Chapter 12]{[Apostol]}). Michael Fekete observed that if $F_p(x)$ has no real zeroes in the interval $0<x<1$ then $L(s,\chi_p)$ has no real zero $s>0$. Therefore, these polynomials $F_p(x)$ are now called the Fekete polynomials. In fact, Fekete conjectured in 1912 that $F_p(x)$ has no real roots between $0$ and 1. George P\'olya in 1919 showed that this conjecture is false for $p=67$ and for infinitely many other primes (see \cite{[Po1]}, \cite{[Po2]}, \cite{[Alex]}.) Polya's examples of $f_p(x)$ with negative values on the interval $[0,1]$ use quadratic reciprocity law and Dirichlet's theorem on primes in arithmetic sequences to establish infinitely many primes $p$
 such that 
 
\[ \left(\frac{2}{p} \right)= \left(\frac{3}{p} \right)=\left(\frac{5}{p} \right)=\left(\frac{7}{p} \right)=\left(\frac{11}{p} \right)=\left(\frac{13}{p} \right)=-1.\]
A simple but elegant argument establishes the existence of $x_0 \in (0,1)$ such that for primes $p$ considered above $f_p(x_0)<0.$ These simple and beautiful considerations were also used in \cite[Problem 46 of part 5]{[Po3]}. Among some further interesting investigations of behaviors of $f_p(x)$ over the interval $[0,1]$ and also its roots in the complex plane, we mention just \cite{[Bateman]},\cite{[Conrey]}. Our main interest in these polynomials lies in exploring their arithmetic and Galois theoretic properties using the interesting interplay between the distribution of quadratic residues, arithmetic properties of Bernoulli numbers, class number formulas, elementary polynomials, and Galois theoretic considerations. This paper is an outgrowth of our further reflections on the special values of $L$-functions considered in \cite{[MTT1]}. Nevertheless, formally our paper is independent of \cite{[MTT1]}. 

We refer \cite[page 231]{[Lemmermeyer]} for some interesting historical comments on Fekete polynomials. In particular, Fekete polynomials already implicitly showed up in Gauss’s sixth proof \cite{[Gauss]} of the quadratic reciprocity law.

The structure of our article is as follows. In Section 2, we focus on determining the multiplicity of the roots $x=1$ and $x=-1$ of $F_p(x)$. Using some of the results in Section 2, we define in Section 3 the  polynomials $f_p(x)$ where we divide $F_p(x)$ by the factor $x(1-x)$ if $p \equiv 3 \pmod{4}$ and by $x(1-x)^2(x+1)$ if $p \equiv 1 \pmod{4}$. We observe further that $f_p(x)$ is a reciprocal polynomial of even degree. From this observation, we define another key polynomial $g_p(x)$ which is closely related to $F_p(x)$ and $f_p(x).$ We call this $g_p(x)$ the ``reduced Fekete polynomial" associated with $p$. These reduced Fekete polynomials are interesting as they contain considerable arithmetic information. In particular, we show that their special values $g_p(-2), g_p(-1), g_p(0), g_p(1), g_p(2) $ are closely related to the class numbers of some some specific quadratic number fields. In Section 4, we investigate some Galois theoretic properties of the splitting fields of $f_p(x)$ and $g_p(x)$ over the rational number field. Based on our results, heuristic considerations, and  numerical evidence, we make the Conjecture \ref{conj1} about the Galois group of $g_p(x)$. In Section 5, we investigate some modular properties of $F_p(x), f_p(x), g_p(x)$ modulo $p$. We conclude our paper with some specific numerical examples of $f_p(x)$ and $g_p(x).$
 

\section{Roots of $F_p(x)$} 

\subsection{Bernoulli numbers and generalized Bernoulli numbers}

We first recall that the Bernoulli polynomials $B_n(x)$ and Bernoulli numbers $B_n$, $n\geq 0$, are defined as  (see (\cite[pp. 7-9]{[Iwasawa]}).)
\[
\dfrac{te^{xt}}{e^t-1}=\sum_{n=0}^\infty B_n(x) \dfrac{t^n}{n!}
\]
and 
\[
B_n(x) =\sum_{k=0}^n \binom{n}{k} B_k x^{n-k}, \quad n\geq 0.
\]
For example, $B_0(x)=1$, $B_1(x)=x+\dfrac{1}{2}$, $B_2(x)=x^2+x+\dfrac{1}{6}$.

 One can define the generalized Bernoulli numbers $B_{n,\chi}$  and Bernoulli polynomials  $B_{n,\chi}(x)$, $n\geq0$, for a  Dirichlet character $\chi$ with conductor $f=f_\chi$ as (see \cite[pages 8-9]{[Iwasawa]})
 \[
 \sum_{a=1}^f \dfrac{\chi(a)te^{(a+x)t}}{e^{ft}-1}=\sum_{n=0}^\infty B_{n,\chi}(x) \dfrac{t^n}{n!}
 \]
 and
 \[
B_{n,\chi}(x) =\sum_{k=0}^n \binom{n}{k} B_{k,\chi} x^{n-k},\quad n\geq 0.
\]

One has the following basic properties 
\begin{enumerate}
\item[(i)] (see \cite[page 10]{[Iwasawa]})
 \begin{equation}
\label{eq:1}
B_{n,\chi}(x)=f^{n-1}\sum_{a=1}^{f}\chi(a) B_n(\dfrac{a-f+x}{f}),\quad n\geq0.
\end{equation} 
\item[(ii)] if $\chi\not=\chi_0$, the trivial character, then (see  \cite[Theorem 2]{[Iwasawa]})
\begin{equation}
    \label{eq:generalizedBernoulli}
    \begin{aligned}
    B_{n,\chi}&\not=0 &\text{ for } n\geq 1, \; n\equiv \delta_\chi\pmod2\\
    B_{n,\chi}&=0 &\text{ for } n\geq 1, \;n\not\equiv \delta_\chi\pmod2.
    \end{aligned}
\end{equation}
Here $\delta_\chi=\begin{cases} 0 &\text{ if } \chi(-1)=1\\
1 &\text{ if } \chi(-1)=-1.
\end{cases}$
\end{enumerate}
In this paper, we deal with the quadratic character $\chi_p(a)=\left(\dfrac{a}{p}\right)$, $a\in \Z$. 
The conductor of $\chi_p$ is $p$ and  
\[
\delta_{\chi_p} = \begin{cases} 0 &\text{ if } p\equiv 1\pmod 4\\
1 &\text{ if } p\equiv 3 \pmod 4.
\end{cases}
\]
In the case $p\equiv 3\pmod 4$, i.e. $\delta_{\chi_p}=0$, and $p>3$, one has
\[
B_{1,\chi_p}=-h(-p),
\]
where $h(-p)$ is the class number of the quadratic imaginary field $\Q(\sqrt{-p})$. This follows from the formula (\cite[Theorem 4.9 (i)]{[Washington]})
\[
L(1,\chi_p)=-\dfrac{\pi}{\sqrt{p}}B_{1,\chi_p}
\]
and the Dirichlet class number formula for a quadratic imaginary field (see e.g. \cite[Formula (15), page 49]{[Davenport]})
\[
h(-p)=\dfrac{\sqrt{p}}{\pi}L(1,\chi_p).
\]

\subsection{The trivial roots of Fekete polynomials}

We can see that $x=0$ is a root of $F_p(x)$. Furthermore, we have 
\[ F_p(1)=\sum_{a=1}^{p-1} \left(\frac{a}{p} \right)=0 .\] 
Therefore, $x=1$ is also root of $F_p(x)$. 
 We have the following result, which is \cite[Lemma 4]{[Conrey]}.   

 \begin{prop} 
 \label{prop:trivialroots}
 Let $p$ be a prime number and $F_p (x)$ is the Fekete polynomial
 \begin{enumerate}
     \item[(a)] The number $1$ is a double root of $F_p$ if $p\equiv 1 \pmod 4$ and a simple one if $p\equiv 3\pmod 4$. 
    \item[(b)] The number $-1$ is a simple root of $F_p$ if $p\equiv 1 \pmod 4$ and it is not a root if $p\equiv 3\pmod 4$.
 \end{enumerate}
 \end{prop}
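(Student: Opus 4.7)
The overall plan is to convert each needed value of $F_p^{(k)}(\pm 1)$ into a power sum $\sum_{a=1}^{p-1} a^j\chi_p(a)$, recognize this via formula (2.1) as a rational multiple of a generalized Bernoulli number $B_{j,\chi_p}$, and then finish by the parity criterion (2.2).

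For (a), non-triviality of $\chi_p$ gives $F_p(1)=0$. Differentiating and plugging $n=1$, $f=p$, $x=0$ into (2.1), and using $\sum_a\chi_p(a)=0$ to absorb the constant piece of $B_1(x)=x+\tfrac12$, produces $F_p'(1)=\sum a\chi_p(a)=p\,B_{1,\chi_p}$. An analogous calculation with $n=2$ and $B_2(x)=x^2+x+\tfrac16$ gives $\sum a^2\chi_p(a)=p\,B_{2,\chi_p}+p^2 B_{1,\chi_p}$, so $F_p''(1)=p\,B_{2,\chi_p}+(p^2-p)\,B_{1,\chi_p}$. Since $\delta_{\chi_p}=1$ iff $p\equiv 3\pmod 4$, criterion (2.2) gives $B_{1,\chi_p}\neq 0$ iff $p\equiv 3\pmod 4$ and $B_{2,\chi_p}\neq 0$ iff $p\equiv 1\pmod 4$, which forces the multiplicities claimed in (a).

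For (b), I first use the substitution $a\mapsto p-a$. Since $p$ is odd, $(-1)^{p-a}=-(-1)^a$ and $\chi_p(p-a)=\chi_p(-1)\chi_p(a)$, so this substitution in $F_p(-1)=\sum_a\chi_p(a)(-1)^a$ gives $(1+\chi_p(-1))F_p(-1)=0$. Hence $F_p(-1)=0$ when $p\equiv 1\pmod 4$. For any odd $p$, splitting by the parity of $a$ and using $\sum_a\chi_p(a)=0$ yields
\[ F_p(-1)=2\chi_p(2)\sum_{k=1}^{(p-1)/2}\chi_p(k). \]
For $p\equiv 3\pmod 4$ and $p>3$, the Dirichlet class-number formula for $\Q(\sqrt{-p})$ evaluates the inner sum as $(2-\chi_p(2))\,h(-p)$; since $h(-p)\geq 1$ and $2-\chi_p(2)>0$, we get $F_p(-1)\neq 0$, so $-1$ is not a root. (The case $p=3$ is a direct check giving $F_3(-1)=-2$.)

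It remains to prove $F_p'(-1)\neq 0$ for $p\equiv 1\pmod 4$. Since $\chi_p(-1)=1$, the involution $a\mapsto p-a$ swaps the parity of $a$ while preserving $\chi_p(a)$; this forces $\sum_{a\,\text{even}}\chi_p(a)=\sum_{a\,\text{odd}}\chi_p(a)=0$ together with $\sum_{a\,\text{odd}}a\chi_p(a)=-\sum_{a\,\text{even}}a\chi_p(a)$, from which
\[ F_p'(-1)=-4\chi_p(2)\sum_{k=1}^{(p-1)/2}k\,\chi_p(k). \]
The main obstacle is showing this half-range character sum is nonzero. Unlike the imaginary-quadratic case handled above, it is not directly captured by a single class number; my plan is to bring in the primitive odd character $\chi_p\chi_4$ of conductor $4p$, whose generalized Bernoulli number $B_{1,\chi_p\chi_4}$ is nonzero by (2.2) (and equals, up to sign, the class number $h(-4p)$ of $\Q(\sqrt{-p})$ by Dirichlet's formula). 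A Chinese Remainder decomposition $\Z/4p\Z\simeq\Z/4\Z\times\Z/p\Z$ applied to the $4p$-periodic identity $4p\,B_{1,\chi_p\chi_4}=\sum_{a=1}^{4p-1}a\,(\chi_p\chi_4)(a)$, combined with the already-established vanishings of the lower moments of $\chi_p$, should allow one to deduce the non-vanishing of $\sum k\,\chi_p(k)$ from that of $B_{1,\chi_p\chi_4}$. This CRT bookkeeping is the delicate technical heart of the argument.
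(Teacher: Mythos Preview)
Your treatment of part (a) and of the vanishing/non-vanishing of $F_p(-1)$ in part (b) is essentially the paper's own proof: the paper packages exactly these computations into Lemmas~2.2--2.4, invoking the same parity criterion (2.2) for $B_{1,\chi_p}$, $B_{2,\chi_p}$ and the same class-number identity $\sum_{k\le (p-1)/2}\chi_p(k)=(2-\chi_p(2))h(-p)$ from Berndt.

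The genuine gap is the last step, showing $\sum_{k=1}^{(p-1)/2}k\,\chi_p(k)\neq 0$ for $p\equiv 1\pmod 4$. Your proposed route through $B_{1,\chi_p\chi_4}$ does not reach this quantity. If you carry out the CRT on $4p\,B_{1,\chi_p\chi_4}=\sum_{a=1}^{4p}a\,(\chi_p\chi_4)(a)$, writing $a=qp+r$ and using $p\equiv 1\pmod 4$ so that $\chi_4(qp+r)=\chi_4(q+r)$, the term $r\sum_{q=0}^{3}\chi_4(q+r)$ drops out and one obtains
\[
2B_{1,\chi_p\chi_4}\;=\;-\sum_{\substack{1\le r\le p-1\\ r\equiv 0,1\ (4)}}\chi_p(r)\;+\;\sum_{\substack{1\le r\le p-1\\ r\equiv 2,3\ (4)}}\chi_p(r),
\]
a signed \emph{zeroth} moment of $\chi_p$ (in fact a repackaging of the quarter-sum $S_{14}$, hence of $h(-4p)$), not the half-range first moment $\sum k\,\chi_p(k)$. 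The lower-moment vanishings you invoke cannot convert one into the other; e.g.\ for $p=13$ one has $\sum_{k\le 6}k\,\chi_{13}(k)=-5$ while $h(-52)=2$. The paper instead cites Berndt's inequality $\sum_{k=1}^{(p-1)/2}k\,\chi_p(k)<0$; equivalently (see the proof of Proposition~2.7) this half-moment is identified via $L(2,\chi_p)$ as a nonzero multiple of $B_{2,\chi_p}$, which is $\neq 0$ by the same criterion (2.2). So the fix is to replace $B_{1,\chi_p\chi_4}$ by $B_{2,\chi_p}$.
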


In \cite{[Conrey]} the authors proved this result by considering the roots of certain continuous real function (see \cite[Formula (2.3)]{[Conrey]}). 
We proceed by relating certain sums to generalized Bernoulli numbers and class number.   We need the following lemmas.

\begin{lem}{}
\label{lem:mult}
\begin{enumerate} 
\item[(a)] $\sum\limits_{a=1}^{p-1}\left(\dfrac{a}{p}\right)a =\begin{cases} 0 &\text { if } p\equiv 1\pmod 4\\
 \not=0 &\text { if } p\equiv 3\pmod 4.
\end{cases}$
\item[(b)] $\sum\limits_{a=1}^{p-1}\left(\dfrac{a}{p}\right)a^2\not=0$.
\end{enumerate}
\end{lem}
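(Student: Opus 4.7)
The plan is to reduce both sums to generalized Bernoulli numbers $B_{1,\chi_p}$ and $B_{2,\chi_p}$ using formula (\ref{eq:1}), and then to invoke the parity vanishing statement (\ref{eq:generalizedBernoulli}). Throughout, set $S_1:=\sum_{a=1}^{p-1}\chi_p(a)\,a$ and $S_2:=\sum_{a=1}^{p-1}\chi_p(a)\,a^2$.

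For part (a), I would specialize (\ref{eq:1}) to $n=1$, $x=0$, $f=p$. Using $B_1(t)=t+\tfrac12$, the expression $B_1\!\left(\tfrac{a-p}{p}\right)$ simplifies to $\tfrac{a}{p}-\tfrac12$, and the constant $-\tfrac12$ disappears because $\sum_{a=1}^{p-1}\chi_p(a)=0$. One is left with
\[
B_{1,\chi_p}=\frac{1}{p}\,S_1,\qquad\text{i.e.,}\qquad S_1=p\,B_{1,\chi_p}.
\]
Part (a) is then immediate from (\ref{eq:generalizedBernoulli}): since $n=1$ is odd, $B_{1,\chi_p}=0$ exactly when $\delta_{\chi_p}=0$, i.e.\ when $p\equiv1\pmod4$, whereas $B_{1,\chi_p}\ne 0$ when $p\equiv3\pmod4$.

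For part (b), I would run the analogous computation with $n=2$. A direct expansion gives
\[
B_2\!\left(\tfrac{a-p}{p}\right)=\tfrac{a^2}{p^2}-\tfrac{a}{p}+\tfrac16,
\]
and again the identity $\sum\chi_p(a)=0$ kills the constant piece, leaving
\[
B_{2,\chi_p}=\frac{1}{p}\,S_2-S_1,\qquad\text{hence}\qquad S_2=p\,B_{2,\chi_p}+p^2\,B_{1,\chi_p}.
\]
Now (\ref{eq:generalizedBernoulli}) splits into two cases. If $p\equiv1\pmod4$, then $B_{1,\chi_p}=0$ but $B_{2,\chi_p}\ne 0$ (since the even index $n=2$ matches $\delta_{\chi_p}=0$), so $S_2=p\,B_{2,\chi_p}\ne 0$. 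If $p\equiv3\pmod4$, the opposite vanishing occurs and $S_2=p^2\,B_{1,\chi_p}\ne 0$. Either way $S_2\neq 0$.

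No genuine obstacle is expected: all the real content is carried by the parity vanishing (\ref{eq:generalizedBernoulli}), already quoted from Iwasawa. The only book-keeping point is the expansion of $B_n\!\left(\tfrac{a-p}{p}\right)$ under the paper's convention $B_1(x)=x+\tfrac12$, and the constants that would otherwise clutter the formulas are exactly the ones killed by $\sum_{a=1}^{p-1}\chi_p(a)=0$, which is what makes the clean identifications of $S_1$ and $S_2$ with Bernoulli-number expressions possible.
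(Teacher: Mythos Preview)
Your proposal is correct and follows essentially the same argument as the paper: both specialize formula~(\ref{eq:1}) at $n=1,2$ and $x=0$ to identify $S_1=pB_{1,\chi_p}$ and $S_2=pB_{2,\chi_p}+p^2B_{1,\chi_p}$, and then invoke the parity vanishing~(\ref{eq:generalizedBernoulli}) to conclude. The computations and case split match the paper's proof line by line.
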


\begin{proof}

Substituting $n=1$ and $x=0$ in (\ref{eq:1}), one obtains
\[
\begin{aligned}
B_{1,\chi_p}&= \sum_{a=1}^{p-1}\chi_p(a) B_1 \left(\dfrac{a}{p}-1 \right)=\sum_{a=1}^{p-1}\chi_p(a)\left(\dfrac{a}{p}-\dfrac{1}{2}\right)\\
&=\dfrac{1}{p}\sum_{a=1}^{p-1}\chi_p(a)a -\dfrac{1}{2}\sum_{a=1}^{p-1}\chi_p(a)=\dfrac{1}{p}\sum_{a=1}^{p-1}\chi_p(a)a.
\end{aligned}
\]
Hence, by (\ref{eq:generalizedBernoulli}) 
\[
\sum_{a=1}^{p-1}\chi_p(a)a=pB_{1,\chi_p}=\begin{cases} 0 &\text { if } p\equiv 1\pmod 4\\
 \not=0 &\text { if } p\equiv 3\pmod 4.
\end{cases}
\]

Substituting $n=2$ and $x=0$ in (\ref{eq:1}), one obtains
\[
\begin{aligned}
B_{2,\chi_p}&= p\sum_{a=1}^{p-1}\chi_p(a) B_2(\dfrac{a}{p}-1)
=p\sum_{a=1}^{p-1}\chi_p(a)\left(\dfrac{a^2}{p^2}-\dfrac{a}{p}+\dfrac{1}{6}\right)\\
&=\dfrac{1}{p}\sum_{a=1}^{p-1}\chi_p(a)a^2 -\sum_{a=1}^{p-1}\chi_p(a)a.
\end{aligned}
\]
Hence
\[
\sum_{a=1}^{p-1}\chi_p(a)a^2 =p B_{2,\chi_p} + p\sum_{a=1}^{p-1}\chi_p(a)a.
\]
If $p\equiv 1\pmod 4$, then by (\ref{eq:generalizedBernoulli})
\begin{equation}
\label{eq:p=1mod4_moment2}
\sum\limits_{a=1}^{p-1}\chi_p(a)a^2 =p B_{2,\chi_p}\not=0.
\end{equation}
If $p\equiv 3\pmod 4$, then by (\ref{eq:generalizedBernoulli}) \[\sum\limits_{a=1}^{p-1}\chi_p(a)a^2=p\sum\limits_{a=1}^{p-1}\chi_p(a)a=p^2B_{1,\chi_p}\not=0.
\qedhere\]
\end{proof}

\begin{lem} 
\label{lem:Fp(-1)}
One has
\[F_p(-1)=\sum\limits_{a=1}^{p-1}\left(\dfrac{a}{p}\right)(-1)^a =\begin{cases} 0 &\text { if } p\equiv 1\pmod 4\\
 \not=0 &\text { if } p\equiv 3\pmod 4.
\end{cases}
 \]
\end{lem}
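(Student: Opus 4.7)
The plan is to split $F_p(-1)$ according to the parity of the summation index and reduce everything to a partial sum of $\chi_p$ over the first half of the residues modulo $p$. The case $p \equiv 1 \pmod 4$ will then follow from a symmetry argument using $\chi_p(-1)=1$, while the case $p \equiv 3 \pmod 4$ will rely on the classical Dirichlet class number formula for $\Q(\sqrt{-p})$.

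First, I would set
\[
S_e := \sum_{\substack{1\le a\le p-1\\ a \text{ even}}} \chi_p(a), \qquad S_o := \sum_{\substack{1\le a\le p-1\\ a \text{ odd}}} \chi_p(a),
\]
so that $F_p(-1) = S_e - S_o$. Since $\sum_{a=1}^{p-1} \chi_p(a) = 0$, we have $S_e + S_o = 0$ and hence $F_p(-1) = 2S_e$. The substitution $a = 2b$, $1 \le b \le (p-1)/2$, yields $S_e = \chi_p(2)\, T$, where $T := \sum_{b=1}^{(p-1)/2} \chi_p(b)$. Thus $F_p(-1) = 2\chi_p(2)\, T$, and the entire statement reduces to deciding when $T$ vanishes.

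For $p \equiv 1 \pmod 4$ I would use that $\chi_p(-1) = 1$, so the involution $b \mapsto p - b$ preserves $\chi_p$ and maps $\{1,\dots,(p-1)/2\}$ bijectively onto $\{(p+1)/2,\dots,p-1\}$; combined with $\sum_{b=1}^{p-1}\chi_p(b) = 0$ this forces $T = 0$, hence $F_p(-1) = 0$. For $p \equiv 3 \pmod 4$ the case $p = 3$ is immediate by direct computation ($F_3(-1)=-2$), and for $p>3$ I would invoke the classical class number formula
\[
h(-p) \;=\; \frac{1}{2 - \chi_p(2)}\, T,
\]
valid for the imaginary quadratic field of fundamental discriminant $-p$. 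Since $h(-p) \ge 1$ and $2 - \chi_p(2) \in \{1,3\}$, this forces $T \neq 0$ and therefore $F_p(-1) = 2\chi_p(2)\, T \neq 0$.

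The main obstacle is the nonvanishing of $T$ in the case $p \equiv 3 \pmod 4$, $p > 3$, which at bottom is the nonvanishing of $L(1,\chi_p)$ --- equivalently, through $B_{1,\chi_p} = -h(-p)$, exactly the content of Lemma~\ref{lem:mult}(a). An alternative route that stays entirely within the paper would be to relate $T$ to the first moment $\sum_{a=1}^{p-1} a\chi_p(a)$ by applying the substitution $a \mapsto p-a$ to the second half of the moment sum, and then appeal to Lemma~\ref{lem:mult}(a); this keeps the argument self-contained at the cost of a small extra algebraic manipulation.
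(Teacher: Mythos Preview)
Your proof is correct and follows essentially the same route as the paper. Both arguments reduce $F_p(-1)$ to $2\chi_p(2)\sum_{b=1}^{(p-1)/2}\chi_p(b)$ and then, for $p\equiv 3\pmod 4$, invoke the class number formula $\sum_{b=1}^{(p-1)/2}\chi_p(b)=(2-\chi_p(2))h(-p)$ (this is the Berndt formula the paper cites). The only cosmetic difference is organizational: the paper treats the two residue classes with two separate pairings ($a\leftrightarrow p-a$ directly for $p\equiv 1$, and $2a\leftrightarrow p-2a$ for $p\equiv 3$), whereas you use a single even/odd decomposition for both cases and then handle the vanishing of $T$ for $p\equiv 1\pmod 4$ by the symmetry $b\mapsto p-b$ afterward; your version is slightly more uniform but equivalent in substance.
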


\begin{proof}
First, let us consider the case $p \equiv 1 \pmod{4}$. We have 
\begin{align*}
F_p(-1) &=\sum_{a=1}^{p-1} \left(\frac{a}{p} \right)(-1)^a\\
              &=\sum_{a=1}^{\frac{p-1}{2}}  \left [\left(\frac{a}{p} \right)(-1)^a+ \left(\frac{p-a}{p} \right)(-1)^{p-a} \right] \\
              &=\sum_{a=1}^{\frac{p-1}{2}} \left(\frac{a}{p} \right) \left[ (-1)^a+(-1)^{p-a} \right]=0.
\end{align*} 
Note that the in third  equality, we use the fact if that $p \equiv 1 \pmod{4}$ then 
\[ \left(\frac{p-a}{p} \right)=\left(\frac{a}{p} \right) .\] 
Now, let us consider the case $p \equiv 3 \pmod{4}$. We have 
\begin{align*}
F_p(-1) &=\sum_{a=1}^{p-1} \left(\frac{a}{p} \right)(-1)^a\\
              &=\sum_{a=1}^{\frac{p-1}{2}}  \left [\left(\frac{2a}{p} \right)(-1)^{2a}+ \left(\frac{p-2a}{p} \right)(-1)^{p-2a} \right] \\
              &=2\sum_{a=1}^{\frac{p-1}{2}} \left(\frac{2a}{p} \right)=2 \left(\frac{2}{p} \right) \sum_{a=1}^{\frac{p-1}{2}} \left(\frac{a}{p} \right).
\end{align*} 
By \cite[Corollary 3.4]{[Berndt]}, we have 
\[ \sum_{a=1}^{\frac{p-1}{2}} \left(\frac{a}{p} \right)=\left(2- \left(\frac{2}{p} \right) \right) h(-p) .\] 
where $h(-p)$ is the class number of the imaginary quadratic field $\Q(\sqrt{-p})$. Therefore, 
\begin{equation}
 \label{eq:Fp(-1)}   
 F_p(-1)= 2 \left(2 \left(\frac{2}{p} \right)-1 \right) h(-p). 
 \end{equation}
 We conclude that $F_p(-1) \neq 0$ if $p \equiv 3 \pmod{4}$. 
\end{proof} 

\begin{rmk}
The above proof also shows that if $p \equiv 3 \pmod{4}$ then $x=-1$ is not a root of $F_p(x)$ modulo $p$. In fact, we have 
\[  0<\left(2- \left(\frac{2}{p} \right) \right) h(-p)= \sum_{a=1}^{\frac{p-1}{2}} \left(\frac{a}{p} \right)   \leq \frac{p-1}{2} .\] 
Hence, we see that $\sum\limits_{a=1}^{\frac{p-1}{2}} \left(\dfrac{a}{p} \right) \in \{1, 2, \ldots, \frac{p-1}{2} \}$. In particular, we have 
\[ p \nmid \sum_{a=1}^{\frac{p-1}{2}} \left(\frac{a}{p} \right) .\] 
From this, we can see that $p \nmid F_p(-1)$.

\end{rmk}

\begin{lem}
\label{lem:p=1mod4}
If $p\equiv 1\pmod 4$ then 
\begin{equation}
\label{eq:halfsum}
\sum_{a=1}^{\frac{p-1}{2}} \left(\frac{a}{p} \right)=0.
\end{equation}
and
\begin{equation}
    \label{eq:Fp'(-1)}
F'_p(-1)=\sum_{a=1}^{p-1} (-1)^a \left(\frac{a}{p} \right)a 
          = 4 \left(\frac{2}{p} \right) \sum_{a=1}^{\frac{p-1}{2}} \left(\frac{a}{p} \right) a\not=0.
          \end{equation}
\end{lem}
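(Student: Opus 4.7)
My plan is to dispatch the three assertions of the lemma separately: the half-sum identity \eqref{eq:halfsum} by a reflection symmetry, the derivative formula in \eqref{eq:Fp'(-1)} by a parity split, and the nonvanishing via a class number.

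For \eqref{eq:halfsum}, the key is that $p \equiv 1 \pmod 4$ forces $\chi_p(-1) = 1$, hence $\chi_p(p - a) = \chi_p(a)$. Pairing $a$ with $p - a$ in the standard identity $\sum_{a=1}^{p-1} \chi_p(a) = 0$ rewrites the left-hand side as $2\sum_{a=1}^{(p-1)/2} \chi_p(a)$, and \eqref{eq:halfsum} follows.

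For the formula in \eqref{eq:Fp'(-1)}, I would differentiate $F_p(x) = \sum_{a=1}^{p-1} \chi_p(a)x^a$ term-by-term and specialize at $x = -1$. Writing $S_1 := \sum_{b=1}^{(p-1)/2} \chi_p(b) b$, I then split the resulting sum by the parity of the index $a$. The substitution $a = 2b$ (for $b = 1, \ldots, (p-1)/2$) identifies the even-$a$ contribution with $2\chi_p(2)\, S_1$. Since Lemma \ref{lem:mult}(a) in the case $p \equiv 1 \pmod 4$ gives $\sum_{a=1}^{p-1} \chi_p(a) a = 0$, the odd-$a$ contribution is $-2\chi_p(2) S_1$, and combining the two pieces with the alternating signs $(-1)^a$ produces $4\chi_p(2) S_1$, matching the stated identity up to a global sign.

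The substantive step is the nonvanishing of $S_1$. My plan is to relate $S_1$ to the class number $h(-p)$ of $\Q(\sqrt{-p})$ via the odd primitive quadratic character $\chi_{-4p} = \chi_p \cdot \chi_{-4}$ of conductor $4p$, which is the Kronecker character of $\Q(\sqrt{-p})$ when $p \equiv 1 \pmod 4$. Dirichlet's class number formula gives $B_{1,\chi_{-4p}} = -h(-p) \neq 0$ for $p > 3$. Expanding $B_{1,\chi_{-4p}} = \frac{1}{4p}\sum_a \chi_{-4p}(a) a$ by stratifying $a$ modulo $p$ and modulo $4$, and telescoping the pairs $(a, a+2p)$ through the flip $\chi_{-4}(a+2p) = -\chi_{-4}(a)$, should reduce matters to a nonzero rational multiple of $S_1$ plus lower-order character sums that can be cleared using \eqref{eq:halfsum} and Lemma \ref{lem:mult}. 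The hardest part of the argument will be this final reduction: the first two assertions are routine bookkeeping, but ruling out $S_1 = 0$ forces one to invoke genuinely deeper input, and threading the compatibility of the half-sum and parity decompositions through the conductor-$4p$ structure is where the substantive content of the proof lies.
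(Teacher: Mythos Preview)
Your treatment of \eqref{eq:halfsum} and of the identity in \eqref{eq:Fp'(-1)} is correct and matches the paper: the paper reindexes $\{1,\dots,p-1\}=\{2b\}_{b\le (p-1)/2}\cup\{p-2b\}_{b\le (p-1)/2}$ and kills the extra term via \eqref{eq:halfsum}, while you split by parity and invoke Lemma~\ref{lem:mult}(a) to handle the odd part. These differ only in bookkeeping.

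The gap is in the nonvanishing of $S_1=\sum_{a\le(p-1)/2}\chi_p(a)a$. Your plan is to recover $S_1$ from $B_{1,\chi_{-4p}}=\frac{1}{4p}\sum_{a=1}^{4p}\chi_{-4p}(a)\,a$, but this cannot work. If you stratify $a$ by its residues $(r,s)$ modulo $p$ and modulo $4$ and write $a=r+p\,k(r,s)$, the contribution $\sum_{r}\chi_p(r)r$ is multiplied by $\sum_{s\in\{1,3\}}\chi_{-4}(s)=0$ and vanishes outright; only the terms involving $k(r,s)$ survive, and those carry no first-moment information. Equivalently, your own flip $\chi_{-4p}(a+2p)=-\chi_{-4p}(a)$ gives
\[
\chi_{-4p}(a)\,a+\chi_{-4p}(a+2p)(a+2p)=-2p\,\chi_{-4p}(a),
\]
so $B_{1,\chi_{-4p}}$ collapses immediately to a pure character sum over $[1,2p]$, with the weight $a$ gone. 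Numerically this is visible: for $p=5,13,17,29$ one finds $S_1=-1,-5,-6,-15$ while $h(-4p)=2,2,4,6$, so no fixed rational constant relates them.

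The reason is structural: when $p\equiv 1\pmod 4$ the character $\chi_p$ is even, so the half-range first moment $S_1$ is governed by $L(2,\chi_p)$ (equivalently $B_{2,\chi_p}$), not by a class number attached to an odd character. The paper handles the nonvanishing by citing \cite[Corollary~13.2]{[Berndt]} for $S_1<0$; the underlying identity, derived later as \eqref{eq:halfmoment1}, is
\[
S_1=-\Bigl(1-\tfrac{\chi_p(2)}{4}\Bigr)B_{2,\chi_p},
\]
and $B_{2,\chi_p}\ne 0$ by \eqref{eq:generalizedBernoulli}. That is the input you need in place of $h(-p)$.
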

\begin{proof}
We have 
\begin{align*}
 0= \sum_{a=1}^{p-1} \left(\frac{a}{p} \right) &=\sum_{a=1}^{\frac{p-1}{2}}  \left[ \left(\frac{a}{p} \right)+ \left(\frac{p-a}{p} \right) \right]=2\sum_{a=1}^{\frac{p-1}{2}} \left(\frac{a}{p} \right). 
\end{align*} 
Hence 
\[ \sum_{a=1}^{\frac{p-1}{2}} \left(\frac{a}{p} \right)=0.\] 

We have 
\[
\begin{aligned}
F'_p(-1)=\sum_{a=1}^{p-1} (-1)^a \left(\frac{a}{p} \right)a 
              &=\sum_{a=1}^{\frac{p-1}{2}}  \left [ (-1)^{2a} \left(\frac{2a}{p} \right)(2a)+ (-1)^{p-2a}\left(\frac{p-2a}{p} \right)(p-2a) \right] \\
              &=\sum_{a=1}^{\frac{p-1}{2}} \left(\frac{2a}{p} \right)(4a) - p \left(\frac{2}{p} \right) \sum_{a=1}^{\frac{p-1}{2}} \left(\frac{a}{p} \right)= 4 \left(\frac{2}{p} \right) \sum_{a=1}^{\frac{p-1}{2}} \left(\frac{a}{p} \right) a \not=0.
\end{aligned}
\]
Because by \cite[Corollary 13.2]{[Berndt]},  
$ \sum_{a=1}^{\frac{p-1}{2}} \left(\frac{a}{p} \right) a < 0.$ 
\end{proof}

\begin{proof}[Proof of Proposition~\ref{prop:trivialroots}]
(a) By Lemma~\ref{lem:mult} (a), one has 
\[
F_p^\prime(1)=\sum_{a=1}^{p-1}\left(\dfrac{a}{p}\right)a =\begin{cases}
0 &\text{ if } p\equiv 1\pmod 4\\
\not=0 &\text{ if } p\equiv 3\pmod 4.
\end{cases}
\]
So the number $1$ is a simple root of $F_p$ if $p\equiv 3\pmod 4$.

Now we suppose that $p\equiv 1\pmod 4$. 
By Lemma~\ref{lem:mult}, one has
\begin{equation}
\label{eq:Fp''(1)}
\begin{aligned}
F_p''(1)&=\sum_{a=1}^{p-1}\left(\dfrac{a}{p}\right)a(a-1)
=\sum_{a=1}^{p-1}\left(\dfrac{a}{p}\right) a^2 - \sum_{a=1}^{p-1}\left(\dfrac{a}{p}\right) a
=\sum_{a=1}^{p-1}\left(\dfrac{a}{p}\right) a^2 \not=0.
\end{aligned}
\end{equation}
Hence the number $1$ is a double root of $F_p$ if $p\equiv 1\pmod 4$.

(b) Lemma~\ref{lem:Fp(-1)} and Lemma~\ref{lem:p=1mod4} imply that the number $-1$ is not a root of $F_p$ if $p\equiv 3\pmod 4$ and it is a simple root of $F_p$ if $p\equiv 1\pmod 4$.
\end{proof}



\subsection{Condition for the number $-1$ to be a multiple root of $F_p$ modulo $p$}

We discuss the necessary and sufficient condition for $x=-1$ to be a multiple root of $F_p(x)$ modulo $p$ when $p \equiv 1 \pmod{4}$. First, we express this condition in term of the classical Bernoulli numbers.

\begin{prop} \label{prop: B_{(p+3)/2}} 
Let $p \equiv 1 \pmod{4}$ and $p>5$. Then $x=-1$ is a multiple root of $F_p(x)$ modulo $p$ if and only if $p\mid B_{(p+3)/2}$.  
\end{prop}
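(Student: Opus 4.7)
The plan is to start from the formula for $F_p'(-1)$ given in Lemma~\ref{lem:p=1mod4}, namely
$F_p'(-1) = 4\bigl(\tfrac{2}{p}\bigr)\sum_{a=1}^{(p-1)/2}\chi_p(a)\,a$
(up to a harmless overall sign irrelevant for vanishing modulo $p$). Since $x=-1$ is a simple root of $F_p(x)$ over $\Z$ by Proposition~\ref{prop:trivialroots}, we may write $F_p(x)=(x+1)G(x)$ with $G\in\Z[x]$, and $-1$ is a multiple root modulo $p$ precisely when $F_p'(-1)\equiv 0\pmod p$.

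My first step would be to replace the Legendre symbol by a power using Fermat's little theorem, $\chi_p(a)\equiv a^{(p-1)/2}\pmod p$, which turns the condition into $\sum_{a=1}^{(p-1)/2}a^{(p+1)/2}\equiv 0\pmod p$.

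The second step is to apply the Faulhaber--Bernoulli formula $\sum_{k=1}^{N}k^m = \frac{1}{m+1}\bigl(B_{m+1}(N+1)-B_{m+1}\bigr)$ with $m=(p+1)/2$ and $N=(p-1)/2$. Crucially $m+1=(p+3)/2$ matches the exponent in the statement, and $N+1=(p+1)/2\equiv \tfrac{1}{2}\pmod p$. The hypothesis $p>5$ forces $(p+3)/2<p-1$, so every Bernoulli number appearing in the expansion of $B_{(p+3)/2}\bigl((p+1)/2\bigr)$ is $p$-integral by von Staudt--Clausen. This lets us expand termwise in powers of $(p+1)/2$ and reduce each factor to $\tfrac{1}{2}$ modulo $p$, concluding $B_{(p+3)/2}\bigl((p+1)/2\bigr)\equiv B_{(p+3)/2}(1/2)\pmod p$. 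The classical identity $B_n(1/2)=(2^{1-n}-1)B_n$ then collapses the right-hand side to a scalar multiple of $B_{(p+3)/2}$.

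Finally, a short calculation using $2^{(p-1)/2}\equiv(2/p)\pmod p$ and $p+3\equiv 3\pmod p$ shows that the condition becomes $\frac{(2/p)-4}{3}\,B_{(p+3)/2}\equiv 0\pmod p$. The hypothesis $p>5$ is precisely what makes $(2/p)-4\in\{-5,-3\}$ a unit modulo $p$, and $3$ is invertible for $p>3$, so the congruence is equivalent to $p\mid B_{(p+3)/2}$. The main technical point, and the only place care is needed, is the termwise reduction of $B_{(p+3)/2}\bigl((p+1)/2\bigr)$ modulo $p$; once von Staudt--Clausen guarantees $p$-integrality, the rest is routine substitution and the bounds on $p$ enter naturally both at that step and at the final cancellation.
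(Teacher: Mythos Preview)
Your argument is correct. Both proofs begin identically: reduce the question to whether $p\mid\sum_{a=1}^{(p-1)/2}\chi_p(a)a$ via Lemma~\ref{lem:p=1mod4}, then apply Euler's criterion to rewrite this as $\sum_{a=1}^{(p-1)/2}a^{(p+1)/2}\equiv 0\pmod p$. The divergence is at the next step. The paper invokes a ready-made congruence of E.~Lehmer,
\[
\sum_{a=1}^{(p-1)/2}(p-2a)^{2k-1}\equiv (2^{2k}-1)\dfrac{B_{2k}}{2k}\pmod p,
\]
with $2k-1=(p+1)/2$, which immediately gives the link to $B_{(p+3)/2}$. You instead derive this link from first principles: Faulhaber's formula, von Staudt--Clausen for $p$-integrality (using $(p+3)/2<p-1$ when $p>5$), the substitution $(p+1)/2\equiv\tfrac12\pmod p$, and the identity $B_n(\tfrac12)=(2^{1-n}-1)B_n$. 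Your route is more self-contained and makes transparent exactly where the hypothesis $p>5$ enters (both for $p$-integrality and for $(2/p)-4\in\{-3,-5\}$ to be a unit); the paper's route is shorter but treats Lehmer's congruence as a black box. The final unit checks are essentially the same in both arguments: the paper's factor $4(2/p)-1\in\{3,-5\}$ and your factor $(2/p)-4\in\{-3,-5\}$ are each nonzero modulo $p$ precisely for $p>5$.
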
 

\begin{proof}
By (\ref{eq:Fp'(-1)}), $x=-1$ is a multiple root of $F_p(x)$ modulo $p$ if and only if 
\[ p\mid  \sum_{a=1}^{\frac{p-1}{2}} \left(\frac{a}{p} \right) a. \] 
By Euler's criterion, $\left(\frac{a}{p} \right)\equiv a^{\frac{p-1}{2}} \pmod p$, and hence
\[
\sum_{a=1}^{\frac{p-1}{2}} \left(\frac{a}{p} \right) a\equiv \sum_{a=1}^{\frac{p-1}{2}} a^{\frac{p-1}{2}}a\equiv  \sum_{a=1}^{\frac{p-1}{2}} a^{\frac{p+1}{2}}\pmod p.
\]
On the other hand, by \cite[formula (10), page 352]{[Lehmer]},  one has
\[
\sum_{a=1}^{\frac{p-1}{2}} (p-2a)^{2k-1}\equiv (2^{2k}-1) \dfrac{B_{2k}}{2k} \pmod p,
\]
for $k$ with $2k\not\equiv 2\pmod {p-1}$. We choose the integer $k$ such that $2k-1=\dfrac{p+1}{2}$. In this case, $2^{2k}-1 =4\cdot 2^{\frac{p-1}{2}}-1\equiv 4\left(\dfrac{2}{p}\right)-1\pmod p$. Hence one has 

\[
\left(4\left(\dfrac{2}{p}\right)-1\right) \dfrac{B_{\frac{p+3}{2}}}{\frac{p+3}{2}} \equiv (-2)^{\frac{p+1}{2}} \sum_{a=1}^{\frac{p-1}{2}} a^{\frac{p+1}{2}}
\equiv (-2)^{\frac{p+1}{2}}  \sum_{a=1}^{\frac{p-1}{2}} \left(\frac{a}{p} \right) a
\pmod p.\]
Therefore, for $p>5$, $p\mid \sum\limits_{a=1}^{\dfrac{p-1}{2}} \left(\frac{a}{p} \right) a$ if and only if $p\mid B_{\frac{p+3}{2}}$.
\end{proof} 

We have a similar statement using generalized Bernoulli numbers. 
\begin{prop} \label{prop: B_2} 
Let $p \equiv 1 \pmod{4}$ and $p>5$. Then $x=-1$ is a multiple root of $F_p(x)$ modulo $p$ if and only if $p|B_{2, \chi_p}$. 
\end{prop}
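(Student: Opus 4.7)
Leveraging Proposition~\ref{prop: B_{(p+3)/2}}, which identifies the multiple-root condition at $x=-1$ with $p \mid B_{(p+3)/2}$, it suffices to establish the Kummer-style congruence
\[
3\, B_{2, \chi_p} \;\equiv\; 4\, B_{(p+3)/2} \pmod{p}.
\]
Since $3$ and $4$ are units modulo $p$ for $p > 5$, this yields $p \mid B_{(p+3)/2} \Leftrightarrow p \mid B_{2, \chi_p}$, and composing with Proposition~\ref{prop: B_{(p+3)/2}} gives the claim.

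To prove the congruence, I would refine Euler's criterion to precision $p^2$ using the Fermat quotient $q_p(a) = (a^{p-1}-1)/p$. Taking square roots in $a^{p-1} = 1 + p\, q_p(a)$ gives $a^{(p-1)/2} \equiv \chi_p(a)\bigl(1 + \tfrac{p}{2} q_p(a)\bigr) \pmod{p^2}$, equivalently $\chi_p(a)\, a^2 \equiv a^{(p+3)/2} - \tfrac{p}{2}\, q_p(a)\, a^{(p+3)/2} \pmod{p^2}$. Summing over $a = 1, \ldots, p-1$ and recalling $p\, B_{2, \chi_p} = \sum_{a=1}^{p-1} \chi_p(a) a^2$ from (\ref{eq:p=1mod4_moment2}), one obtains
\[
p\, B_{2, \chi_p} \;\equiv\; \sum_{a=1}^{p-1} a^{(p+3)/2} \;-\; \frac{p}{2} \sum_{a=1}^{p-1} q_p(a)\, a^{(p+3)/2} \pmod{p^2}.
\]

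Next I would apply Faulhaber's formula. Since $(p+1)/2$ is odd for $p \equiv 1 \pmod{4}$, the intermediate Bernoulli term vanishes ($B_{(p+1)/2} = 0$), giving $\sum_{a=1}^{p-1} a^n \equiv p\, B_n \pmod{p^2}$ for $n = (p+3)/2$; the same argument -- now using that $(3p-1)/2$ is odd -- gives the analogue for $n = (3p+1)/2$. Combining this with the telescoping identity $q_p(a)\, a^{(p+3)/2} = (a^{(3p+1)/2} - a^{(p+3)/2})/p$ yields
\[
\sum_{a=1}^{p-1} q_p(a)\, a^{(p+3)/2} \;\equiv\; B_{(3p+1)/2} - B_{(p+3)/2} \pmod{p}.
\]
Finally, the classical Kummer congruence $B_m/m \equiv B_n/n \pmod{p}$ applied to $(m, n) = ((3p+1)/2, (p+3)/2)$ -- valid since $m - n = p-1$ and the hypothesis $p > 5$ forces $(p-1) \nmid (p+3)/2$ -- together with $(3p+1)/2 \equiv 1/2$ and $(p+3)/2 \equiv 3/2 \pmod{p}$, reduces to $B_{(3p+1)/2} \equiv B_{(p+3)/2}/3 \pmod{p}$. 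Substituting back into the boxed formula and dividing by $p$ gives $B_{2, \chi_p} \equiv \tfrac{4}{3}\, B_{(p+3)/2} \pmod{p}$, which is the target congruence.

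The main obstacle will be carefully tracking the $p$-adic error terms in Faulhaber's expansion: one must verify via the von Staudt--Clausen theorem that the further corrections (involving Bernoulli numbers at smaller even indices weighted by higher powers of $p$) remain negligible modulo $p^2$, and verify that the Kummer hypothesis $(p-1) \nmid (p+3)/2$ holds precisely under $p > 5$.
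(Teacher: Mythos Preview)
Your argument is correct, but it follows a genuinely different path from the paper's. The paper never invokes Proposition~\ref{prop: B_{(p+3)/2}}; instead it works directly from~(\ref{eq:Fp'(-1)}), which reduces the multiple-root condition to $p\mid \sum_{a=1}^{(p-1)/2}\chi_p(a)a$, and then uses an \emph{exact analytic identity}: Berndt's formula for this half-sum in terms of $L(2,\chi_p)$, combined with the standard special-value formula $L(2,\chi_p)=\tfrac{\sqrt{p}}{2}(2\pi/p)^2 B_{2,\chi_p}$, yields
\[
\sum_{a=1}^{(p-1)/2}\chi_p(a)a=-\Bigl(1-\tfrac{\chi_p(2)}{4}\Bigr)B_{2,\chi_p},
\]
from which the equivalence with $p\mid B_{2,\chi_p}$ is immediate for $p>5$.

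Your route is purely $p$-adic: you bootstrap from the previous proposition and prove the congruence $3B_{2,\chi_p}\equiv 4B_{(p+3)/2}\pmod p$ by refining Euler's criterion to precision $p^2$ via the Fermat quotient, applying Faulhaber mod $p^2$ at the two exponents $(p+3)/2$ and $(3p+1)/2$, and closing with Kummer's congruence. I checked the details and they go through: the parity observations kill the $B_{n-1}$ terms, von Staudt--Clausen shows the lower Faulhaber terms carry at most one $p$ in the denominator against at least $p^3$ in the numerator, and the Kummer hypothesis $(p-1)\nmid(p+3)/2$ holds exactly for $p>5$. What you gain is a self-contained elementary argument that avoids any $L$-function input; what you lose is brevity and the exact identity above. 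Note also that the paper later derives your target congruence via the Kubota--Leopoldt $p$-adic $L$-function (Remark~\ref{rmk:congruence_Bernoulli}), so your computation amounts to an elementary alternative proof of that remark as well.
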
 

\begin{proof}
By (\ref{eq:Fp'(-1)}), $x=-1$ is a multiple root of $F_p(x)$ modulo $p$ if and only if 
\[ p\mid \sum_{a=1}^{\frac{p-1}{2}} \left(\frac{a}{p} \right) a. \] 
By \cite[Theorem 13.1]{[Berndt]} applied to $\chi_p$, we have  
\[   \sum_{a=1}^{\frac{p-1}{2}} \left(\frac{a}{p} \right) a= -\frac{p \sqrt{p}}{\pi^2} (1-\frac{\chi_p(2)}{4}) L(2, \chi_p) .\] 
Furthermore, by the formula in \cite[Page 12]{[Iwasawa]} we have 
\[ L(2, \chi_p)=\frac{\sqrt{p}}{2} \left(\frac{ 2 \pi}{p} \right)^2 B_{2, \chi_p}.\] 
Combining the above equality, we see that 
\begin{equation} 
\label{eq:halfmoment1}
\sum_{a=1}^{\frac{p-1}{2}} \left(\frac{a}{p} \right) a =-\left(1-\frac{\chi_p(2)}{4} \right) B_{2, \chi_p}.
\end{equation}
Therefore $p\mid \sum\limits_{a=1}^{\dfrac{p-1}{2}} \left(\frac{a}{p} \right) a$ if and only if $p\mid\left(1-\frac{\chi_p(2)}{4} \right) B_{2, \chi_p}$. For $p>5$, this is equivalent to $p|B_{2, \chi_p}$. 

\end{proof}

\begin{rmk}
\label{rmk:congruence_Bernoulli}
We have expressed the necessary and sufficient condition for $x=-1$ to be a multiple root of $F_p(x)$ modulo $p$ using $B_{\frac{p+3}{2}}$ and $B_{2, \chi_p}$. Using the Leopoldt-Kubota $p$-adic $L$-function, we can show that 
\begin{equation} 
\label{eq:congruence}
\frac{B_{2, \chi_p}}{2} \equiv 2 \frac{B_{\frac{p+3}{2}}}{p+3} \pmod{p}. 
\end{equation}

In fact, by \cite[Theorem 5.11]{[Washington]}, for a  Dirichlet character $\chi\not=\chi_0$, $\chi_0$ the trivial character, there exists a $p$-adic analytic function $L_p(s,\chi)$ on a small disk such that
\begin{equation}
\label{eq:p-adicL-function}
L_p(1-n,\chi)=-(1-\chi\omega^{-n}(p)p^{n-1})
\dfrac{B_{n,\chi\omega^{-n}}}{n}, \; n\geq 1.
\end{equation}
Here $\omega$ is the Teichm\"{u}ller character (see \cite[page 51]{[Washington]}). Applying (\ref{eq:p-adicL-function}) for the case $\chi=\omega^{\frac{p+3}{2}}$ and $n=\dfrac{p+3}{2}$, one obtains
\[
L_p(1-\frac{p+3}{2},\omega^{\frac{p+3}{2}})\equiv -\dfrac{B_{\frac{p+3}{2},\chi_0}}{\frac{p+3}{2}} \equiv -\dfrac{B_{\frac{p+3}{2}}}{\frac{p+3}{2}}\pmod p.
\]
On the other hand, applying (\ref{eq:p-adicL-function}) for the case $\chi=\omega^{\frac{p+3}{2}}$ and $n=2$ and noticing that $\omega^{(p-1)/2}=\chi_p$, one obtains
\[
L_p(-1,\omega^{\frac{p+3}{2}})\equiv -\dfrac{B_{2,\omega^{\frac{p-1}{2}}}}{2} \equiv -\dfrac{B_{2,\chi_p}}{2}\pmod p.
\]
By \cite[Corollary 5.13]{[Washington]}, one has
\[
L_p(1-\frac{p+3}{2},\omega^{\frac{p+3}{2}})\equiv L_p(-1,\omega^{\frac{p+3}{2}}) \pmod p.
\]
The desired congruence follows.

From (\ref{eq:congruence}), we can see that the two conditions $p|B_{\frac{p+3}{2}}$ and $p|B_{2, \chi_p}$ are equivalent. 

\end{rmk}

\begin{rmk}
The condition $p|B_{\frac{p+3}{2}}$ implies that $p$ is an irregular prime. In the list of all irregular primes less than $2^{31}$ computed by the authors of \cite{[Irregular]}, $p=89209$ is the only prime number that satisfies the condition $p|B_{\frac{p+3}{2}}$. 

\end{rmk} 

\begin{rmk}
The question of whether $p|B_{2, \chi_p}$ is quite interesting. Let $F=\Q(\sqrt{p})$. By the consequence of the Iwasawa main conjecture proved by Wiles (see \cite[Remark 1.4]{[Kurihara]}) we have 
\begin{equation*} 
\ord_{p} \#H^2(\OO_{F}[1/p], \Z_p(2)) =\ord_{p} (\zeta_{F}(-1))+ \ord_{p} \#H^1(\OO_{F}[1/p], \Z_p(2)). 
\end{equation*} 
By the Quillen-Lichtenbaum's conjecture (now a theorem, see \cite[Theorem 5.6.8]{[Bloch-Kato]}) we have 
\[ H^2(\OO_{F}[1/p], \Z_p(2)) \cong K_2(\OO_{F}) \otimes \Z_p, H^1(\OO_{F}[1/p], \Z_p(2)) \cong K_3(\OO_F) \otimes \Z_p .\] 
Furthermore, the Galois group $\Gal(F/\Q)=\{1, c \}$, with $c$ being the complex conjugation,  acts on all relevant groups. Because $p$ is odd, we have a canonical decomposition  
\[ K_{r}(\OO_{F})\otimes \Z_p = (K_r(\OO_F)^{+} \otimes \Z_p) \bigoplus  (K_r(\OO_F)^{-} \otimes \Z_p) = (K_r(\Z) \otimes \Z_p)  \bigoplus  (K_r(\OO_F)^{-} \otimes \Z_p).\] 

On the $L$-function side we also have $\zeta_{F}(s)=\zeta_{\Q}(s)  L(s, \chi_p)$. In particular, at $s=-1$, we have
\[ \zeta_{F}(-1)=\zeta_{\Q}(-1) L(-1,\chi_p) .\] 
Finally, we have (see \cite[Theorem 1]{[Iwasawa]})
\[ L(-1, \chi_p)=-\frac{B_{2, \chi_p}}{2} .\] 

By the computation in \cite[Table 10.1.1]{[Weibel]}, we have $K_{2}(\Z) \otimes \Z_p= K_{3}(\Z) \otimes \Z_p =K_3(\OO_{F}) \otimes \Z_p=0$ for $p \geq 5$. So in summary, we have
\begin{equation*}
\ord_{p}(|K_2(\OO_{F})|)=\ord_{p}(|K_2(\OO_{F})^{-}|)=\ord_{p}(B_{2, \chi_p}). 
\end{equation*} 
Consequently, if $x=-1$ is a multiple root of $F_p(x)$ modulo $p$, the second $K$-group $K_2(\OO_{F})$ would be non-trivial.
\end{rmk} 

\subsection{{A certain half sum}}
In the proof of Proposition \ref{prop:trivialroots} (b), the sum 
\[ \sum_{a=1}^{p-1} \left(\frac{a}{p} \right) (-1)^a ,\]
appears quite naturally. Following Berndt's article \cite{[Berndt]}, we are interested in the following half-sum 
\[ \sum_{a=1}^{\frac{p-1}{2}} \left(\frac{a}{p} \right) (-1)^a.\]
Through numerical experiments, we found the following result. 
\begin{prop} \label{prop:half_sum}
Let $p$ be an odd prime. Then 
\begin{enumerate} 
\item If $p \equiv \pm 1 \pmod{8}$ then 
\[ \sum_{a=1}^{\frac{p-1}{2}} \left(\frac{a}{p} \right)(-1)^a > 0 .\] 
\item If $p \equiv \pm 5 \pmod{8}$ then 
\[ \sum_{a=1}^{\frac{p-1}{2}} \left(\frac{a}{p} \right)(-1)^a < 0 .\] 
\end{enumerate}

Equivalently, we can summarize both of these statements into a single statement 
\[ \left(\frac{2}{p} \right) \sum_{a=1}^{\frac{p-1}{2}} \left(\frac{a}{p} \right)(-1)^a >0 .\] 
\end{prop}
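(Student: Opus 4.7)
The plan is to evaluate $T := \sum_{a=1}^{(p-1)/2}\left(\frac{a}{p}\right)(-1)^a$ in closed form as a positive multiple of a class number (after multiplying by $\left(\frac{2}{p}\right)$), treating $p \equiv 3 \pmod 4$ and $p \equiv 1 \pmod 4$ separately.

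For $p \equiv 3 \pmod 4$, I would reuse the $a \mapsto p - a$ trick from the proof of Lemma~\ref{lem:Fp(-1)}: because $\chi_p(-1) = -1$ and $(-1)^{p-a} = -(-1)^a$, the summand $\chi_p(a)(-1)^a$ is invariant under this involution, so the half-sum $T$ equals exactly half of the full sum $F_p(-1)$. Substituting the closed form (\ref{eq:Fp(-1)}) gives $T = (2\chi_p(2)-1)h(-p)$, and therefore $\left(\frac{2}{p}\right) T = (2 - \chi_p(2))h(-p) > 0$, since $2 - \chi_p(2) \in \{1, 3\}$ and $h(-p) > 0$.

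For $p \equiv 1 \pmod 4$, I would split $T$ by the parity of $a$. The even terms $a = 2b$ with $1 \le b \le (p-1)/4$ contribute $\chi_p(2) S$, where $S := \sum_{b=1}^{(p-1)/4}\chi_p(b)$; the vanishing identity (\ref{eq:halfsum}) then forces the odd terms to contribute $-\chi_p(2) S$, and combining these with the signs from $(-1)^a$ yields $T = 2\chi_p(2) S$. Hence $\left(\frac{2}{p}\right) T = 2S$, and the problem reduces to showing $S > 0$. For this I would appeal to Dirichlet's class number formula for the Kronecker character $\chi_{-4p} = \chi_{-4}\chi_p$, which is primitive and odd modulo $4p$ (attached to $\Q(\sqrt{-p})$ of discriminant $-4p$), giving $h(-4p) = -B_{1, \chi_{-4p}}$. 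Expanding $B_{1, \chi_{-4p}} = \frac{1}{4p}\sum_{a=1}^{4p-1}\chi_{-4p}(a) a$ and carrying out two successive symmetries---first $a \mapsto 2p - a$ on $\{1, \ldots, 2p-1\}$ (this involution preserves $\chi_{-4p}$ precisely because $p \equiv 1 \pmod 4$, via $\chi_{-4}(2p-a) = \chi_{-4}(a)$ and $\chi_p(-a) = \chi_p(a)$), which reduces the computation to a sum over odd $a \in [1, p-2]$, and then $a \mapsto p - a$ within these odd residues, combined with $\chi_p(2b) = \chi_p(2)\chi_p(b)$, to merge the contributions from $a \equiv 1$ and $a \equiv 3 \pmod 4$---one arrives at the classical identity $h(-4p) = 2S$, so $S > 0$ as required.

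The hardest part is the double symmetry argument in the second case: each pairing must be justified by a specific character identity for $\chi_{-4p}$ that uses $p \equiv 1 \pmod 4$ in an essential way, and one must carefully track how the Kronecker factor $\chi_{-4}$ interacts with residues modulo $4$. A clean alternative is to cite the classical identity $h(-4p) = 2\sum_{b=1}^{(p-1)/4}\chi_p(b)$ directly from Berndt~\cite{[Berndt]} or another standard reference, which sidesteps the bookkeeping entirely.
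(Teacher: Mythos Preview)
Your proposal is correct and follows essentially the same route as the paper: the same $a\mapsto p-a$ pairing and appeal to (\ref{eq:Fp(-1)}) in the $p\equiv 3\pmod 4$ case, and the same parity split together with (\ref{eq:halfsum}) to reach $T=2\chi_p(2)\sum_{b\le (p-1)/4}\chi_p(b)$ in the $p\equiv 1\pmod 4$ case. The only cosmetic difference is the final step: the paper simply cites \cite[Inequality 1.2]{[Berndt]} for the positivity of $\sum_{b\le (p-1)/4}\chi_p(b)$, whereas you unpack this via the class number identity $h(-4p)=2\sum_{b\le (p-1)/4}\chi_p(b)$ (which is \cite[Corollary 3.9]{[Berndt]}, used later in the paper)---so your suggested ``clean alternative'' of citing Berndt is exactly what the paper does.
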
 
\begin{proof}
We first provide a proof for this proposition when $p \equiv 3 \pmod{4}$. First of all, we have the following equality in the case $p \equiv 3 \pmod{4}$.
\begin{align*}
\sum_{a=1}^{p-1} \left(\frac{a}{p} \right)(-1)^a
              &=\sum_{a=1}^{\frac{p-1}{2}}  \left [\left(\frac{a}{p} \right)(-1)^a+ \left(\frac{p-a}{p} \right)(-1)^{p-a} \right] \\
              &=\sum_{a=1}^{\frac{p-1}{2}} \left(\frac{a}{p} \right) \left[ (-1)^a-(-1)^{p-a} \right]\\
              &=2 \sum_{a=1}^{\frac{p-1}{2}} \left(\frac{a}{p} \right)(-1)^a. 
\end{align*} 
Second of all, by (\ref{eq:Fp(-1)}) we have 
\begin{align*}
\sum_{a=1}^{p-1} \left(\frac{a}{p} \right)(-1)^a
              =F_p(-1)=2\left(2\left(\dfrac{2}{p}\right)-1\right)h(-p).
\end{align*} 
Hence, we have 
\begin{equation}
\label{eq:Fp(-1)_halfsum}    
\sum_{a=1}^{\frac{p-1}{2}} \left(\frac{a}{p} \right)(-1)^a = \left(2\left(\dfrac{2}{p}\right)-1\right)h(-p). 
\end{equation} 
Therefore 
\[ \left(\frac{2}{p} \right) \sum_{a=1}^{\frac{p-1}{2}} \left(\frac{a}{p} \right)(-1)^a =\left(2-\left(\dfrac{2}{p}\right)\right)h(-p)>0 .\] 
Let us now consider the case $p \equiv 1 \pmod{4}$. In this case, we have 
\begin{align*} 
\sum_{a=1}^{\frac{p-1}{2}} \left(\frac{a}{p} \right)(-1)^a  &= \sum_{\substack{1 \leq a \leq \frac{p-1}{2} \\ a \equiv 0 \mod{2}}} \left(\frac{a}{p} \right)+\sum_{\substack{1 \leq a \leq \frac{p-1}{2} \\ a \equiv 1 \mod{2}}} \left(\frac{a}{p} \right) \\ 
                    &=\sum_{a=1}^{\frac{p-1}{4}} \left(\frac{2a}{p} \right)- \sum_{\substack{1 \leq a \leq \frac{p-1}{2} \\ a \equiv 1 \mod{2}}} \left(\frac{p-a}{p} \right)\\
                    &=\left(\frac{2}{p} \right) \sum_{a=1}^{\frac{p-1}{4}} \left(\frac{a}{p} \right)- \sum_{\substack{1 \leq a \leq \frac{p-1}{2} \\ a \equiv 1 \mod{2}}} \left(\frac{p-a}{p} \right).
\end{align*} 
Note that if $a$ is odd then $p-a$ is even. Let $2u=p-a$ in the second term. Then $\frac{p+1}{4} \leq u \leq \frac{p-1}{2}$. Because $p \equiv 1 \pmod{4}$, we also have $\frac{p+3}{4} \leq u \leq \frac{p-1}{2}$. Therefore, we have   
\begin{equation*} 
\sum_{\substack{1 \leq a \leq \frac{p-1}{2} \\ a \equiv 1 \mod{2}}} \left(\frac{p-a}{p} \right)=\sum_{u=\frac{p+3}{4}}^{\frac{p-1}{2}} \left(\frac{2u}{p} \right)=\left(\frac{2}{p} \right)\sum_{a=\frac{p+3}{4}}^{\frac{p-1}{2}} \left(\frac{a}{p} \right). 
\end{equation*} 
By  (\ref{eq:halfsum}), we have 
\[  \sum_{a=\frac{p+3}{4}}^{\frac{p-1}{2}} \left(\frac{a}{p} \right)=- \sum_{a=1}^{\frac{p-1}{4}} \left(\frac{a}{p} \right) .\] 
Hence
\[  \sum_{a=1}^{\frac{p-1}{2}} \left(\frac{a}{p} \right)(-1)^a =2 \left(\frac{2}{p} \right) \sum_{a=1}^{\frac{p-1}{4}} \left(\frac{a}{p} \right) .\] 
By \cite[Inequality 1.2]{[Berndt]}, we have 
\[  \sum_{a=1}^{\frac{p-1}{4}} \left(\frac{a}{p} \right)>0 .\] 
Therefore, we have 
\[  \left(\frac{2}{p} \right) \sum_{a=1}^{\frac{p-1}{2}} \left(\frac{a}{p} \right)(-1)^a >0 .\] 
\end{proof} 

\section{The polynomials $f_p(x)$ and $g_p(x)$} 
Let us define
\[
f_p(x)= \begin{cases}
\dfrac{F_p(x)}{x(1-x)} &\text{ if } p\equiv 3 \pmod 4\\
\dfrac{F_p(x)}{x(1-x)^2(x+1)} &\text{ if } p\equiv 1 \pmod 4.
\end{cases}
\]
By the results from the previous section, we know that $f_p(x) \in \Z[x]$. In this section, we investigate some arithmetical properties of $f_p(x)$.

First, we introduce the following general notations.  Let $A$ be a commutative ring with identity. Recall that given a polynomial 
\[
f(x) =a_0+a_1x+\cdots +a_nx^n,
\]
of degree $n$ with coefficients from  $A$, its reciprocal or reflected polynomial, denoted by $f^*$ or $f^R$, is the polynomial
\[
f^*(x)=x^n f \left(\dfrac{1}{x} \right).
\]
The coefficients of $f^*$ are the coefficients of $f$ in reverse order. Polynomial $f$ is called reciprocal or palindromic if $f=f^*$, that means $a_{k}=a_{n-k}$ for all $k$.

We also recall that the Dickson polynomial $D_n(x,a)$ of the first kind of degree $n\geq 1$ in the intermediate $x$ and with parameter $a\in A$ is defined as
\[
D_n(x,a) =\sum_{k=0}^{\lceil n/2 \rceil} \dfrac{n}{n-k} \binom{n-k}{k} (-a)^k x^{n-2k}.
\]
The term $\frac{n}{n-k} \binom{n-k}{k}$ is an integer. Dickson polynomials $D_n(x,a)$ have following two basic properties:
\begin{enumerate}
\item $D_n \left(x+\dfrac{1}{x},a \right)= x^n+\dfrac{a^n}{x^n}$.
\item $D_1(x,a) =x$, $D_2(x,a)= x^2-2a$, and
\[
D_n(x,a)= xD_{n-1}(x,a) - a D_{n-2}(x).
\] 
\end{enumerate}
For simplicity we will write $D_n(x)$ for $D_n(x,1)$ so that $D_n \left(x+\dfrac{1}{x} \right)= x^n+\dfrac{1}{x^n}$.

Now suppose that  $f(x) =a_0+a_1x+\cdots +a_nx^n\in A[x]$ is a reciprocal polynomial of even degree $n$. Write $n=2s$ and set 
\[
g(x) =\sum_{k=0}^{s-1} a_k D_{n-k}(x) +a_s \in A[x].
\]
Then $f(x) =x^s g(x+\dfrac{1}{x})$.

We have the following proposition. 
\begin{prop}
$f_p(x)$ is a reciprocal polynomial of even degree. 

\end{prop}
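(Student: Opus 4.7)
The plan is to derive a single twisted reciprocity relation for $F_p$, then check that the denominator $D_p$ used in the definition of $f_p$ satisfies the same twisted reciprocity, so that dividing cancels the twist and produces an honest reciprocal polynomial.

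First I verify that the degree is even. Since $\deg F_p = p - 1$, and $\deg D_p = 2$ when $p \equiv 3 \pmod 4$ while $\deg D_p = 4$ when $p \equiv 1 \pmod 4$, the degree of $f_p$ is $p - 3$ or $p - 5$ respectively, and in either case is divisible by $4$ (in particular even) by the congruence of $p$ modulo $4$.

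For the reciprocity, the key point is the twisted functional equation
\[ x^{p-1} F_p(1/x) = \left(\tfrac{-1}{p}\right) \frac{F_p(x)}{x}, \]
which follows by reindexing $a \mapsto p - a$ in the defining sum of $F_p$ and using $\left(\frac{p-a}{p}\right) = \left(\frac{-1}{p}\right) \left(\frac{a}{p}\right)$. One then checks the analogous identity for $D_p$: for $p \equiv 3 \pmod 4$ one has $D_p(x) = x(1-x)$ and $x^2 D_p(1/x) = x - 1 = -D_p(x)/x$, while for $p \equiv 1 \pmod 4$ one has $D_p(x) = x(1-x)^2(x+1)$ and $x^4 D_p(1/x) = (1-x)^2(x+1) = D_p(x)/x$. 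In each case the sign on the right-hand side is exactly $\left(\tfrac{-1}{p}\right)$.

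Dividing the two functional equations cancels both the twists and the extra factor of $x$, yielding $x^{\deg f_p} f_p(1/x) = f_p(x)$, which is precisely the reciprocity claim. There is no real obstacle: the whole argument is a direct calculation once one notices that $D_p$ (chosen so as to absorb the trivial roots of $F_p$ identified in Proposition~\ref{prop:trivialroots}) transforms under $x \mapsto 1/x$ in the same twisted way as $F_p$ itself. The only point requiring care is the sign bookkeeping in the case $p \equiv 3 \pmod 4$, where the factor $(1-x)$ contributes the needed $-1 = \left(\tfrac{-1}{p}\right)$.
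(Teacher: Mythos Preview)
Your proof is correct and follows essentially the same approach as the paper's. The paper treats the two congruence classes separately, computing $x^{p}F_p(1/x)=\pm F_p(x)$ in each case and then simplifying $x^{\deg f_p}f_p(1/x)$ directly; you package the same calculation more uniformly by writing the sign as $\left(\tfrac{-1}{p}\right)$ and observing that $D_p$ satisfies the identical twisted functional equation, so the twist cancels upon division. This is a cosmetic streamlining rather than a genuinely different argument.
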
 

\begin{proof}
Let us consider the case $p \equiv 3 \pmod{4}$. In this case, we have 
\[f_p(x)=\dfrac{F_p(x)}{x(1-x)}.\] 
Let us first consider the Fekete polynomial $F_p(x)$, we have 
\begin{align*}
x^p F_p \left(\frac{1}{x} \right) &= x^p \sum_{a=1}^{p-1} \left(\frac{a}{p} \right) \left(\frac{1}{x} \right)^{a}=\sum_{a=1}^{p-1}  \left(\frac{a}{p} \right) x^{p-a}\\
&=\sum_{u=1}^{p-1} \left(\frac{p-u}{p} \right)x^u= - \sum_{u=1}^{p-1} \left(\frac{u}{p} \right)x^u=-F_p(x).       \end{align*} 
We then have 
\begin{align*}
x^{p-3} f_p \left(\frac{1}{x} \right) &= x^{p-3} \dfrac{F_p \left(\frac{1}{x} \right)}{\dfrac{1}{x} \left(1-\dfrac{1}{x} \right)} = \dfrac{-x^p F_p(1/x)}{x(1-x)}\\
&=\dfrac{F_p(x)}{x(1-x)}=f_p(x).                            \end{align*} 
Note that the degree of $f_p$ is $\dfrac{p-3}{2}$ which is even. Therefore $f_p(x)$ is a reciprocal polynomial of even degree. 

Next, let us consider the case $p \equiv 1 \pmod{4}$. As the previous case, let us consider 
\begin{align*}
x^p F_p \left(\frac{1}{x} \right) &= x^p \sum_{a=1}^{p-1} \left(\frac{a}{p} \right) \left(\frac{1}{x} \right)^a =\sum_{a=1}^{p-1}  \left(\frac{a}{p} \right) x^{p-a} \\
&=\sum_{u=1}^{p-1} \left(\frac{p-u}{p} \right)x^u=  \sum_{u=1}^{p-1} \left(\frac{u}{p} \right)x^u=F_p(x).                                                               
\end{align*} 

We then have 

\begin{align*}
x^{p-5} f_p \left(\frac{1}{x} \right) &= x^{p-5} \dfrac{F_p \left(\frac{1}{x} \right)}{\frac{1}{x} \left(\frac{1}{x}+1 \right) \left(\frac{1}{x}-1 \right)^2} \\
&= \dfrac{x^p F_p(1/x)}{x(1+x)(x-1)^2}=\dfrac{F_p(x)}{x(1+x)(x-1)^2}=f_p(x).                                                            
\end{align*}
Note that the degree of $f_p(x)$ is $\frac{p-5}{2}$ which is even. We conclude that $f_p(x)$ is a reciprocal polynomial of even degree.
\end{proof}
It is natural to define the following related polynomial. 

\begin{definition}
Let $g_p(x) \in \Z[x]$ be the polynomial such that 
\[ f_p(x)=x^{\frac{\deg(f_p)}{2}} g_p \left(x+\frac{1}{x} \right) .\]
We will call $g_p(x)$ the reduced Fekete polynomial associated with $p$.
\end{definition} 

We provide the explicit formulas for $f_p(x)$ and $g_p(x)$ for  $p \leq 23$. Here are some explicit formulas for $f_p$. 
\[ f_{7}=x^4+2x^3+x^2+2x+1 .\] 
\[ f_{11}=x^8+x^6+2x^5+3x^4+2x^3+x^2+1 \] 
\[ f_{13}(x)=x^8 + 2 x^6 + 2 x^5 + 3 x^4 + 2 x^3 + 2 x^2 + 1. \] 
\[ f_{17}=x^{12} + 2x^{11} + 2x^{10} + 4x^9 + 3x^8 + 4x^7 + 2x^6 + 4x^5 + 3x^4 + 4x^3 + 2x^2 + 2x + 1 .\] 
\[ f_{19}=x^{16}-x^{14}+x^{12}+2x^{11}+3x^{10}+2x^{9}+3x^8+2x^7+3x^6+2x^5+x^4-x^2+1 .\] 
\begin{multline*}
f_{23}(x)=x^{20}+2x^{19}+3x^{18}+4x^{17}+3x^{16}+4x^{15} +3x^{14}+ 4x^{13}+5x^{12}+4x^{11} \\+3x^{10}+4x^{9}+5x^8+4x^7+3x^6+4x^5+3x^4+4x^3+3x^2+2x+1
\end{multline*}

Here are some explicit formulas for $g_p(u)$.
\[ g_7(u)=u^2 + 2u - 1 .\] 
\[ g_{11}(u)=u^4 - 3u^2 + 2u + 3. \] 
\[g_{13}(u)=u^4 - 2u^2 + 2u + 1.\]
\[ g_{17}(u)=u^6 + 2u^5 - 4u^4 - 6u^3 + 4u^2 + 2u - 2.\]
\[g_{19}(u)=u^8 - 9u^6 + 27u^4 + 2u^3 - 26u^2 - 4u+ 3.\]
\[ g_{23}(u)=u^{10} + 2u^9 - 7u^8 - 14u^7 + 14u^6 + 30u^5 - 5u^4 - 20u^3 - 3u^2 + 2u - 3.\]

It turns out that the special values of $g_p(x)$ contains lots of arithmetic information. We will demonstrate this observation by several propositions. For $p=3$ or $p=5$, $g_p(x)=1$ so these are the trivial cases. From now on, we assume that $p \geq 7$. Let us recall that for a quadratic extension $\Q(\sqrt{s})$ ($s \in \Q^{\times}$) of $\Q$, we denote by $h(s)$ its class number.

\begin{prop}
If $p \equiv 3 \pmod{4}$ then 
\[ g_p(2)=f_p(1)=p h(-p).\]

If $p \equiv 1 \pmod{4}$ then 
\[ g_p(2)= f_p(1)=\frac{p B_{2,\chi_p}}{4}.\]

\end{prop}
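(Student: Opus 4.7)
The starting observation is that the defining identity $f_p(x) = x^{\deg(f_p)/2}\,g_p\!\left(x+\frac{1}{x}\right)$ evaluated at $x = 1$ gives immediately $f_p(1) = g_p(2)$, so it suffices to compute $f_p(1)$ in each case. I would split according to $p \bmod 4$ and in each case use the factorizations from Proposition~\ref{prop:trivialroots} together with the Bernoulli-number identities established in Lemma~\ref{lem:mult}.

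\textbf{Case} $p \equiv 3 \pmod 4$. Here $F_p(x) = x(1-x)f_p(x)$. Differentiating once and evaluating at $x=1$, and using $F_p(1)=0$, one obtains $F_p'(1) = -f_p(1)$, so
\[
f_p(1) = -F_p'(1) = -\sum_{a=1}^{p-1}\left(\frac{a}{p}\right)a.
\]
The proof of Lemma~\ref{lem:mult}(a) shows $\sum_{a=1}^{p-1}\chi_p(a)a = pB_{1,\chi_p}$, and for $p\equiv 3\pmod 4$ the identity $B_{1,\chi_p} = -h(-p)$ recalled in the preliminaries yields $f_p(1) = ph(-p)$.

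\textbf{Case} $p \equiv 1 \pmod 4$. Now $F_p(x) = u(x)f_p(x)$ with $u(x) = x(1-x)^2(x+1)$. A short computation gives $u(1) = u'(1) = 0$ and $u''(1) = 4$ (write $u = (x^2+x)(1-x)^2$ and expand the product rule, where the $(1-x)^2$ factor forces the only surviving term to come from differentiating it twice). Differentiating $F_p = uf_p$ twice and evaluating at $x = 1$ then gives
\[
F_p''(1) = u''(1)\,f_p(1) = 4f_p(1).
\]
By (\ref{eq:Fp''(1)}) combined with (\ref{eq:p=1mod4_moment2}) from Lemma~\ref{lem:mult}, we already know $F_p''(1) = \sum_{a=1}^{p-1}\chi_p(a)a^2 = pB_{2,\chi_p}$, so $f_p(1) = pB_{2,\chi_p}/4$, as desired.

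There is no real obstacle: the work is entirely a matter of matching the multiplicity of $x=1$ as a root of $F_p$ to the order of vanishing of the denominator $u(x)$ that defines $f_p$, and then reading off $f_p(1)$ from the leading nonzero Taylor coefficient of $F_p$ at $x = 1$. The only care needed is the correct computation of $u''(1)=4$ in the case $p\equiv 1\pmod 4$, which accounts for the factor of $4$ in the denominator of the final formula.
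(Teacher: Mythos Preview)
Your proof is correct and follows essentially the same approach as the paper: both arguments reduce to reading off $f_p(1)$ as the leading nonzero Taylor coefficient of $F_p$ at $x=1$, using Lemma~\ref{lem:mult} and equations (\ref{eq:p=1mod4_moment2}), (\ref{eq:Fp''(1)}) for the Bernoulli identities. The only cosmetic difference is that the paper rearranges the defining relation (e.g.\ $x(x+1)f_p(x)=F_p(x)/(x-1)^2$) and takes a limit via L'H\^opital, whereas you apply the Leibniz rule to $F_p=u\cdot f_p$; these are the same computation, and your bookkeeping of $u''(1)=4$ is correct.
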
  

\begin{proof}
Let us first consider the case $p \equiv 3 \pmod{4}$. In this case $\dfrac{p-3}{2}$ is even, we have 
\[ g_p(2)=f_p(1) .\] 
We also have 
\[ xf_p(x)=\dfrac{F_p(x)}{1-x}.\] 
Taking the limit when $x \to 1$, we have 
\[ f_p(1)=F_p^{'}(1)=-\sum_{r=1}^{p-1} \left(\frac{r}{p} \right) r. \] 
The right hand side is a classical sum. More precisely we have the following class number formula (see \cite[Equation 3]{[ClassNumber]}) 
\[ \sum_{r=1}^{p-1} \left(\frac{r}{p} \right) r= -p h(-p).\] 
Hence, we see that 
\[ g_p(2)=f_p(1)=p h(-p).\]
Next, let us consider the case $p \equiv 1 \pmod{4}$. As above, we have 
\[ g_p(2)=f_p(1) .\] 
Let us now compute $f_p(1)$. We have 
\[ x(x+1)f_p(x)=\dfrac{F_p(x)}{(x-1)^2}.\] 
Taking the limit of both sides when $x \to 1$ and using (\ref{eq:p=1mod4_moment2}) and (\ref{eq:Fp''(1)}) we have 
\[ 2f_p(1)=\dfrac{F_{p}^{''}(1)}{2}=\dfrac{pB_{2, \chi_p}}{2}.\] 
Therefore 
\[f_p(1)=\frac{p B_{2,\chi_p}}{4}. 
\qedhere
\]
\end{proof}

\begin{prop} 
If $p \equiv 3 \pmod{4}$ then 
\[  g_p(-2)=f_p(-1)=-\left(2 \left(\frac{2}{p} \right)-1 \right)h(-p). \] 
On the other hand, if $p \equiv 1 \pmod{4}$ then
\[ f_p(-1)=g_p(-2)=-\frac{1}{4} \left( 4 \left(\frac{2}{p} \right)-1 \right) B_{2, \chi_p}.\] 
Here $B_{2, \chi_p}$ is the generalized Bernoulli number associated with the character $\chi_p$ that was introduced in the second section. 
\end{prop}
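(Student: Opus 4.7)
The plan is to evaluate $f_p(-1)$ directly from its definition in each parity case, and then transfer the value to $g_p(-2)$ via the identity $f_p(x)=x^{s}g_p(x+1/x)$ with $s=\deg(f_p)/2$. A quick inspection gives $\deg(f_p)=p-3$ when $p\equiv 3\pmod 4$ and $\deg(f_p)=p-5$ when $p\equiv 1\pmod 4$, so $s$ is an even integer in both cases. Substituting $x=-1$ therefore yields $f_p(-1)=(-1)^{s}g_p(-2)=g_p(-2)$, reducing the proposition to a computation of $f_p(-1)$.

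For $p\equiv 3\pmod 4$ this computation is immediate: the denominator $x(1-x)$ does not vanish at $-1$, so $f_p(-1)=F_p(-1)/[(-1)(2)]=-F_p(-1)/2$. Inserting the evaluation $F_p(-1)=2(2(\tfrac{2}{p})-1)h(-p)$ obtained in equation (\ref{eq:Fp(-1)}) of Lemma \ref{lem:Fp(-1)} yields the stated formula.

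For $p\equiv 1\pmod 4$, direct substitution is indeterminate, since both $F_p(-1)=0$ by Lemma \ref{lem:Fp(-1)} and the factor $(x+1)$ in the denominator vanishes. To bypass this, I write $F_p(x)=D(x)f_p(x)$ with $D(x)=x(1-x)^2(x+1)$, differentiate, and use $D(-1)=0$ to obtain $F'_p(-1)=D'(-1)f_p(-1)$. A short calculation gives $D'(-1)=-4$, hence $f_p(-1)=-F'_p(-1)/4$. It then remains to evaluate $F'_p(-1)$, which I carry out by combining equation (\ref{eq:Fp'(-1)}) of Lemma \ref{lem:p=1mod4} with the generalized Bernoulli identity (\ref{eq:halfmoment1}): the former expresses $F'_p(-1)$ as a constant multiple of the half-moment $\sum_{a=1}^{(p-1)/2}(\tfrac{a}{p})a$, and the latter rewrites this half-moment as $-(1-\chi_p(2)/4)B_{2,\chi_p}$. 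Using $\chi_p(2)=(\tfrac{2}{p})$ so that $\chi_p(2)(\tfrac{2}{p})=1$, the product collapses to $(4(\tfrac{2}{p})-1)B_{2,\chi_p}$, and dividing by $-4$ delivers the stated value of $f_p(-1)$.

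The main obstacle is the careful sign and constant bookkeeping in the $p\equiv 1\pmod 4$ case: one must correctly track the signs contributed by $D'(-1)=-4$ and by equation (\ref{eq:Fp'(-1)}), and then perform the simplification $(\tfrac{2}{p})\chi_p(2)=1$ so that the resulting coefficient matches exactly the asserted $-\tfrac{1}{4}(4(\tfrac{2}{p})-1)$. No new arithmetic input beyond the formulas already established in Section~2 is required.
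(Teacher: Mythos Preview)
Your proposal is correct and follows essentially the same route as the paper's own proof: both cases split the same way, invoke the same earlier results (\ref{eq:Fp(-1)}), (\ref{eq:Fp'(-1)}), (\ref{eq:halfmoment1}), and arrive at $-4f_p(-1)=F_p'(-1)$ in the $p\equiv 1\pmod 4$ case by what amounts to the product rule (the paper phrases it as a limit of $F_p(x)/(x+1)$, you differentiate $F_p=D\cdot f_p$; these are the same computation). Your remark about the parity of $s=\deg(f_p)/2$ to justify $f_p(-1)=g_p(-2)$ is exactly the observation the paper makes implicitly.
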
  

\begin{proof}
Let us consider the case $p \equiv 3 \pmod{4}$. 
Using (\ref{eq:Fp(-1)}) we have 
\[  g_p(-2)=f_p(-1)=-\dfrac{F_p(-1)}{2}=-\left(2 \left(\frac{2}{p} \right)-1 \right)h(-p).\] 
Next, let us consider the case $p \equiv 1 \pmod{4}$. In this case $\dfrac{p-5}{2}$ is even and we have 
\[ g_p(-2)=f_p(-1).\] 
By definition, we have 
\[ x(x-1)^2 f_p(x)=\dfrac{F_p(x)}{x+1} .\] 
Taking the limit when $x \to -1$ we get 
\[ -4f_p(-1)= (-1)(-2)^2 f_p(-1)=F_{p}^{'}(-1).\]
Thus, by (\ref{eq:Fp'(-1)}) and (\ref{eq:halfmoment1})
\[
\begin{aligned}
f_p(-1)&=-\dfrac{1}{4}F'_p(-1)=\left(\frac{2}{p} \right) \sum_{a=1}^{\frac{p-1}{2}} \left(\frac{a}{p} \right) a\\
&=-\left(\frac{2}{p} \right)\left(1-\frac{\chi_p(2)}{4} \right) B_{2, \chi_p}=-\frac{1}{4} \left( 4 \left(\frac{2}{p} \right)-1 \right) B_{2, \chi_p}.
\qedhere
\end{aligned}
\]
\end{proof} 

Next, we compute the values of $g_p(x)$ at $0$. 

\begin{prop}
Let $p \equiv 3 \pmod{4}$.  Then 
\[ g_p(0)=g_p(-2)=-\left(2 \left(\frac{2}{p} \right)-1 \right)h(-p) .\] 
\end{prop}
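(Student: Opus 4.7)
The plan is to compute $g_p(0)$ by substituting $x = i$ into the defining relation $f_p(x) = x^{(p-3)/2}\, g_p(x + 1/x)$, exploiting the fact that $i + 1/i = 0$. For $p \equiv 3 \pmod 4$ the exponent $(p-3)/2$ is even, so the substitution gives
\[
g_p(0) \;=\; i^{-(p-3)/2} f_p(i) \;=\; (-1)^{(p-3)/4} f_p(i).
\]
From the definition $f_p(x) = F_p(x)/(x(1-x))$ and the identity $i(1-i) = 1+i$, one gets $f_p(i) = F_p(i)/(1+i)$, so everything reduces to evaluating the Fekete polynomial at $i$.

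The key step is to split $F_p(i) = \sum_{a=1}^{p-1} \chi_p(a) i^a$ by the residue of $a$ modulo $4$. Let $S_r = \sum \chi_p(a)$, the sum being over $1 \leq a \leq p-1$ with $a \equiv r \pmod 4$. Then
\[
F_p(i) = (S_0 - S_2) + i\,(S_1 - S_3).
\]
The involution $a \mapsto p - a$ on $\{1,\ldots,p-1\}$ sends residues mod $4$ as $0 \leftrightarrow 3$ and $1 \leftrightarrow 2$ (since $p \equiv 3 \pmod 4$) and multiplies $\chi_p$ by $-1$, hence $S_0 = -S_3$ and $S_1 = -S_2$. Consequently $S_0 - S_2 = S_1 - S_3$, so $F_p(i) = (1+i)(S_0 - S_2)$ and
\[
f_p(i) \;=\; S_0 - S_2.
\]

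Next I would recognize the resulting difference as a twisted alternating half-sum. Writing $a = 2b$ for even $a$, and noting $(-1)^{a/2}$ selects $S_0$ versus $S_2$,
\[
S_0 - S_2 \;=\; \chi_p(2) \sum_{b=1}^{(p-1)/2} (-1)^b \chi_p(b).
\]
Applying identity~(\ref{eq:Fp(-1)_halfsum}) from the proof of Proposition~\ref{prop:half_sum}, the inner sum equals $(2\chi_p(2) - 1)\, h(-p)$, which (since $\chi_p(2)^2 = 1$) gives $f_p(i) = (2 - \chi_p(2))\, h(-p)$.

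The most delicate bookkeeping is the final sign. I would handle it by splitting into $p \equiv 3 \pmod 8$ and $p \equiv 7 \pmod 8$: the second supplement to quadratic reciprocity gives $\chi_p(2) = -1$ in the first case and $\chi_p(2) = 1$ in the second, matching $(-1)^{(p-3)/4} = 1$ and $-1$ respectively. In both cases one obtains the uniform identity $(-1)^{(p-3)/4} = -\chi_p(2)$. Combining,
\[
g_p(0) \;=\; -\chi_p(2)\,(2 - \chi_p(2))\, h(-p) \;=\; -(2\chi_p(2) - 1)\, h(-p),
\]
which agrees with $g_p(-2)$ as computed in the previous proposition.
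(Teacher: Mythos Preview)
Your proof is correct and follows essentially the same route as the paper's: both substitute $x=i$ into $f_p(x)=x^{(p-3)/2}g_p(x+1/x)$, reduce $F_p(i)$ via the involution $a\mapsto p-a$ to the alternating half-sum $\chi_p(2)\sum_{b\le (p-1)/2}(-1)^b\chi_p(b)$, and then invoke identity~(\ref{eq:Fp(-1)_halfsum}). Your decomposition into the four residue classes $S_0,\dots,S_3$ is a slightly more systematic packaging of the same pairing the paper uses, and your identification $(-1)^{(p-3)/4}=-\chi_p(2)$ via the second supplement is exactly the paper's $i^{(p-3)/2}=-\chi_p(2)$.
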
 
\begin{proof}

The statement for $g_p(-2)$ is a direct consequence of the previous proposition. Let us focus on the case $g_p(0)$. First, we recall that  
\[ f_p(x)=x^{\frac{p-3}{2}} g_p \left(x+\frac{1}{x} \right) .\] Plugging $x=i=\sqrt{-1}$ into this equation gives 
\[ f_p(i)=i^{\frac{p-3}{2}} g_p(0)=-\left(\frac{2}{p} \right) g_p(0) .\] 
Now, let us compute $f_p(i)$. We have 
\[ f_p(i)=-\frac{F_p(i)}{i(i-1)}=\frac{F_p(i)}{i+1} .\] 
By the above equality we have 
\[ g_p(0)=-\frac{\left(\frac{2}{p} \right)F_p(i)}{i+1} .\]
We then have 
\begin{align*}
F_p(i) &=\sum_{i=1}^{p-1} \left(\frac{a}{p} \right)(-i)^a 
             =\sum_{a=1}^{\frac{p-1}{2}} \left[\left(\frac{2a}{p} \right) i^{2a} + \left(\frac{p-2a}{p} \right) i^{p-2a} \right] \\
             &=(1+i)\sum_{a=1}^{\frac{p-1}{2}} \left(\frac{2a}{p} \right)i^{2a}=(1+i) \left(\frac{2}{p} \right) \sum_{a=1}^{\frac{p-1}{2}} \left(\frac{a}{p} \right)(-1)^a\\
             &=(1+i)\left(\frac{2}{p} \right)\left(2\left(\frac{2}{p} \right)-1\right)h(-p) \quad (\text{by } (\ref{eq:Fp(-1)_halfsum})).\\
      \end{align*} 
Therefore, we have 
\[ g_p(0)=-\left(2 \left(\frac{2}{p} \right)-1 \right)h(-p).
\qedhere\]
\end{proof} 
By the same method, we can also compute $g_p(-1)$. Here we note that if $\zeta_{3}$ is the cubic root of $1$, namely $\zeta_{3}=\exp(\frac{2 \pi i}{3})=\dfrac{-1+\sqrt{-3}}{2}$ then 
\[ \zeta_{3}+\frac{1}{\zeta_{3}}=-1 .\] 
We then have 
\begin{equation}  \label{eq:zeta_3}
g_p(-1)=g_p \left(\zeta_{3}+\frac{1}{\zeta_3} \right)=-\frac{F_p(\zeta_{3})}{\zeta_{3}^{\frac{p-1}{2}}(\zeta_3-1)}.
\end{equation} 
Now, let us compute $F_p(\zeta_3)$. By definition 
\begin{equation*} 
F_{p}(\zeta_3)=\sum_{a=1}^{p-1} \left(\frac{a}{p} \right) \zeta_{3}^{a}. 
\end{equation*} 
We can break this sum into three sums according to $a \pmod{3}$. More precisely 
\begin{align*}
\sum_{a=1}^{p-1} \left(\frac{a}{p} \right) \zeta_{3}^{a} &= \sum_{0<3a <p} \left(\frac{3a}{p} \right) \zeta_{3}^{3a} +  \sum_{0<3a+1 <p} \left(\frac{3a+1}{p} \right) \zeta_{3}^{3a+1}+\sum_{0<3a+2 <p} \left(\frac{3a+2}{p} \right) \zeta_{3}^{3a+2}  \\
      &=\sum_{0<a \leq \frac{p-1}{3}} \left(\frac{3a}{p} \right) + \zeta_{3} \sum_{0<3a+1 <p} \left(\frac{3a+1}{p} \right) +\zeta_{3}^2 \sum_{0<3a+2 < p} \left(\frac{3a+2}{p} \right).  
\end{align*} 
First, let us consider the case $p \equiv 1 \pmod{3}$. In this case, the two sets $\{3a \}$ and $\{p-3a-1\}$ are the same. Similarly, the two sets $\{3a+2 \}$ and $\{p-3a-2 \}$ are the same. Additionally, when $p \equiv 3 \pmod{4}$, we have  
\begin{align*}
 \sum_{0<3a+1 <p} \left(\frac{3a+1}{p} \right) &=- \sum_{0<3a+1 < p} \left(\frac{p-3a-1}{p} \right)= -\sum_{0<3a<p} \left(\frac{3a}{p} \right).\\
\end{align*}
Simillarly 
\begin{align*}
 \sum_{0<3a+2 <p} \left(\frac{3a+2}{p} \right) &=- \sum_{0<3a+2 < p} \left(\frac{p-3a-2}{p} \right)= -\sum_{0<3a+2<p} \left(\frac{3a+2}{p} \right).\\
\end{align*}
Consequently, 
\[  \sum_{0<3a+2 <p} \left(\frac{3a+2}{p} \right) =0.\]
In summary, we have 
\[ F_{p}(\zeta_3)=\sum_{a=1}^{p-1} \left(\frac{a}{p} \right) \zeta_{3}^{a}=(1-\zeta_{3}) \sum_{0<3a<p} \left(\frac{3a}{p} \right)=(1-\zeta_3) \left(\frac{3}{p} \right) \sum_{0<a<p/3} \left(\frac{a}{p} \right) .\] 
By \cite[Corollary 4.3]{[Berndt]} we have 
\[  \sum_{0<a<p/3} \left(\frac{a}{p} \right)=\frac{1}{2} \left(3-\left(\frac{3}{p} \right)  \right) h(-p) .\] 
We then have 
\[ F_p(\zeta_3)=\frac{1}{2} (1-\zeta_3) \left(3\left(\frac{3}{p}\right)- 1 \right) h(-p).\]
From Equation \ref{eq:zeta_3} and the fact that $p \equiv 1 \pmod{3}$, we have 
\[ g_p(-1)= -\frac{F_p(\zeta_{3})}{\zeta_{3}^{\frac{p-1}{2}}(\zeta_3-1)}=\frac{1}{2} \left(3\left(\frac{3}{p}\right)-1  \right) h(-p).\]

Now, let us consider the case $p \equiv 2 \pmod{3}$. By the same method, we can see that in this case 

\begin{align*}
 \sum_{0<3a+2 <p} \left(\frac{3a+2}{p} \right) &=- \sum_{0<3a+2 < p} \left(\frac{p-3a-2}{p} \right)
                  = -\sum_{0<3a<p} \left(\frac{3a}{p} \right).\\
\end{align*}
Additionally 
\[  \sum_{0<3a+1 <p} \left(\frac{3a+1}{p} \right) =0.\]
Therefore 
\[ F_p(\zeta_3)=(1-\zeta_3^2) \left(\frac{3}{p} \right) \sum_{0<a<p/3} \left(\frac{a}{p} \right) .\] 
By \cite[Corollary 4.3]{[Berndt]} we have 
\[  \sum_{0<a<p/3} \left(\frac{a}{p} \right)=\frac{1}{2} \left(3-\left(\frac{3}{p} \right)  \right) h(-p) .\] 
We then have 
\[ F_p(\zeta_3)=\frac{1}{2} (1-\zeta_3^2) \left(3\left(\frac{3}{p}\right)- 1 \right) h(-p).\]
We note that when $p \equiv 2 \pmod{3}$ we have $\zeta_{3}^{\frac{p-1}{2}}=\zeta_{3}^2$. Therefore 
\[ \zeta_{3}^{\frac{p-1}{2}} (\zeta_3-1)=\zeta_3^2(\zeta_3-1)=1-\zeta_{3}^2 .\] 
We conclude that in this case we have 
\[ g_p(-1)= -\frac{F_p(\zeta_{3})}{\zeta_{3}^{\frac{p-1}{2}}(\zeta_3-1)}=-\frac{1}{2} \left(3\left(\frac{3}{p}\right)-1  \right) h(-p) .\]
In summary we have the following proposition. 
\begin{prop}
Let $p \equiv 3 \pmod{4}$ then 
\[ g_p(-1) =\begin{cases} \frac{1}{2} \left(3\left(\frac{3}{p}\right)-1  \right) h(-p) &\text { if } p\equiv 1\pmod 3\\
 -\frac{1}{2} \left(3\left(\frac{3}{p}\right)-1  \right) h(-p) &\text { if } p\equiv  2 \pmod 3.
\end{cases}\]
In other words, we have 
\[ g_p(-1)=-\frac{1}{2} \left(\left(\frac{p}{3} \right)+3 \right) h(-p).\]
\end{prop}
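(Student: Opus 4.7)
The plan is to evaluate $g_p$ at the image of a primitive cube root of unity under $x \mapsto x + x^{-1}$, mirroring the strategy used just above for $g_p(0)$ (which used $x = i$). Since $\zeta_3 + \zeta_3^{-1} = -1$, substituting $x = \zeta_3$ into the defining relation $f_p(x) = x^{(p-3)/2} g_p(x + 1/x)$ and combining with $f_p(x) = F_p(x)/(x(1-x))$ for $p\equiv 3\pmod 4$ expresses $g_p(-1)$ as
\[
g_p(-1) = -\frac{F_p(\zeta_3)}{\zeta_3^{(p-1)/2}(\zeta_3-1)}.
\]
So everything reduces to computing $F_p(\zeta_3)$ and then carefully simplifying the prefactor.

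Next I would compute $F_p(\zeta_3) = \sum_{a=1}^{p-1}\bigl(\frac{a}{p}\bigr) \zeta_3^a$ by partitioning the sum according to $a \bmod 3$, since $\zeta_3^a$ depends only on this residue. This breaks the sum into three pieces weighted by $1$, $\zeta_3$, and $\zeta_3^2$. To each piece I apply the standard substitution $a \mapsto p - a$, using the hypothesis $p \equiv 3 \pmod 4$ in the form $\bigl(\frac{p-a}{p}\bigr) = -\bigl(\frac{a}{p}\bigr)$. The outcome is that one of the three partial sums vanishes while the other two become negatives of each other; precisely which pair gets identified depends on whether $p\equiv 1$ or $p\equiv 2\pmod 3$, which is the source of the dichotomy in the statement. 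This collapses $F_p(\zeta_3)$ to a scalar multiple of $\sum_{0<a<p/3}\bigl(\frac{a}{p}\bigr)$.

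I would then invoke Berndt's Corollary 4.3, which evaluates the third-sum as $\frac{1}{2}\bigl(3 - \bigl(\tfrac{3}{p}\bigr)\bigr) h(-p)$, to put $F_p(\zeta_3)$ in closed form involving $h(-p)$ and $\bigl(\tfrac{3}{p}\bigr)$. Dividing by $\zeta_3^{(p-1)/2}(\zeta_3-1)$ then requires tracking how this denominator simplifies in the two cases: when $p\equiv 1\pmod 3$ we have $\zeta_3^{(p-1)/2} = 1$, and when $p\equiv 2\pmod 3$ we have $\zeta_3^{(p-1)/2} = \zeta_3^2$, which combines cleanly with $(\zeta_3-1)$ via $\zeta_3^2(\zeta_3-1) = 1-\zeta_3^2$. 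After cancellation the resulting expression becomes rational and yields exactly the piecewise formula.

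Finally, to unify the two cases I would invoke quadratic reciprocity: since $p\equiv 3\pmod 4$, one gets $\bigl(\tfrac{3}{p}\bigr) = -\bigl(\tfrac{p}{3}\bigr)$, and a direct check shows both cases collapse to $-\tfrac{1}{2}\bigl(\bigl(\tfrac{p}{3}\bigr)+3\bigr)h(-p)$. The main obstacle here is not conceptual but bookkeeping: the computation involves signs coming from $\zeta_3^{(p-1)/2}$, from the pairing $a\mapsto p-a$ in each of the three subsums, from the sign $-\bigl(\frac{a}{p}\bigr) = \bigl(\frac{p-a}{p}\bigr)$, and finally from reciprocity. A single sign slip anywhere breaks the unified formula, so I would verify each case numerically against a small prime (say $p=7$ or $p=11$) using the explicit polynomials $g_7, g_{11}$ listed earlier as a sanity check before declaring the proof complete.
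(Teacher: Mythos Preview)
Your proposal is correct and follows essentially the same route as the paper: evaluate at $x=\zeta_3$ to express $g_p(-1)$ in terms of $F_p(\zeta_3)$, split $F_p(\zeta_3)$ according to residues modulo $3$, exploit $\bigl(\tfrac{p-a}{p}\bigr)=-\bigl(\tfrac{a}{p}\bigr)$ to collapse the pieces, apply Berndt's Corollary 4.3 for the third-sum, and then track $\zeta_3^{(p-1)/2}$ in the two cases modulo $3$. The only addition is your explicit use of quadratic reciprocity to merge the two cases into the single formula, which the paper leaves implicit.
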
  
Let us go further to compute $g_p(1)$. To do so, we use the $6$-primitive root of unity, namely $x=\zeta_{6}=\exp(\frac{2 \pi i}{6})$. We first note that  \[1=\zeta_6+\frac{1}{\zeta_6}.\]
As before, we have 
\[ g_{p}(1)= -\frac{F_p(\zeta_6)}{\zeta_6^{\frac{p-1}{2}}(\zeta_6-1)} .\]
We have 
\[ F_p(\zeta_6)= \sum_{i=0}^{5} \zeta_{6}^i \left[ \sum_{0<6n+i<p} \left(\frac{6n+i}{p} \right)\right] .\]
To compute this sum, we use the same technique as before. First, we note that when $p \equiv 1 \pmod{3}$, the following sets are the same 
\[ \{6n+1 \}=\{p-6n \}, \{6n+4\}=\{p-(6n+3) \}, \{6n+5 \}=\{p-(6n+2) \} .\] 
From this observation we have the following identities 
\[  \sum_{0<6n+1<p} \left(\frac{6n+1}{p} \right)=-  \sum_{0<6n<p} \left(\frac{6n}{p} \right) ,\] 
\[  \sum_{0<6n+4<p} \left(\frac{6n+4}{p} \right)=-  \sum_{0<6n+3<p} \left(\frac{6n+3}{p} \right) ,\] 
\[  \sum_{0<6n+5<p} \left(\frac{6n+5}{p} \right)=-  \sum_{0<6n+2<p} \left(\frac{6n+2}{p} \right).\] 
Therefore, we see that 
\[ F_p(\zeta_6)=(1-\zeta_6)  \sum_{0<6n<p} \left(\frac{6n}{p} \right) +(\zeta_6^2-\zeta_6^5)  \sum_{0<6n+2<p} \left(\frac{6n+2}{p} \right)+(\zeta_6^3-\zeta_6^4)  \sum_{0<6n+3<p} \left(\frac{6n+3}{p} \right) .\]
Now, let us simplify the second and the third sums. We have 
\begin{align*}
\sum_{0<6n+2<p} \left(\frac{6n+2}{p} \right) &=  \left(\frac{2}{p} \right) \sum_{0<3n+1<p/2} \left(\frac{3n+1}{p} \right)\\
       &=-\left(\frac{2}{p} \right) \sum_{0<3n+1<p/2}  \left(\frac{p-(3n+1)}{p} \right).
\end{align*}
Let $3u=p-(3n+1)$. Then $u \in \Z$ and $p/6<u<p/3$. Therefore, the above identity can be rewritten as 
\[ \sum_{0<6n+2<p} \left(\frac{6n+2}{p} \right) =-\left(\frac{6}{p} \right) \sum_{p/6<u<p/3} \left(\frac{u}{p} \right) =-\left(\frac{6}{p} \right) S_{26}.\]
Here we use the notations $S_{ij}$ as introduced in \cite[Page 265]{[Berndt]}.  By a similar computation, we can see that 
\[ \sum_{0<6n+3<p} \left(\frac{6n+3}{p} \right)=-\left(\frac{6}{p} \right) \sum_{p/3<u<p/2} \left(\frac{u}{p} \right)=-\left(\frac{6}{p} \right) S_{36} .\] Finally, we have 
\[ \sum_{0<6n<p} \left(\frac{6n+2}{p} \right) =\left(\frac{6}{p} \right) S_{16} .\] 
In summary, we have 
\begin{align*}
F_{p}(\zeta_6) &=\left(\frac{6}{p} \right) \left[ (1-\zeta_6)S_{16}-(\zeta_6^2-\zeta_6^5) S_{26}-(\zeta_6^3-\zeta_6^4) S_{36} \right]\\
                       &=-\zeta_6^2 \left(\frac{6}{p} \right) \left[ S_{16}+2 S_{26}+S_{36} \right].
\end{align*} 
Here we use the following identities
\[ 1-\zeta_6=-\zeta_6^2, \zeta_6^2-\zeta_6^5=2 \zeta_{6}^2, \zeta_6^3-\zeta_6^4=\zeta_6^2 .\] 
By \cite[Theorem 6.1]{[Berndt]}, we have 
\[ S_{16}=\frac{h(-p)}{2} \left[1+\left(\frac{2}{p} \right)+\left(\frac{3}{p} \right)-\left(\frac{6}{p} \right) \right],\] 

\[ S_{26}=\frac{h(-p)}{2} \left[2-\left(\frac{2}{p} \right)-2\left(\frac{3}{p} \right)+\left(\frac{6}{p} \right) \right],\] 

\[ S_{36}=\frac{h(-p)}{2} \left[1-2\left(\frac{2}{p} \right)+\left(\frac{3}{p} \right) \right],\] 
By some simple algebraic calculations, we have 
\[ S_{16}+2S_{26}+S_{36}= 6-3 \left(\frac{2}{p} \right) -2 \left(\frac{3}{p} \right)+\left(\frac{6}{p} \right) .\]
Using these equations, we conclude that 
\[ F_{p}(\zeta_6)= -\zeta_6^{2} \times \frac{h(-p)}{2} \left(\frac{6}{p} \right) \left[ 6-3 \left(\frac{2}{p} \right) -2 \left(\frac{3}{p} \right)+\left(\frac{6}{p} \right)\right].\] 
Note that when $p \equiv 1 \pmod{3}$ we have $\zeta_{6}^{\frac{p-1}{2}}=-1$. Additionally, we note that $\zeta_6-1=\zeta_6^2$. Consequently, we have 
\[ g_p(1)=-\frac{h(-p)}{2} \left(\frac{6}{p} \right) \left[6-3 \left(\frac{2}{p} \right) -2 \left(\frac{3}{p} \right)+\left(\frac{6}{p} \right) \right] .\] 
Now, let us consider the case $p \equiv 2 \pmod{3}$. In this case, we observe that the following sets are the same 
\[ \{6n+5 \}=\{p-6n \}, \{6n+4\}=\{p-(6n+1) \}, \{6n+2 \}=\{p-(6n+3) \}  .\] 
Therefore 
\[ F_p(\zeta_6)=(1-\zeta_6^5)  \sum_{0<6n<p} \left(\frac{6n}{p} \right) +(\zeta_6^4-\zeta)  \sum_{0<6n+4<p} \left(\frac{6n+2}{p} \right)+(\zeta_6^3-\zeta_6^2)  \sum_{0<6n+3<p} \left(\frac{6n+3}{p} \right) .\]
By the same arguments in in the case $p \equiv 1 \pmod{3}$, we have
\[ \sum_{0<6n+2<p} \left(\frac{6n+2}{p} \right) =-\left(\frac{6}{p} \right) \sum_{p/6<u<p/3} \left(\frac{u}{p} \right) =-\left(\frac{6}{p} \right) S_{26},\]
and 
\[ \sum_{0<6n+3<p} \left(\frac{6n+3}{p} \right)=-\left(\frac{6}{p} \right) \sum_{p/3<u<p/2} \left(\frac{u}{p} \right)=-\left(\frac{6}{p} \right) S_{36} .\]  
By \cite[Theorem 6.1]{[Berndt]}, we have 
\begin{align*} 
F_{p}(\zeta_6) &=\left(\frac{6}{p} \right) \left[ (1-\zeta_6^5)S_{16}-(\zeta_6^4-\zeta_6) S_{26}-(\zeta_6^3-\zeta_6^2) S_{36} \right] \\
                        &=\zeta_6 \left(\frac{6}{p} \right) \left[ S_{16}+2 S_{26}+S_{36} \right]\\
                        &=\zeta_6 \frac{h(-p)}{2} \left(\frac{6}{p} \right) \left[6-3 \left(\frac{2}{p} \right) -2 \left(\frac{3}{p} \right)+\left(\frac{6}{p} \right)\right].
\end{align*} 
When $p \equiv 2 \pmod{3}$, we also have 
\[ \zeta_{6}^{\frac{p-1}{2}}(\zeta_6-1)=\zeta_6 .\] 
Hence 
\[g_p(1)=-\frac{F_p(\zeta_6)}{\zeta_{6}^{\frac{p-1}{2}}(\zeta_6-1)}=-\frac{h(-p)}{2} \left(\frac{6}{p} \right)  \left[6-3 \left(\frac{2}{p} \right) -2 \left(\frac{3}{p} \right)+\left(\frac{6}{p} \right)\right].\]
In summary, we have just showed that. 
\begin{prop}
Let $p \equiv 3 \pmod{4}$. Then 
\[ g_p(1)=-\frac{h(-p)}{2} \left(\frac{6}{p} \right)  \left[6-3 \left(\frac{2}{p} \right) -2 \left(\frac{3}{p} \right)+\left(\frac{6}{p} \right)\right].\]
\end{prop}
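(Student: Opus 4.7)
The plan is to mirror the method used for $g_p(-1)$ in the excerpt, but with a primitive sixth root of unity $\zeta_6 = e^{2\pi i/6}$ in place of $\zeta_3$. Since $\zeta_6 + 1/\zeta_6 = 1$, substituting $x = \zeta_6$ into the identity $f_p(x) = x^{(p-3)/2} g_p(x + 1/x)$ and using the definition $f_p(x) = F_p(x)/(x(1-x))$ immediately gives
\[ g_p(1) = -\frac{F_p(\zeta_6)}{\zeta_6^{(p-1)/2}(\zeta_6 - 1)}, \]
so the entire problem reduces to computing the character sum $F_p(\zeta_6) = \sum_{a=1}^{p-1} \bigl(\frac{a}{p}\bigr)\zeta_6^a$.

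To evaluate $F_p(\zeta_6)$, I split the sum by residue class of $a$ modulo $6$, obtaining six partial sums $T_i := \sum_{0 < 6n+i < p} \bigl(\frac{6n+i}{p}\bigr)$ for $i \in \{0, 1, \dots, 5\}$. Using $\bigl(\frac{p-a}{p}\bigr) = -\bigl(\frac{a}{p}\bigr)$ (valid since $p \equiv 3 \pmod 4$), I pair complementary residue classes according to whether $p \equiv 1$ or $5 \pmod 6$: in the first case $T_1 = -T_0$, $T_4 = -T_3$, $T_5 = -T_2$; in the second case $T_5 = -T_0$, $T_4 = -T_1$, $T_3 = -T_2$. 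Either way, $F_p(\zeta_6)$ collapses to a linear combination of only three of the $T_i$'s with sixth-root-of-unity coefficients.

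The three surviving sums can then be rewritten as Berndt's classical half-interval sums $S_{16}, S_{26}, S_{36}$ by a linear change of variable (of the shape $6n+i = p-6m-j$ or $6n+i = 2(3m+k)$) and by extracting the Legendre symbols $\bigl(\frac{2}{p}\bigr)$, $\bigl(\frac{3}{p}\bigr)$, $\bigl(\frac{6}{p}\bigr)$ as appropriate. Substituting the closed-form values of $S_{16}, S_{26}, S_{36}$ from \cite[Theorem 6.1]{[Berndt]} and simplifying the combination of sixth roots of unity, the same symmetric quantity $S_{16} + 2 S_{26} + S_{36}$ emerges in both residue cases and equals $\frac{h(-p)}{2}\bigl[6 - 3\bigl(\frac{2}{p}\bigr) - 2\bigl(\frac{3}{p}\bigr) + \bigl(\frac{6}{p}\bigr)\bigr]$; the only case-dependent feature is an overall sixth-root-of-unity prefactor on $F_p(\zeta_6)$.

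The final step is to divide by $\zeta_6^{(p-1)/2}(\zeta_6 - 1)$, which one computes to be $-\zeta_6^2$ when $p \equiv 1 \pmod 3$ and $\zeta_6$ when $p \equiv 2 \pmod 3$. These are precisely the factors that cancel the residual $\zeta_6$'s in $F_p(\zeta_6)$, leaving a uniform answer independent of $p \bmod 3$ and producing the formula in the statement. The main obstacle is the mod-$3$ bookkeeping: the six partial sums pair differently in the two cases, the Berndt formulas carry signed Legendre-symbol combinations, and the denominator sign depends on the same residue class; what makes the argument close is the rotational symmetry built into Berndt's formulas, which forces the symmetric combination $S_{16} + 2 S_{26} + S_{36}$ to coincide in both cases and the sixth-root-of-unity prefactors in numerator and denominator to cancel exactly.
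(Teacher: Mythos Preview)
Your proposal is correct and follows essentially the same approach as the paper: the paper too substitutes $x=\zeta_6$ into $f_p(x)=x^{(p-3)/2}g_p(x+1/x)$, splits $F_p(\zeta_6)$ by residues modulo $6$, uses the antisymmetry $\left(\tfrac{p-a}{p}\right)=-\left(\tfrac{a}{p}\right)$ to pair the six partial sums according to $p\bmod 3$, rewrites the surviving sums as Berndt's $S_{16},S_{26},S_{36}$, and arrives at the same combination $S_{16}+2S_{26}+S_{36}$ before dividing by $\zeta_6^{(p-1)/2}(\zeta_6-1)$ (which, as you note, equals $-\zeta_6^2$ or $\zeta_6$ in the two cases). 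The only cosmetic difference is that the paper keeps the two cases $p\equiv 1,2\pmod 3$ separate throughout rather than emphasizing at the outset that they must agree by symmetry.
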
 
Next, let us compute $g_p(0)$, $g_p(1)$ and $g_p(-1)$ when $p \equiv 1 \pmod{4}$. First, let us compute $g_p(-1)$. We have 
\[ g_p(0)=\frac{F_p(i)}{i^{\frac{p-3}{2}} (1+i)(i-1)^2} =\frac{F_p(i)}{-2(i+1)i^{\frac{p-1}{2}}} .\]
We have 
\[ F_p(i)=\sum_{i=1}^{p-1} \left(\frac{n}{p} \right)i^{n} =\sum_{i=0}^{3} i^a \left[\sum_{0<4n+i<p} \left(\frac{4n+i}{p} \right)  \right].\] 
The following sets are the same 
\[ \{4n \}=\{p-(4n+1)\}, \{4a+3\}=\{p-(4n+3) \} .\] 
Therefore we have 
\[ \sum_{0<4n+1<p} \left(\frac{4a+1}{p} \right)=\sum_{0<4n+1<p} \left(\frac{p-(4n+1)}{p} \right)=\sum_{0<4n<p} \left(\frac{4n}{p} \right) =S_{14} .\]
Similarly, we have 
 \[ \sum_{0<4n+3<p} \left(\frac{4n+3}{p} \right)=\sum_{0<4n+3<p} \left(\frac{p-(4n+3)}{p} \right)=\sum_{0<4n+2<p} \left(\frac{4n+2}{p} \right) .\]
 We have 
 \begin{align*}
\sum_{0<4n+2<p} \left(\frac{4n+2}{p} \right) &=  \left(\frac{2}{p} \right) \sum_{0<2n+1<p/2} \left(\frac{2n+1}{p} \right)\\
       &=\left(\frac{2}{p} \right) \sum_{0<2n+1<p/2}  \left(\frac{p-(2n+1)}{p} \right)\\
       &=\left(\frac{2}{p} \right) \sum_{p/4<u<p/2}  \left(\frac{2u}{p} \right)=\sum_{p/4<u<p/2} \left(\frac{u}{p} \right).
\end{align*}
By (\ref{eq:halfsum}), we have 
\[ \sum_{p/4<u<p/2} \left(\frac{u}{p} \right)=- \sum_{0<u<p/4} \left(\frac{u}{p} \right)=-S_{14} .\] 
We then deduce that 
\[ F_p(\zeta_4)=(1+i)S_{14}-(i^2+i^3)S_{14}=2 (i+1)S_{14} .\]
Hence 
\[ g_p(-1)=\frac{F_p(\zeta_4)}{-2(i+1)i^{\frac{p-1}{2}}}=\frac{2(i+1)S_{14}}{-2(i+1) i^{\frac{p-1}{2}}}=-\frac{S_{14}}{(-1)^{\frac{p-1}{4}}}=-\left(\frac{2}{p} \right) S_{14} .\]
By \cite[Corollary 3.9]{[Berndt]}, we have $S_{14}=\frac{1}{2} h(-4p)$. Therefore, we have 
\begin{prop}
Let $p \equiv 1 \pmod{4}$, then
\[g_p(0)=-\frac{1}{2} \left(\frac{2}{p} \right) h(-4p).\] 
\end{prop}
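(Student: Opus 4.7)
The plan is to exploit the defining relation $f_p(x) = x^{(p-5)/2} g_p(x + 1/x)$ by specializing to $x = i$, since $i + 1/i = 0$. Combined with the definition $f_p(x) = F_p(x)/(x(1-x)^2(x+1))$, this reduces the problem to
\[
g_p(0) = \frac{F_p(i)}{-2(i+1)\, i^{(p-1)/2}},
\]
so the heart of the matter is the evaluation of $F_p(i)$.

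To compute $F_p(i) = \sum_{a=1}^{p-1} \chi_p(a)\, i^a$, I would split the sum according to $a \bmod 4$, writing $F_p(i) = \sum_{k=0}^{3} i^k\, T_k$ with $T_k = \sum_{0 < 4n+k < p} \chi_p(4n+k)$. Since $p \equiv 1 \pmod 4$ the involution $a \mapsto p - a$ preserves $\chi_p$ and swaps the residue classes $0 \leftrightarrow 1$ and $2 \leftrightarrow 3$ modulo $4$, giving at once $T_1 = T_0$ and $T_3 = T_2$. A further substitution inside $T_2$ --- write $4n+2 = 2(2n+1)$, pull out $\chi_p(2)^2 = 1$, and apply $a \mapsto p - a$ once more --- identifies $T_2$ with $\sum_{p/4 < u < p/2} \chi_p(u)$, which by Lemma \ref{lem:p=1mod4} equals $-S_{14}$, where $S_{14} = \sum_{0 < u < p/4} \chi_p(u)$. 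Meanwhile $T_0 = \chi_p(4)\, S_{14} = S_{14}$. Combining these yields
\[
F_p(i) = (1 + i - i^2 - i^3)\, S_{14} = 2(1+i)\, S_{14}.
\]

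Substituting back gives $g_p(0) = -S_{14}/i^{(p-1)/2}$. To finish, I would note that $i^{(p-1)/2} = (-1)^{(p-1)/4}$, which by the second supplementary law coincides with $\chi_p(2)$: writing $p = 4k+1$, both quantities are $+1$ when $k$ is even (i.e.\ $p \equiv 1 \pmod 8$) and $-1$ when $k$ is odd (i.e.\ $p \equiv 5 \pmod 8$). Hence $g_p(0) = -\chi_p(2)\, S_{14}$, and Berndt's class number identity $S_{14} = \tfrac{1}{2} h(-4p)$ from \cite[Corollary 3.9]{[Berndt]} delivers the claim. I expect the only delicate bookkeeping to be pairing residue classes modulo $4$ under $a \mapsto p-a$ and matching the sign $i^{(p-1)/2}$ with $\chi_p(2)$; all of the arithmetic depth is carried by Berndt's evaluation of the quarter-interval sum.
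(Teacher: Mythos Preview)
Your proposal is correct and follows essentially the same approach as the paper: evaluate at $x=i$ to obtain $g_p(0)=F_p(i)/(-2(i+1)i^{(p-1)/2})$, split $F_p(i)$ into residue classes modulo $4$, use the involution $a\mapsto p-a$ together with the half-sum identity to reduce everything to $S_{14}$, and finish with $i^{(p-1)/2}=(-1)^{(p-1)/4}=\chi_p(2)$ and Berndt's \cite[Corollary 3.9]{[Berndt]}. Your write-up is in fact cleaner than the paper's version, which contains several typographical slips (e.g.\ labeling the quantity $g_p(-1)$ while computing $g_p(0)$), but the argument is the same.
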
 
Next, we will use $\zeta_{6}$ to compute this $g_p(1)$. We have 
\[ g_p(1)=\frac{F_p(\zeta_6)}{\zeta_6^{\frac{p-3}{2}} (1+\zeta_6)(\zeta_6-1)^2}= -\frac{F_p(\zeta_6)}{\zeta_6^{\frac{p-1}{2}}(1+\zeta_6)}.\]
Let us consider the case $p \equiv 1 \pmod{3}$. By the same argument as in the case $p \equiv 3 \pmod{4}$ we have
\begin{align*}
F_{p}(\zeta_6) &=\left(\frac{6}{p} \right) \left[ (1+ \zeta_6)S_{16}+(\zeta_6^2+\zeta_6^5) S_{26}+(\zeta_6^3+\zeta_6^4) S_{36} \right]\\
                       &=(1+\zeta_6) \left(\frac{6}{p} \right) \left[ S_{16}-S_{36} \right].
\end{align*} 
We then have 
\[ g_p(1)=-\left(\frac{6}{p} \right) \frac{S_{16}-S_{36}}{\zeta_{6}^{\frac{p-1}{2}}} =-\left(\frac{6}{p} \right)[S_{16}-S_{36}].\]
By \cite[Theorem 6.1]{[Berndt]} we have 
\[ S_{16}=\frac{1}{2} \left(1+\left(\frac{2}{p} \right) \right) h(-3p) , \] 
and 
\[ S_{36}=- \frac{1}{2}h(-3p) .\] 
Hence 
\begin{align*}
g_p(1) &=-\frac{1}{2} \left(\frac{6}{p} \right) \left(2+\left(\frac{2}{p} \right)  \right) h(-3p) \\
          &=- \frac{1}{2} \left(\frac{p}{3} \right) \left(\frac{6}{p} \right) \left(2+\left(\frac{2}{p} \right)  \right) h(-3p)\\
          &=- \frac{1}{2} \left(\frac{p}{3} \right) \left(\frac{3}{p} \right) \left(\frac{2}{p} \right) \left(2+\left(\frac{2}{p} \right)  \right) h(-3p)\\
          &=-\frac{1}{2}  \left(2 \left(\frac{2}{p} \right) +1  \right) h(-3p)
\end{align*} 
Note that, in the third equality, we use the quadratic reciprocity law 
\[  \left(\frac{p}{3} \right) \left(\frac{3}{p} \right) =1, \] 
as in our case $p \equiv 1\pmod{4}.$ Now, let us consider the case $p \equiv 2 \pmod{3}$. Then we have 
\begin{align*} 
F_{p}(\zeta_6) &=\left(\frac{6}{p} \right) \left[ (1+\zeta_6^5)S_{16}+(\zeta_6^4+\zeta_6) S_{26}+(\zeta_6^3+\zeta_6^2) S_{36} \right] \\
                        &=(1-\zeta_6^2) \left(\frac{6}{p} \right) \left[ S_{16}-S_{36} \right].
\end{align*} 
Therefore 
\[ g_p(1)=-\frac{F_p(\zeta_6)}{\zeta_6^{\frac{p-1}{2}}(1+\zeta_6)}=\left(\frac{6}{p} \right)[S_{16}-S_{36}]=\frac{1}{2} \left(\frac{6}{p} \right) \left(2+\left(\frac{2}{p} \right)  \right) h(-3p) .\]
By the same calculation as in the case $p \equiv 1 \pmod{3}$, the above sum can be simplify to 
\[ g_p(1)=-\frac{1}{2}  \left(2 \left(\frac{2}{p} \right) +1  \right) h(-3p) .\]
We obtain the following proposition. 
\begin{prop} 
Let $p \equiv 1 \pmod{4}$, then 
\[ g_p(1)=-\frac{1}{2}  \left(2 \left(\frac{2}{p} \right) +1  \right) h(-3p) .\]
\end{prop}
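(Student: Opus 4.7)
The plan is to mimic exactly the strategy used in the preceding proposition (the value of $g_p(1)$ for $p \equiv 3 \pmod 4$), but with the modified defining relation $f_p(x) = x^{(p-5)/2} g_p(x+1/x)$ that holds when $p \equiv 1 \pmod 4$. Specializing this at $x = \zeta_6$, together with the formula $f_p(x) = F_p(x)/[x(x+1)(x-1)^2]$, gives
\[
g_p(1) = -\frac{F_p(\zeta_6)}{\zeta_6^{(p-1)/2}(1+\zeta_6)},
\]
so the entire problem reduces to computing $F_p(\zeta_6)$.

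To compute $F_p(\zeta_6)$, I would partition the sum defining $F_p$ according to residue classes modulo $6$ and write it as a linear combination of the six partial sums $\sum_{0<6n+i<p}\!\big(\tfrac{6n+i}{p}\big)$. Because $p \equiv 1 \pmod 4$, the Legendre symbol is invariant under $a \mapsto p-a$, which (unlike in the $p \equiv 3 \pmod 4$ case, where it produced signs) turns the reflection $6n+i \leftrightarrow p - (6n+j)$ into equalities without sign change. Splitting further into the cases $p \equiv 1 \pmod 3$ and $p \equiv 2 \pmod 3$ (and handling each by identifying the matching residue classes modulo $6$), the contributions from four of the six residues pair up, leaving a clean expression of the shape
\[
F_p(\zeta_6) = (1+\zeta_6)\,\Big(\tfrac{6}{p}\Big)\,[\,S_{16} - S_{36}\,]
\quad\text{or}\quad
F_p(\zeta_6) = (1-\zeta_6^2)\,\Big(\tfrac{6}{p}\Big)\,[\,S_{16} - S_{36}\,],
\]
in Berndt's notation $S_{k6} = \sum_{(k-1)p/6 < a < kp/6}\!\big(\tfrac{a}{p}\big)$.

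Next I would substitute the explicit evaluations of $S_{16}$ and $S_{36}$ for $p\equiv 1 \pmod 4$ from \cite[Theorem 6.1]{[Berndt]}, which express them in terms of $h(-3p)$ and quadratic symbols $\big(\tfrac{2}{p}\big),\big(\tfrac{3}{p}\big)$. Combined with the explicit values $\zeta_6^{(p-1)/2} = -1$ (resp.\ $\zeta_6^2$) and the algebraic simplifications $1+\zeta_6 = -\zeta_6^2 \cdot (\text{unit})$ type identities already used in the $p \equiv 3 \pmod 4$ argument, both subcases should collapse to the same answer. The quadratic reciprocity law $\big(\tfrac{p}{3}\big)\big(\tfrac{3}{p}\big) = 1$ (valid because $p \equiv 1 \pmod 4$) is then invoked to rewrite $\big(\tfrac{6}{p}\big)$ as $\big(\tfrac{2}{p}\big)\big(\tfrac{p}{3}\big)$ and absorb the symbol $\big(\tfrac{p}{3}\big)$, yielding the uniform formula claimed in the proposition.

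The main obstacle I expect is the bookkeeping in handling the two subcases $p \equiv 1, 2 \pmod 3$ in parallel: the pairings of residue classes modulo $6$ differ between the two subcases, and one must verify that the prefactors $1+\zeta_6$ and $1-\zeta_6^2$ cancel correctly against $\zeta_6^{(p-1)/2}(1+\zeta_6)$ so that both subcases produce the same sign in the final formula. This is purely algebraic but error-prone, so I would carry it out carefully, using the identities $1 - \zeta_6^2 = -\zeta_6(1+\zeta_6)$ and $\zeta_6^3 = -1$ to bring everything to a common form before substituting the Berndt evaluations.
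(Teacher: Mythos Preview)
Your proposal is correct and follows essentially the same approach as the paper: reduce $g_p(1)$ to $-F_p(\zeta_6)/[\zeta_6^{(p-1)/2}(1+\zeta_6)]$, split $F_p(\zeta_6)$ by residues modulo $6$, use the symmetry $(\tfrac{a}{p})=(\tfrac{p-a}{p})$ to collapse the sum to $(\tfrac{6}{p})[S_{16}-S_{36}]$ times a root-of-unity factor in each subcase $p\equiv 1,2\pmod 3$, apply Berndt's Theorem~6.1, and finish with quadratic reciprocity. One small slip: in the subcase $p\equiv 1\pmod 3$ (so $p\equiv 1\pmod{12}$) one has $\zeta_6^{(p-1)/2}=1$, not $-1$; with the correct value both subcases indeed collapse to the stated formula.
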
 
Finally, let us compute $g_p(-1)$. We have 
\[ g_p(-1)=\frac{F_p(\zeta_6)}{\zeta_3^{\frac{p-3}{2}} (1+\zeta_3)(\zeta_3-1)^2}=\frac{F_p(\zeta_{3})}{3 \zeta_{3}^{\frac{p-3}{2}}}.\]
Let us consider the case $p \equiv 1 \pmod{3}$. By the same argument as above we have
\begin{align*}
F_{p}(\zeta_3) &=\left(\frac{6}{p} \right) \left[ (1+ \zeta_3)S_{16}+(\zeta_3^2+\zeta_3^5) S_{26}+(\zeta_3^3+\zeta_2^4) S_{36} \right]\\
                       &=-\zeta_{3}^2 \left(\frac{6}{p} \right) \left[ S_{16}-2S_{26}+S_{36} \right]\\
                       &=3 \zeta_{3}^2 \left(\frac{6}{p} \right) S_{26}.
\end{align*} 
For the last equality, we use the fact that when $p \equiv 1 \pmod{4}$ 
\[ S_{16}+S_{26}+S_{36}=\sum_{a=1}^{\frac{p-1}{2}} \left(\frac{a}{p} \right)=0.\] 
We conclude that 
\[ g_p(-1)=\left(\frac{6}{p} \right) S_{26} .\]
By \cite[Theorem 6.1]{[Berndt]}, we have 
\[ S_{26}=-\frac{1}{2} \left(\frac{2}{p} \right) h(-3p) .\] 
Therefore 
\[ g_p(-1)=-\frac{1}{2} \left(\frac{3}{p} \right) h(-3p) .\]
Let us consider the case $p \equiv 2 \pmod{3}$. Then we have 

\begin{align*} 
F_{p}(\zeta_3) &=\left(\frac{6}{p} \right) \left[ (1+\zeta_3^5)S_{16}+(\zeta_3^4+\zeta_3) S_{26}+(\zeta_3^3+\zeta_3^2) S_{36} \right] \\
                        &=-\zeta_{3} \left(\frac{6}{p} \right) \left[ S_{16}-S_{36} \right]\\
                        &=3 \zeta_{3} \left(\frac{6}{p} \right) S_{26}.
\end{align*} 
By the same argument as above we see that 
\[ g_p(-1)=-\frac{1}{2} \left(\frac{3}{p} \right) h(-3p) .\]
In summary, we have the following proposition. 
\begin{prop}
Let $p \equiv 1 \pmod{4}$, then 
\[ g_p(-1)=-\frac{1}{2} \left(\frac{3}{p} \right) h(-3p).\]
\end{prop}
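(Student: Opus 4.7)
The plan is to reuse the method applied just above for $g_p(1)$: choose a root of unity $\zeta$ whose trace $\zeta + \zeta^{-1}$ is the target evaluation point, read $g_p(-1)$ off from $F_p(\zeta)$, and decompose $F_p(\zeta)$ into partial sums indexed by residue classes. Since $\zeta_3 + \zeta_3^{-1} = -1$ for $\zeta_3 = e^{2\pi i/3}$, the natural choice is $\zeta = \zeta_3$.

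First, I would combine $f_p(x) = x^{(p-5)/2}\,g_p(x+x^{-1})$ with the definition $f_p(x) = F_p(x)/[x(1-x)^2(x+1)]$ and substitute $x = \zeta_3$. Using the minimal-polynomial identities $1+\zeta_3 = -\zeta_3^2$ and $(1-\zeta_3)^2 = -3\zeta_3$ (both consequences of $1+\zeta_3+\zeta_3^2 = 0$), the cyclotomic factors collapse and one obtains
\[
g_p(-1) \;=\; \frac{F_p(\zeta_3)}{3\,\zeta_3^{(p-3)/2}}.
\]

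Next I would decompose $F_p(\zeta_3) = T_0 + \zeta_3 T_1 + \zeta_3^2 T_2$, where $T_j = \sum_{0<3n+j<p}\bigl((3n+j)/p\bigr)$. To relate the $T_j$, I would apply the involution $a \mapsto p-a$, using that $((p-a)/p) = (a/p)$ when $p \equiv 1 \pmod 4$; this yields \emph{sign-preserving} symmetry relations whose precise form depends on $p \bmod 3$. I would handle the two subcases $p \equiv 1 \pmod 3$ and $p \equiv 2 \pmod 3$ separately, in each case eliminating one of the three $T_j$. To collapse the remaining combination into a single Berndt sum I would invoke the half-sum vanishing $\sum_{a=1}^{(p-1)/2}(a/p) = 0$ from Lemma~\ref{lem:p=1mod4}, equivalent to $S_{16}+S_{26}+S_{36} = 0$. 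This should produce
\[
g_p(-1) \;=\; \left(\frac{6}{p}\right) S_{26}.
\]
Evaluating $S_{26} = -\tfrac{1}{2}(2/p)\,h(-3p)$ by Berndt's Theorem~6.1, together with $(6/p)(2/p) = (3/p)$, yields the claimed formula.

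The hardest part will be the parallel bookkeeping between the two subcases modulo $3$: both $F_p(\zeta_3)$ and the denominator $\zeta_3^{(p-3)/2}$ change shape when $p$ crosses from $1$ to $2$ modulo $3$, so it is not transparent a priori that both branches collapse to the same clean expression $(6/p)S_{26}$. The organizing insight is that the three-term combination of the $T_j$ can only be reduced to a pure multiple of $S_{26}$ after using the half-sum identity specific to $p \equiv 1 \pmod 4$; this is exactly what distinguishes the present case from its $p \equiv 3 \pmod 4$ counterpart and makes the final formula involve $h(-3p)$ rather than $h(-p)$.
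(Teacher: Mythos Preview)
Your approach is correct, and the reduction $g_p(-1)=F_p(\zeta_3)\big/\bigl(3\,\zeta_3^{(p-3)/2}\bigr)$ agrees with the paper. Where you diverge is in the decomposition of $F_p(\zeta_3)$: you propose the mod-$3$ splitting $T_0+\zeta_3 T_1+\zeta_3^2 T_2$, whereas the paper instead substitutes $\zeta_3$ directly into the mod-$6$ formula already obtained for $F_p(\zeta_6)$ in the $g_p(1)$ computation, getting a combination of $S_{16},S_{26},S_{36}$ which the half-sum identity $S_{16}+S_{26}+S_{36}=0$ collapses to a multiple of $S_{26}$. Your mod-$3$ route is more economical for this particular value and is the same method the paper used for $g_p(-1)$ in the $p\equiv 3\pmod 4$ case.

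One wobble in your narrative: working purely with the $T_j$, the identity you actually need is the \emph{full}-sum vanishing $T_0+T_1+T_2=\sum_{a=1}^{p-1}(a/p)=0$, not the half-sum. Combined with the involution $a\mapsto p-a$ (which, since $(\,\cdot\,/p)$ is preserved for $p\equiv 1\pmod 4$, yields $T_1=T_0$ when $p\equiv 1\pmod 3$ and $T_2=T_0$ when $p\equiv 2\pmod 3$), both subcases collapse to
\[
g_p(-1)=-T_0=-\left(\frac{3}{p}\right)\sum_{0<a<p/3}\left(\frac{a}{p}\right),
\]
and one finishes by evaluating this third-sum as $\tfrac12 h(-3p)$ via Berndt. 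The intermediate target $(6/p)S_{26}$ and the half-sum identity $S_{16}+S_{26}+S_{36}=0$ belong to the paper's mod-$6$ computation; a pure mod-$3$ decomposition never produces $S_{26}$ directly, though of course $-(3/p)\sum_{0<a<p/3}(a/p)$ and $(6/p)S_{26}$ coincide numerically.
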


In order to  summarise all the special values $g_p(u)$ with $u=\pm2,\pm1,0$ achieved so far, we collect the previous propositions into a single theorem as follows.
\begin{thm} Let $p\geq 7$ be a prime number.
\begin{enumerate}
    \item If $p\equiv 3\pmod 4$ then
    \[
    \begin{aligned}
        g_p(2)&=ph(-p),\\
        g_p(-2)=g_p(0)&=-\left(2 \left(\frac{2}{p} \right)-1 \right)h(-p),\\
        g_p(1)&= -\frac{1}{2} \left(\frac{6}{p} \right)  \left[6-3 \left(\frac{2}{p} \right) -2 \left(\frac{3}{p} \right)+\left(\frac{6}{p} \right)\right]h(-p),\\
         g_p(-1)&=-\frac{1}{2} \left(\frac{3}{p} \right) h(-3p).
    \end{aligned}
    \]
    \item If $p\equiv 1\pmod 4$ then
    \[
    \begin{aligned}
        g_p(2)&=\frac{p B_{2,\chi_p}}{4},\\
        g_p(-2)&=-\frac{1}{4} \left( 4 \left(\frac{2}{p} \right)-1 \right) B_{2, \chi_p},\\
        g_p(0)&=-\frac{1}{2} \left(\frac{2}{p} \right) h(-4p)\\
        g_p(1)&=-\frac{1}{2}  \left(2 \left(\frac{2}{p} \right) +1  \right) h(-3p),\\
         g_p(-1)&=-\frac{1}{2} \left(\left(\frac{p}{3} \right)+3 \right) h(-p).
    \end{aligned}
    \]
\end{enumerate}
\end{thm}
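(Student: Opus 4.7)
The theorem is a compilation of the five evaluations $g_p(u)$ for $u \in \{\pm 2, \pm 1, 0\}$ already established in the propositions of Section~3, and my plan is simply to explain the uniform strategy that underlies all of them. The common thread is the defining identity
\[
f_p(x) = x^{\deg(f_p)/2}\, g_p\!\left(x + \tfrac{1}{x}\right),
\]
which lets us read off $g_p(u)$ from $f_p(x_0)$ whenever $x_0$ solves $X^2 - uX + 1 = 0$. So for each target $u$ I would pick the most convenient such $x_0$: $x_0 = 1$ for $u=2$, $x_0 = -1$ for $u=-2$, $x_0 = i$ for $u=0$, $x_0 = \zeta_6$ for $u=1$, and $x_0 = \zeta_3$ for $u=-1$. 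Once $f_p(x_0)$ is known, division by the appropriate power of $x_0$ (and possibly a sign coming from $i^{(p-1)/2}$ or $\zeta_n^{(p-1)/2}$) recovers $g_p(u)$.

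For $u = \pm 2$, the values $f_p(\pm 1)$ are indeterminate $0/0$ expressions, so I would take limits: for $u=2$, use $x f_p(x) = F_p(x)/(1-x)$ (or its $p\equiv 1$ analog) and L'H\^opital's rule to land on the first and second moments $\sum_a (a/p)a$ and $\sum_a (a/p)a^2$, which by Lemma~\ref{lem:mult}, (\ref{eq:p=1mod4_moment2}), (\ref{eq:Fp''(1)}), and the class number formula give $ph(-p)$ and $pB_{2,\chi_p}/2$ as appropriate; for $u=-2$, use $F_p(-1)$ (Lemma~\ref{lem:Fp(-1)}) in the case $p\equiv 3\pmod 4$, and $F_p'(-1)$ together with (\ref{eq:halfmoment1}) in the case $p\equiv 1\pmod 4$.

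For $u = 0, \pm 1$ my plan is to expand $F_p(\omega) = \sum_{a=1}^{p-1}(a/p)\omega^a$ by partitioning $a$ modulo the order $n$ of $\omega$ ($n=4,6,3$ for $\omega = i,\zeta_6,\zeta_3$), then use the flip $\left(\tfrac{p-a}{p}\right) = (-1)^{(p-1)/2}\left(\tfrac{a}{p}\right)$ to pair subsums. Each surviving subsum, after substituting $b = (p - (jn+k))/n$, becomes one of Berndt's interval character sums $S_{jn}$ over $(jp/n, (j+1)p/n)$. Invoking Corollary~3.9 and Theorem~6.1 of \cite{[Berndt]} then converts $S_{14}, S_{16}, S_{26}, S_{36}$ into $h(-p), h(-3p), h(-4p)$ with explicit coefficients depending on $(2/p), (3/p), (6/p)$.

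The main obstacle is bookkeeping, not mathematical depth: each evaluation branches by $p \bmod 4$ (through the sign $(-1)^{(p-1)/2}$ and the root-of-unity factor $\omega^{(p-1)/2}$) and, for $\omega = \zeta_3, \zeta_6$, also by $p \bmod 3$, giving up to four subcases per special value. The crucial simplifications that make the final formulas independent of $p \bmod 3$ are (i) the vanishing $S_{16} + S_{26} + S_{36} = 0$ coming from (\ref{eq:halfsum}) in the case $p\equiv 1\pmod 4$, which telescopes $S_{16} - 2S_{26} + S_{36}$ down to $-3S_{26}$, and (ii) the quadratic reciprocity relation $(p/3)(3/p) = 1$ valid when $p\equiv 1\pmod 4$, which absorbs a residual Legendre symbol. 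Once these collapses are applied, the $p \bmod 3$ cases merge into a single formula and the statement of the theorem is assembled directly from the individual propositions.
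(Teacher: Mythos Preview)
Your proposal is correct and matches the paper's approach exactly: the paper explicitly states that the theorem is nothing more than a collection of the individual propositions in Section~3, and your outline accurately describes the uniform strategy behind those propositions (evaluating $f_p$ at $1,-1,i,\zeta_6,\zeta_3$ via limits or residue-class decompositions of $F_p(\omega)$, then invoking the class-number identities from \cite{[Berndt]}). The bookkeeping obstacles and the two simplifications you flag (the vanishing half-sum and quadratic reciprocity for $p\equiv 1\pmod 4$) are precisely the ones the paper works through case by case.
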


Here is a table for the special values of these $g_p(u)$, for $p\leq 23$, at $u=-2, -1, 0, 1, 2$.

\begin{center}
\begin{tabular}{ |c||c|c|c|c|c|c|c|c|} 
 \hline
 $p$ & $g_p(-2)$ & $g_p(-1)$ & $g_p(0)$ & $g_p(1)$ &$g_p(2)$   \\
 \hline
 $7$ &-1 & -2 & -1 & 2 & 7 \\ 
 \hline
 $11$ &3 & -1 & 3 & 3 & 11\\
 \hline 
 $13$ &5 & -2 & 1 & 2& 13\\
 \hline 
 $17$ &-6 & 1 & -2 & -3 & 34 \\
 \hline
 $19$ &3 & -2 & 3 & -6 & 19\\ 
 \hline
 $23$ &-3 & -3 & -3 & -3 & 69\\
 \hline
\end{tabular}
 \end{center}

\section{Galois theory for $f_p$ and $g_p(x)$}  

In this section, we study Galois theory for $f_p$ and $g_p$ for prime $p\geq 7$.  The following lemma is the direct consequence of our previous computations for $g_p(2)$ and $g_p(-2)$.
\begin{lem}
Let $s_p=f_p(1)f_p(-1)=g_p(2)g_p(-2)$. Then 
\[ s_p = \begin{cases}  (1-4 \left(\frac{2}{p} \right))p \left(\frac{B_{2, \chi_p}}{4} \right)^2 &\text { if } p\equiv 1\pmod 4\\
  -(2 \left(\frac{2}{p} \right)-1)p h(-p)^2 &\text { if } p \equiv 3 \pmod 4. \end{cases} \] 
\end{lem}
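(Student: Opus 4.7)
The plan is to observe that the lemma is almost a direct corollary of the special-value formulas for $g_p(\pm 2)$ obtained in the previous theorem; the only preliminary step is to verify that $f_p(1)f_p(-1)$ actually equals $g_p(2)g_p(-2)$, and then to multiply out the two cases.

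First, I would record the degree count so that the substitution $x \mapsto x + 1/x$ is sign-safe. Recall $f_p(x) = x^{s} g_p(x + 1/x)$ with $2s = \deg f_p$; so $s = (p-3)/2$ when $p \equiv 3 \pmod 4$ and $s = (p-5)/2$ when $p \equiv 1 \pmod 4$. In both cases the congruences on $p$ force $p - 3 \equiv 0 \pmod 4$ (respectively $p - 5 \equiv 0 \pmod 4$), so $s$ is even. Consequently
\[
f_p(1) = 1^{s} g_p(2) = g_p(2), \qquad f_p(-1) = (-1)^{s} g_p(-2) = g_p(-2),
\]
which gives the first equality $s_p = f_p(1)f_p(-1) = g_p(2)g_p(-2)$ asserted in the statement.

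Next I would split into the two congruence classes and simply multiply the values from the Theorem. If $p \equiv 3 \pmod 4$, then
\[
g_p(2)\,g_p(-2) = p h(-p) \cdot \Bigl(-\bigl(2\bigl(\tfrac{2}{p}\bigr) - 1\bigr) h(-p)\Bigr) = -\bigl(2\bigl(\tfrac{2}{p}\bigr) - 1\bigr)\, p\, h(-p)^2,
\]
which is the second line of the formula. If $p \equiv 1 \pmod 4$, then
\[
g_p(2)\,g_p(-2) = \frac{p B_{2,\chi_p}}{4}\cdot\Bigl(-\tfrac{1}{4}\bigl(4\bigl(\tfrac{2}{p}\bigr) - 1\bigr)B_{2,\chi_p}\Bigr) = \bigl(1 - 4\bigl(\tfrac{2}{p}\bigr)\bigr)\, p\, \Bigl(\frac{B_{2,\chi_p}}{4}\Bigr)^{2},
\]
matching the first line.

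There is really no main obstacle here, since every ingredient is already in place from the preceding theorem; the only thing to be careful about is the parity of the exponent $s$ so that $f_p(-1) = +g_p(-2)$ rather than $-g_p(-2)$. If that parity check failed the signs in the formula would flip, so I would flag it explicitly as the one nontrivial bookkeeping step before presenting the two one-line multiplications above.
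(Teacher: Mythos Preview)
Your proof is correct and matches the paper's approach: the paper simply states that the lemma is a direct consequence of the previously computed values of $g_p(2)$ and $g_p(-2)$, and you carry out exactly that multiplication. Your explicit parity check on $s$ to justify $f_p(\pm 1)=g_p(\pm 2)$ is a nice bit of care, though the paper had already established those identities in the propositions preceding the summary theorem.
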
 

\begin{cor} \label{cor:equivalance}
The polynomial $f_p$ is irreducible over $\Q$ if and only if $g_p$ is irreducible over $\Q$.
\end{cor}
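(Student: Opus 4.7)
The plan is to exploit the substitution identity $f_p(x)=x^s g_p(x+1/x)$ with $s=\deg g_p$, which was built into the definition of $g_p$, and to note along the way that both $f_p$ and $g_p$ are monic (tracking the leading coefficient of $F_p$ through the divisions defining $f_p$, and then comparing top terms of $x^s g_p(x+1/x)$).

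First, for the easy direction, I would show that a nontrivial factorization of $g_p$ yields one of $f_p$. If $g_p(y)=u(y)v(y)$ in $\Q[y]$ with $0<\deg u,\deg v<s$, then splitting $x^s=x^{\deg u}\cdot x^{\deg v}$ in the substitution identity gives
\[
f_p(x)=\bigl[x^{\deg u}u(x+1/x)\bigr]\bigl[x^{\deg v}v(x+1/x)\bigr],
\]
a genuine factorization of $f_p$ in $\Q[x]$, so $f_p$ is reducible.

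For the other direction I would assume $g_p$ irreducible and establish irreducibility of $f_p$ via a field-degree count. Fix a root $\beta$ of $g_p$, so $[\Q(\beta):\Q]=s$, and let $\alpha$ be any solution of $\alpha+1/\alpha=\beta$; then $\alpha$ is a root of $f_p$ satisfying $x^2-\beta x+1=0$ over $\Q(\beta)$, giving $[\Q(\alpha):\Q(\beta)]\in\{1,2\}$. The whole point is to force this degree to equal $2$, equivalently to show that $\beta^2-4$ is not a square in $\Q(\beta)$. I would attack this by taking norms to $\Q$: since $g_p$ is the monic minimal polynomial of $\beta$,
\[
N_{\Q(\beta)/\Q}(\beta^2-4)=\prod_j(\beta_j^2-4)=g_p(2)g_p(-2)=s_p,
\]
so a square in $\Q(\beta)$ would force $s_p$ to be a rational square, and it is enough to check $s_p\notin(\Q^\times)^2$.

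That last check is the main (and really only) obstacle, and I would dispatch it using the preceding lemma. In both congruence classes $p\bmod 4$ the formula for $s_p$ takes the shape $c\cdot p\cdot r^2$ with $r\in\Q^\times$ nonzero (because $h(-p)$ and $B_{2,\chi_p}$ are nonzero) and $c\in\{-3,-1,3,5\}$ according to the value of $\bigl(\tfrac{2}{p}\bigr)$. For $c<0$ the quantity $s_p$ is negative, hence not a rational square; for $c\in\{3,5\}$ the integer $cp$ is a product of two distinct primes ($p\geq 7$ forces $p\neq 3,5$), so again not a square. Consequently $[\Q(\alpha):\Q(\beta)]=2$ and $[\Q(\alpha):\Q]=2s=\deg f_p$, so the monic minimal polynomial of $\alpha$ over $\Q$ coincides with $f_p$, which is therefore irreducible.
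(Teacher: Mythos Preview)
Your argument is correct. Both directions go through as you describe: the factorization in the easy direction is valid, and in the hard direction your norm computation
\[
N_{\Q(\beta)/\Q}(\beta^2-4)=\prod_j(\beta_j-2)(\beta_j+2)=g_p(2)g_p(-2)=s_p
\]
is right (the two sign factors $(-1)^s$ cancel), and the case analysis showing $s_p\notin(\Q^\times)^2$ is clean.

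The paper takes a slightly different route for the nontrivial direction. It also reduces to the fact that $|s_p|=|f_p(1)|\cdot|f_p(-1)|$ is not a perfect square, but then, rather than running your norm/degree-count argument directly, it observes that consequently one of $|f_p(1)|$, $|f_p(-1)|$ is not a square and invokes an irreducibility criterion for reciprocal polynomials from Cafure--Cesaratto (\cite[Theorem~11]{[CC]}). Your proof is more self-contained: in effect you are reproving the relevant instance of that criterion by the transparent observation that a square $\beta^2-4$ in $\Q(\beta)$ would force a square norm in $\Q$. The paper's version is terser but black-boxes the mechanism; yours exposes exactly why the non-squareness of $s_p$ is what matters, and it dovetails nicely with the later Proposition on $\sqrt{s_p}$ lying in the splitting field.
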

\begin{proof}
It is clear that if $g_p$ is reducible over $\Q$ then $f_p$ is reducible over $\Q$.
Now we suppose that $g_p$ is irreducible over $\Q$. 
By the above lemma, we see that $|s_p| = |f_p(1)|\cdot|f_p(-1)|$ is never a square in $\Q$. Hence $|f_p(1)|$ or $|f_p(-1)|$ are not perfect squares. By  \cite[Theorem 11]{[CC]} we conclude that $f_p$ is irreducible.
\end{proof}

We have the following proposition. 
\begin{prop} \label{prop:s_p}
$\sqrt{s_p}$ belongs the splitting field of $f_p$.
\end{prop}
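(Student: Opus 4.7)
The plan is to exploit the reciprocal structure of $f_p$ to exhibit an explicit square root of $s_p$ as a product of root differences that manifestly lies in the splitting field of $f_p$. Since $f_p$ is a monic reciprocal polynomial of even degree $2n$, its roots come in pairs $\alpha_i,\alpha_i^{-1}$, so I would write
\[
f_p(x)=\prod_{i=1}^{n}(x-\alpha_i)(x-\alpha_i^{-1})=\prod_{i=1}^n(x^2-\beta_i x+1),\qquad \beta_i=\alpha_i+\alpha_i^{-1}.
\]
Evaluating at $x=1$ and $x=-1$ gives $f_p(1)f_p(-1)=\prod_{i=1}^n(2-\beta_i)(2+\beta_i)=\prod_{i=1}^n(4-\beta_i^2)$.

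The decisive algebraic identity is
\[
4-\beta_i^2=4-(\alpha_i+\alpha_i^{-1})^2=-(\alpha_i-\alpha_i^{-1})^2,
\]
so that
\[
s_p=f_p(1)f_p(-1)=(-1)^n\Bigl(\prod_{i=1}^n(\alpha_i-\alpha_i^{-1})\Bigr)^{2}.
\]
Thus $\sqrt{s_p}$ will lie in the splitting field of $f_p$ as soon as the sign $(-1)^n$ equals $+1$.

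Next I would verify that $n=\deg(f_p)/2$ is always even. When $p\equiv 3\pmod 4$, writing $p=4k+3$ gives $\deg(f_p)=p-3=4k$, hence $n=2k$. When $p\equiv 1\pmod 4$, writing $p=4k+1$ gives $\deg(f_p)=p-5=4(k-1)$, hence $n=2(k-1)$. In both cases $n$ is even, so $s_p=\bigl(\prod_{i=1}^n(\alpha_i-\alpha_i^{-1})\bigr)^2$ and one of the two square roots is the product $\prod_{i=1}^n(\alpha_i-\alpha_i^{-1})$, which lies in the splitting field of $f_p$ since every $\alpha_i$ does.

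There is essentially no obstacle here beyond recognizing the right identity; the only point that could trip one up is the parity of $n$, which is why the sign $(-1)^n$ has to be tracked. If one preferred to avoid the parity check altogether, an alternative is to observe that $\sqrt{s_p}=\sqrt{g_p(2)g_p(-2)}$ and work instead with the roots $\beta_i$ of $g_p$, writing $s_p=\prod(4-\beta_i^2)=\prod(2-\beta_i)\cdot\prod(2+\beta_i)$ and combining each pair; but the cleanest formulation is the one above via the roots of $f_p$.
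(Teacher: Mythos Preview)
Your proof is correct and coincides with the paper's second (direct) proof: both exploit that for each reciprocal pair $\alpha_i,\alpha_i^{-1}$ of roots of $f_p$ the element $\alpha_i-\alpha_i^{-1}=\pm\sqrt{u_i^2-4}$ lies in the splitting field, and hence so does the product $\sqrt{s_p}$. The paper sidesteps your parity check by observing that each factor $\sqrt{u_i^2-4}$ is individually in $\Q(f_p)$ (as the square root of the discriminant of $x^2-u_ix+1$, whose roots lie in $\Q(f_p)$), so their product is too; you instead collect everything into $s_p=(-1)^n\bigl(\prod_i(\alpha_i-\alpha_i^{-1})\bigr)^2$ and then dispose of the sign by verifying $n$ is even. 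This is purely organizational. The paper also records a first proof via the general discriminant identity $\Delta(f)=s\,\Delta(g)^2$ for reciprocal $f$ of degree $2n$ with $s=(-1)^n f(1)f(-1)$, deducing the result from the fact that $\sqrt{\Delta(f_p)}$ always lies in the splitting field; this is a more structural packaging of the same underlying computation.
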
 
We provide two proofs for this proposition. The first proof uses the following observation which is interesting on its own. We are grateful to Professor Arturas Dubickas for alerting us that the result was already known. See the following interesting references \cite[page 127]{[AV]}, \cite[page 85]{[Dubickas]} and \cite[page 51]{[CM]}, where the statement was observed and proved. 
\begin{prop} \label{prop: abstract}
Let $f$ be a reciprocal polynomial of even degree $2n$ over a field of characteristics different from $2$. Let $g$ be the polynomial of degree $n$ such that 
\[ f(x)=x^n g(u) ,\]
where $u=x+\dfrac{1}{x}$.  Let $s=(-1)^n f(1) f(-1)$. Then 
\[ \Delta(f)= s \times \Delta(g)^2 ,\]
where $\Delta(f)$ is the discriminant of a monic polynomial $f$. Recall that $\Delta(f)$ is defined to be 
\[ \Delta(f)=\prod_{i < j} (z_i-z_j)^2 ,\]
with $z_i$ are all the roots of $f$. In particular, $\sqrt{s}$ belongs to the splitting field of $f$.

\end{prop}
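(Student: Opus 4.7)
The plan is to expand the discriminant of $f$ by arranging the roots in reciprocal pairs and recognising the two factors on the right-hand side as two natural sub-products. Let $\{z_1, 1/z_1, \dots, z_n, 1/z_n\}$ be the roots of $f$ in an algebraic closure, so that $u_i := z_i + 1/z_i$ are the roots of the (monic) polynomial $g$. Splitting the unordered pairs that appear in
\[
\Delta(f) = \prod_{1\le k<l\le 2n}(r_k - r_l)^2
\]
into the $n$ within-pair differences $(z_i - 1/z_i)^2$ and the $\binom{n}{2}$ cross blocks of four differences each, I would compute the two groups separately and then reassemble.

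For the within-pair part, the identity $(z_i - 1/z_i)^2 = u_i^2 - 4 = (u_i - 2)(u_i + 2)$ together with $g(u) = \prod_i (u - u_i)$ gives
\[
\prod_{i=1}^n (z_i - 1/z_i)^2 = \prod_{i=1}^n (u_i - 2)(u_i + 2) = (-1)^n g(2)\cdot(-1)^n g(-2) = g(2)g(-2).
\]
For each cross block, a direct computation shows
\[
(z_i - z_j)(z_i - 1/z_j)(1/z_i - z_j)(1/z_i - 1/z_j) = \frac{(z_i - z_j)^2 (z_i z_j - 1)^2}{(z_i z_j)^2} = (u_i - u_j)^2,
\]
because $u_i - u_j = (z_i - z_j)(z_i z_j - 1)/(z_i z_j)$. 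Squaring (the discriminant takes each pairwise difference squared) and multiplying over $i < j$ produces $\prod_{i<j}(u_i - u_j)^4 = \Delta(g)^2$. Combining the two contributions yields $\Delta(f) = g(2)g(-2)\cdot \Delta(g)^2$.

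To finish, I would evaluate $f(x) = x^n g(x + 1/x)$ at $x = \pm 1$: one finds $f(1) = g(2)$ and $f(-1) = (-1)^n g(-2)$, hence $s = (-1)^n f(1) f(-1) = g(2)g(-2)$, and the identity $\Delta(f) = s\,\Delta(g)^2$ follows. For the last assertion, note that $\sqrt{\Delta(f)} = \prod_{k<l}(r_k - r_l)$ lies in the splitting field $L$ of $f$ by construction, and each $u_i \in L$, so $\Delta(g) \in L$; therefore $\sqrt{s} = \pm \sqrt{\Delta(f)}/\Delta(g) \in L$ (assuming $g$ separable; otherwise both sides of the identity vanish and the conclusion is trivial). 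I do not expect a genuine conceptual obstacle — the argument is essentially bookkeeping — but the delicate steps are tracking the $(-1)^n$ signs when converting between $g(c)$ and $\prod_i(u_i - c)$, and pinpointing where char $\neq 2$ is used, namely to make the factorisation $u_i^2 - 4 = (u_i - 2)(u_i + 2)$ non-degenerate.
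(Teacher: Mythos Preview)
Your argument is correct and follows essentially the same route as the paper: both proofs arrange the roots of $f$ in reciprocal pairs, compute the within-pair contribution as $\prod_i(u_i^2-4)=g(2)g(-2)$ and the cross-pair contribution as $\prod_{i<j}(u_i-u_j)^4=\Delta(g)^2$, and then identify $g(2)g(-2)$ with $s$. The only cosmetic difference is that the paper handles the cross block via the expansion $(x_{j1}^2-u_ix_{j1}+1)(x_{j2}^2-u_ix_{j2}+1)=(u_i-u_j)^2$, whereas you use the factorisation $u_i-u_j=(z_i-z_j)(z_iz_j-1)/(z_iz_j)$ directly.
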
 
\begin{proof}
As above, let $\{u_1, \ldots, u_n \}$ are the roots of $g$. For each $u_i$, the is a corresponding quadratic equation 
\[ u_i=x+\frac{1}{x}. \]
The above equation can be rewritten as 
\[ x^2-u_ix+1 =0 .\] 
Let $x_{i1}, x_{i2}$ be the two roots of this equation. Then, the set $\{x_{i1}, x_{i2} \}_{i=1}^{n}$ is the set of all roots of $f(x)$. We will order this set using the lexicographical order on the product $\{1, 2, \ldots, n \} \times \{1, 2 \}$.  We have the following identities 
\[ x_{i1}+x_{i2}=u_i, x_{i1} x_{i2}=1 , \forall 1 \leq i \leq n .\] 
Let $1 \leq i<j \leq n$, then there are four roots associated with these two indices namely $\{x_{i1}, x_{i2}, x_{j1}, x_{j2} \}$. The term appeared in the discriminant of $f$ associated with these four roots is 
\begin{align*}
(x_{j1}-x_{i1})^2(x_{j1}-x_{i2})^2 (x_{j2}-x_{i1})^2(x_{j2}-x_{i2})^2  &=[(x_{j1}-x_{i1})(x_{j1}-x_{i2})]^2  [(x_{j2}-x_{i1})^2(x_{j2}-x_{i2})]^2 \\
                           &=\left[(x_{j1}^2-u_ix_{j1}+1)(x_{j2}^2-u_i x_{j2}+1) \right]^2.
\end{align*} 
Using the property that $x_{j1} x_{j2}=1$ and $x_{j1}+x_{j2}=u_j$, we can see that 
\begin{align*}
(x_{j1}^2-u_ix_{j1}+1)(x_{j2}^2-u_i x_{j2}+1) &=2-2 u_i (x_{j1}+x_{j2})+x_{j1}^2+x_{j2}^2+u_{j}^2\\
        &= 2-2u_iuj+(u_i^2-2)+u_j^2\\
        &=(u_i-u_j)^2.
\end{align*}

Therefore, we have 
\[ (x_{j1}-x_{i1})^2(x_{j1}-x_{i2})^2 (x_{j2}-x_{i1})^2(x_{j2}-x_{i2})^2 =(u_i-u_j)^4. \] 

Note that when $i=j$, we also have the term 
\[ (x_{i2}-x_{i1})^2=u_i^2-4 .\]

In particular, we have 
\begin{align*}
\prod_{i=1}^{n} (x_{i2}-x_{i1})^2 &=\prod_{i=1}^n(u_i^2-4)=\prod_{i-1}^n (2-u_i)(-2-u_i)\\
                         &= \prod_{i=1}^n(2-u_i) \times \prod_{i=1}^n(-2-u_i)\\
                         &=g(2)g(-2)=(-1)^n f(1)f(-1)=s.
\end{align*}
From these computations, we conclude that 
\[ \Delta(f)=s \times \Delta(g)^2 .\] 

Finally note that $\sqrt{\Delta(f)}$ belongs to the splitting field of $f$. By the above relation, we can conclude that $\sqrt{s}$ belongs to the splitting field of $f$ as well.  

\end{proof}

The second proof is quite similar to the first proof. We actually found the second proof first through some numerical computations with small prime $p$. For the sake of completeness, we include it here. The proof will be almost identical for the two cases $p \equiv 3 \pmod{4}$ and $p \equiv 1 \pmod{4}$. Therefore, we only provide our proof in the case $p \equiv 3 \pmod{4}$.   
\begin{proof}
Let $\Q(f_p)$ be the splitting field of $f_p$.
Let $\{u_i \}_{i=1}^{\frac{p-5}{2}}$ be the roots of $g_p(x)$. By definition of $u$, we know that for all $1 \leq i \leq \frac{p-5}{2}$, we have $u_i \in \Q(f_p)$. 
 Furthermore, for each $1 \leq i \leq \frac{p-5}{2}$, the roots of the following equation are also in $\Q(f_p)$ 
\[ u_i=x+\frac{1}{x}. \]
Hence $\sqrt{u_i^2-4} \in \Q(f_p)$. In particular, $\sqrt{s_p} \in \Q(f_p)$ where 
\[ s_p= \prod_{i=1}^{\frac{p-5}{2}} (u_i^2-4). \] 
Let us compute $s_p$. We notice that 
\[ s_p=\prod_{i=1}^{\frac{p-5}{2}} (u_i^2-4) =\prod_{i=1}^{\frac{p-5}{2}} (2-u_i) \times \prod_{i=1}^{\frac{p-5}{2}} (-2-u_i)=g_p(2)g_p(-2)=f_p(1)f_p(-1). \] 
This completes the proof. 
\end{proof}

We have the following immediate corollaries. 
\begin{cor}
Let $p \equiv 3 \pmod{4}$. Let $h=-(2 \left(\frac{2}{p} \right)-1)p$. Then $\sqrt{h}$ belongs to the splitting field of $f_p$. 
\end{cor}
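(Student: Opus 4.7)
The plan is to combine Proposition~\ref{prop:s_p} with the explicit formula for $s_p$ given in the lemma immediately preceding Corollary~\ref{cor:equivalance}. In the case $p\equiv 3\pmod 4$, that lemma yields
\[
s_p = -\left(2\left(\frac{2}{p}\right)-1\right)p\,h(-p)^2 = h\cdot h(-p)^2.
\]
So $s_p$ differs from $h$ by the square of the rational integer $h(-p)$.

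First I would invoke Proposition~\ref{prop:s_p} to conclude that $\sqrt{s_p}$ lies in the splitting field $\Q(f_p)$ of $f_p$. Next I would observe that $h(-p)$ is a positive integer (in particular a nonzero rational number, so it already lies in every number field), and hence
\[
\sqrt{h} = \frac{\sqrt{s_p}}{h(-p)} \in \Q(f_p),
\]
which is exactly the assertion of the corollary.

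There is essentially no obstacle: the only thing one must notice is that $h(-p)\neq 0$, which is immediate since class numbers of imaginary quadratic fields are positive integers. Thus the corollary is a direct substitution of the explicit expression for $s_p$ into Proposition~\ref{prop:s_p}, with the square factor $h(-p)^2$ stripped off.
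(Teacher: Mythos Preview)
Your proposal is correct and is exactly the argument the paper intends: the corollary is stated as an ``immediate corollary'' of Proposition~\ref{prop:s_p} together with the explicit formula $s_p=-\bigl(2(\tfrac{2}{p})-1\bigr)p\,h(-p)^2$, and your proof simply spells out that immediate deduction.
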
 

\begin{cor}
Let $p \equiv 1 \pmod{4}$. Let $h=(1-4 \left(\frac{2}{p} \right))p$. Then $\sqrt{h}$ belongs to the splitting field of $f_p$. 
\end{cor}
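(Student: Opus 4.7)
The plan is to deduce this corollary directly from Proposition~\ref{prop:s_p} together with the explicit formula for $s_p$ given in the lemma immediately preceding it. The previous corollary for $p\equiv 3\pmod 4$ is proved by exactly the same mechanism, so the argument here is parallel.

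First I would recall that, by the lemma, when $p\equiv 1\pmod 4$ one has
\[
s_p \;=\; \left(1-4\left(\tfrac{2}{p}\right)\right) p \left(\tfrac{B_{2,\chi_p}}{4}\right)^2 \;=\; h\cdot\left(\tfrac{B_{2,\chi_p}}{4}\right)^2.
\]
Next, I would invoke Proposition~\ref{prop:s_p} to conclude that $\sqrt{s_p}$ lies in the splitting field $\Q(f_p)$ of $f_p$. Since $B_{2,\chi_p}$ is a nonzero rational number (it is rational because $\chi_p$ is rational-valued, and it is nonzero by~(\ref{eq:generalizedBernoulli}) applied with $\delta_{\chi_p}=0$), the factor $\tfrac{B_{2,\chi_p}}{4}$ is a nonzero element of $\Q$. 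Consequently
\[
\sqrt{h} \;=\; \pm\,\frac{4}{B_{2,\chi_p}}\,\sqrt{s_p} \;\in\; \Q(f_p),
\]
which is exactly what we want.

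There is really no obstacle here: the only subtlety is making sure $B_{2,\chi_p}\neq 0$ so that we can divide by it, and this non-vanishing has already been established in Section~2 as a consequence of the general fact~(\ref{eq:generalizedBernoulli}) about generalized Bernoulli numbers with the correct parity. Thus the corollary is an immediate repackaging of Proposition~\ref{prop:s_p} via the closed form for $s_p$.
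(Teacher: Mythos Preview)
Your proof is correct and follows exactly the reasoning the paper intends: the corollary is stated in the paper as an ``immediate corollary'' of Proposition~\ref{prop:s_p} and the lemma computing $s_p$, with no separate proof given. You have simply spelled out the one-line deduction (and appropriately noted the nonvanishing and rationality of $B_{2,\chi_p}$), which is precisely what the paper leaves implicit.
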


Let us keep the same notations in Proposition  \ref{prop: abstract} and its proof. Furthermore, let $\Q(f)$, $\Q(g)$ be the splitting fields of $f$ and $g$ respectively. Then we have 
\[ \Q(g)=\Q(u_1, \ldots, u_n),\] 
and 
\[ \Q(f)=\Q(g)[x_{i1}, x_{i2}| 1 \leq i \leq n ]. \] 

Note that $x_{i1}, x_{i2}$ are roots of a quadratic equation with coefficients in $\Q(g)$, namely 
\[ x^2-u_ix+1=0 .\] 
We therefore can see that 
\[ [\Q(g)[x_{i1},x_{i2}]:\Q(g)] \leq 2. \] 
Consequently 
\[ [\Q(f):\Q(g)]=[\Q(g)[x_{i1}, x_{i2}| 1 \leq i \leq n ] :\Q(g)] \leq \prod_{i=1}^n [\Q(g)[x_{i1},x_{i2}]:\Q(g)] \leq 2^n.  \] 
The following is an immediate consequence of the above estimate and the fact that $\deg(g)=n$.
\begin{cor} \label{cor:max_Galois}
Let $f,g$ be as in Proposition \ref{prop: abstract},  then 
\[ n! \geq [\Q(g):\Q] \geq \frac{[\Q(f):\Q]}{2^n}. \] 
In particular, if $n!= \dfrac{[\Q(f):\Q]}{2^n}$ then $\Q(g)/\Q$ is a Galois extension with Galois group $S_n$. Additionally, $\Q(f)/\Q$ is a Galois extension of degree $2^n n!$.
\end{cor}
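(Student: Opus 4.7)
The plan is to derive the chain of inequalities from tower arguments already in place, then exploit equality cases. First, since $\Q(g)$ is the splitting field of the degree $n$ polynomial $g$ over $\Q$, the Galois group $\mathrm{Gal}(\Q(g)/\Q)$ embeds into $S_n$ via its action on the roots $u_1,\dots,u_n$, so $[\Q(g):\Q]\le n!$. This gives the upper bound. For the lower bound, I would apply the tower law to $\Q\subseteq \Q(g)\subseteq \Q(f)$, using the estimate $[\Q(f):\Q(g)]\le 2^n$ established in the paragraph just before the corollary (obtained from the fact that each pair $x_{i1},x_{i2}$ satisfies a quadratic over $\Q(g)$). Dividing yields $[\Q(g):\Q]\ge [\Q(f):\Q]/2^n$.

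For the second assertion, I would assume $n!=[\Q(f):\Q]/2^n$. Combined with the lower bound just obtained, this forces $[\Q(g):\Q]\ge n!$, and the upper bound forces equality $[\Q(g):\Q]=n!$. Since $\mathrm{Gal}(\Q(g)/\Q)$ is a subgroup of $S_n$ of order $n!$, it must equal $S_n$; the extension $\Q(g)/\Q$ is automatically Galois because it is the splitting field of the separable polynomial $g$ (we are in characteristic zero).

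Finally, $\Q(f)/\Q$ is Galois for the same reason — it is the splitting field over $\Q$ of the separable polynomial $f$. Its degree is computed from the tower: $[\Q(f):\Q]=[\Q(f):\Q(g)]\cdot[\Q(g):\Q]$. The second factor equals $n!$ by the previous step, and the first factor is at most $2^n$ by the quadratic extension argument; but the hypothesis $[\Q(f):\Q]=2^n\cdot n!$ forces $[\Q(f):\Q(g)]=2^n$ exactly, giving the claimed degree $2^n\,n!$.

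I do not foresee a genuine obstacle: every ingredient (the embedding into $S_n$, the $\le 2^n$ bound, and automatic Galoisness of splitting fields in characteristic zero) is either standard or explicitly recorded in the preceding discussion, so the proof is indeed a bookkeeping exercise in the tower law combined with the pigeonhole principle on orders of subgroups of $S_n$.
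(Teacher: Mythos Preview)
Your proposal is correct and is exactly the argument the paper has in mind: the paper records the estimate $[\Q(f):\Q(g)]\le 2^n$ in the paragraph preceding the corollary and then states the result as an immediate consequence, and you have simply spelled out the tower-law and subgroup-of-$S_n$ bookkeeping that makes it so.
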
 

Using the computer program PARI, we found that for $p \leq 43$, it is always the case that 
\[ [\Q(f_p):\Q]=2^{h_p} (h_p)! ,\]
with $h_p=\dfrac{\deg(f_p)}{2}=\deg(g_p)$. By Corollary  \ref{cor:max_Galois}, we conclude that 
\begin{prop}
Let $p$ be a prime number such that $ p \leq 43$. Then $\Q(g_p)/\Q)$ is a Galois extension with Galois group $S_{h_p}$ where $h_p=\deg(g_p)$. Additionally, $\Q(f_p)/\Q$ is a Galois extension of degree $2^{h_p} (h_p)!$
\end{prop}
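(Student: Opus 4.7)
The plan is to verify the hypothesis of Corollary~\ref{cor:max_Galois}, namely the equality $[\Q(f_p):\Q] = 2^{h_p}(h_p)!$, for each prime $p$ in the finite list $\{7, 11, 13, 17, 19, 23, 29, 31, 37, 41, 43\}$. Once this equality is established, Corollary~\ref{cor:max_Galois} immediately delivers both conclusions: $\Q(g_p)/\Q$ is Galois with Galois group isomorphic to $S_{h_p}$, and $\Q(f_p)/\Q$ is Galois of degree $2^{h_p}(h_p)!$. Since the statement itself is a finite computational claim, the proof is necessarily computer-assisted.

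The steps I would take, for each fixed $p$ in the range, are the following. First, form the reduced Fekete polynomial $g_p(x) \in \Z[x]$ (explicit formulas are already tabulated for $p \leq 23$ and are produced in the same manner for $p = 29, 31, 37, 41, 43$). Second, invoke a computer algebra system such as PARI/GP to compute $[\Q(g_p):\Q]$ and the Galois group of $g_p$ over $\Q$; the expectation is that $g_p$ is irreducible of degree $h_p$ with Galois group $S_{h_p}$, so that $[\Q(g_p):\Q] = h_p!$. Third, compute $[\Q(f_p):\Q]$ directly (or else bound it from below by $2^{h_p}(h_p)!$ using the structure of $\Q(f_p)$ over $\Q(g_p)$ as an iterated sequence of quadratic extensions $\Q(g_p)(\sqrt{u_i^2 - 4})$, and check that each such adjunction strictly doubles the degree). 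Fourth, confirm numerically that $[\Q(f_p):\Q] = 2^{h_p}(h_p)!$ and apply Corollary~\ref{cor:max_Galois}.

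The main obstacle is computational rather than conceptual. For $p = 43$ one has $h_p = 20$, so the splitting field $\Q(f_p)$ has degree $2^{20} \cdot 20!$, which is far too large to realize explicitly. The practical workaround is to never build the splitting field directly: instead compute the Galois group of $g_p$ (PARI's \texttt{polgalois} and more recent implementations handle degrees up to the range we need), then verify the $h_p$ independent quadratic extensions by checking that the elements $u_i^2 - 4$ are pairwise non-square-equivalent modulo $(\Q(g_p)^\times)^2$, which can be carried out via norm and discriminant calculations over well-chosen specializations. A cleaner shortcut, given that the Galois group of $g_p$ turns out to be $S_{h_p}$, is to note that the hyperoctahedral wreath product $(\Z/2\Z)^{h_p} \rtimes S_{h_p}$ is the maximal possible Galois group of a reciprocal polynomial of degree $2h_p$ in the sense of Corollary~\ref{cor:max_Galois}, so it suffices to exhibit any specialization or reduction modulo a good prime $\ell$ in which this full group is realized; one such witness is enough to force equality.

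In summary, the proposition is proved by applying Corollary~\ref{cor:max_Galois} to the numerically verified equality $[\Q(f_p):\Q] = 2^{h_p}(h_p)!$, with the verification carried out by computing the Galois group of the lower-degree polynomial $g_p$ and then checking the independence of the $h_p$ quadratic extensions that generate $\Q(f_p)$ over $\Q(g_p)$.
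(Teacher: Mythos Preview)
Your proposal is correct and follows essentially the same approach as the paper: verify computationally, for each prime $p$ in the finite list, the equality $[\Q(f_p):\Q] = 2^{h_p}(h_p)!$, and then invoke Corollary~\ref{cor:max_Galois} to obtain both conclusions. The paper simply reports that PARI confirmed this degree equality directly for $p\leq 43$, without the additional workaround strategies you sketch (computing $\Gal(g_p)$ first and then certifying the independence of the quadratic extensions); your caution about not constructing the splitting field explicitly is sensible, but in practice PARI determines the Galois group of the degree-$2h_p$ polynomial $f_p$ without ever building that field.
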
 
By this proposition, it is reasonable to make the following conjecture. 

\begin{conj} \label{conj1}
 $\Q(g_p)/\Q$ is a Galois extension with Galois group $S_{h_p}$ where $h_p=\deg(g_p)$.
\end{conj}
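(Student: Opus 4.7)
The plan is to follow the classical three-step program for showing the Galois group of an irreducible degree-$n$ polynomial over $\Q$ equals $S_n$: irreducibility of the polynomial (which gives transitivity of the Galois action), the existence of a transposition in the Galois group, and primitivity of the action. By Corollary \ref{cor:equivalance}, irreducibility of $g_p$ is equivalent to irreducibility of $f_p$, so I would attack both versions in tandem. For the irreducibility step, one strategy is to combine the information that $|g_p(2)|$ equals $p\cdot h(-p)$ or $p|B_{2,\chi_p}|/4$ (i.e.\ almost prime for a generic $p$) with the explicit class-number-theoretic values of $g_p(u)$ at $u = -2, -1, 0, 1, 2$ computed in Section 3; any proper factorization $g_p = g_1 g_2$ over $\Z$ would force these special values to split multiplicatively, imposing stringent arithmetic constraints on $g_1, g_2$. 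A second, more robust approach would be to study factorizations of $F_p(x) \bmod \ell$ for small primes $\ell \ne p$, exploiting the action of $(\Z/p\Z)^\times$ on the coefficients, and to rule out factorizations with prescribed degree patterns.

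To produce a transposition in $G := \Gal(\Q(g_p)/\Q)$, I would search for a prime $\ell \ne p$ at which $g_p \pmod \ell$ factors as one distinct irreducible quadratic times distinct linear factors; the Frobenius at $\ell$ is then a transposition. By Chebotarev's density theorem, once one knows $G$ contains a transposition the set of such $\ell$ has positive density, but a priori one still needs to exhibit one. A natural route is to connect the reduction of $g_p$ modulo $\ell$ to the Frobenius action on the values of $\chi_p$, via the identity $f_p(x) = x^{h_p} g_p(x + 1/x)$ and the relation between roots of $F_p$ and Gauss-period-like quantities.

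For primitivity, the cleanest route is to exhibit an $r$-cycle in $G$ with $h_p/2 < r < h_p$ and $r$ prime; by a classical theorem of Jordan this forces $G$ to be $2$-transitive and hence primitive, and a primitive subgroup of $S_{h_p}$ containing a transposition is all of $S_{h_p}$. Once more this reduces to a Chebotarev-style search, this time for an $\ell$ at which $g_p$ has an irreducible factor of the desired prime degree. Alternatively, one could attempt to rule out block systems directly by analyzing the symmetric functions in the roots $u_i$ appearing in the coefficients of $g_p$ and showing they are sufficiently ``generic''. Throughout, one should keep track of the discriminant of $g_p$ via the identity $\Delta(f_p) = s_p \cdot \Delta(g_p)^2$ from Proposition \ref{prop: abstract}, which at minimum rules out the alternating group whenever $\Delta(g_p)$ is shown to be a non-square.

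The main obstacle, I expect, is the irreducibility step. Irreducibility of one-parameter families of polynomials arising from $L$-function coefficients is notoriously difficult; the standard techniques (Eisenstein, Newton polygons, coefficient bounds combined with Gauss's lemma) do not obviously apply uniformly in $p$, and one may ultimately need either a deeper input such as a Galois-theoretic rigidity argument, or an analytic lower bound on the special values of $g_p$ computed in Section 3 that is strong enough to exclude the divisibility patterns forced by a hypothetical factorization.
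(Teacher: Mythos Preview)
The statement you are attempting to prove is labeled as a \emph{conjecture} in the paper, and the paper does not prove it. What the paper offers is computational evidence: for $p\le 43$ the degree $[\Q(f_p):\Q]$ is computed directly and seen to equal $2^{h_p}(h_p)!$, forcing $\Gal(\Q(g_p)/\Q)\cong S_{h_p}$ via Corollary~\ref{cor:max_Galois}; for $7<p\le 1600$ the paper verifies the hypotheses of Proposition~\ref{prop:Galois_computation} by exhibiting a triple of primes $(q_1,q_2,q_3)$ at which $g_p$ reduces, respectively, to an irreducible polynomial, a linear times an irreducible of degree $h_p-1$, and a quadratic times a product of distinct odd-degree irreducibles. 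No unconditional argument valid for all $p$ is given or claimed.

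Your proposal is therefore not in competition with a proof in the paper but rather a sketch of a possible attack on an open problem. The ingredients you list---transitivity via irreducibility, a transposition from a suitable reduction, and primitivity via a long prime cycle (Jordan)---are close in spirit to the paper's Proposition~\ref{prop:Galois_computation}, which instead uses an $h_p$-cycle, an $(h_p-1)$-cycle, and a transposition. Both are standard Dedekind--Chebotarev templates; the difference is that the paper is content to verify the existence of the required primes case by case, whereas you are aiming for a uniform proof. You correctly identify the main obstruction: none of your steps (irreducibility of $g_p$, existence of a reduction yielding a transposition, existence of a reduction yielding a long prime cycle) is established unconditionally, and the arithmetic constraints you propose to extract from the special values $g_p(-2),\ldots,g_p(2)$ do not by themselves force irreducibility. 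As written, your proposal is a reasonable research plan but not a proof, and the conjecture remains open.
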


We provide some further evidence for Conjecture \ref{conj1}. Since it is computationally challenging to compute the degree of $\Q(g_p)$ in general, we develop another strategy to show that $\Q(g_p)/\Q \cong S_{h_p}$ where $h_p=\deg(g_p).$ This strategy is based on the following observation. 

\begin{prop} \label{prop:Galois_computation}
Let $f(x)$ be a monic polynomial with integer coefficients of degree $n$. Assume that there exists a triple of prime numbers $(q_1, q_2, q_3)$ such that 
\begin{enumerate}
    \item $f(x)$ is irreducible in $\F_{q_1}[x]$. 
    \item $f(x)$ has the following factorization in $\F_{q_2}[x]$ 
    \[ f(x)=(x+c)h(x), \]
    where $c \in \F_{q_2}$ and $h(x)$ is an irreducible polynomial of degree $n-1$. 
    \item $f(x)$ has the following factorization in $\F_{q_3}[x]$ 
    \[ f(x)=m_1(x)m_2(x), \]
    where $m_1(x)$ is an irreducible polynomial of degree $2$ and $m_2(x)$ is a product of distinct irreducible polynomials of odd degrees. 
    
\end{enumerate}
Then the Galois group of $\Q(f)/\Q$ is $S_n.$
\end{prop}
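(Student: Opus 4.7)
The plan is to invoke the Frobenius/Dedekind factorization principle: for a monic $f\in\Z[x]$ with splitting field $L/\Q$ and Galois group $G\hookrightarrow S_n$ (acting on the $n$ roots), whenever a prime $q$ is such that $\bar f\in\F_q[x]$ is squarefree, the Frobenius conjugacy class at $q$ consists of permutations whose cycle type equals the sequence of degrees of irreducible factors of $\bar f$. First I would verify squarefreeness modulo each of $q_1,q_2,q_3$: irreducibility handles $q_1$ directly; for $q_2$, the factor $x+c$ cannot divide the irreducible polynomial $h(x)$ of degree $n-1\geq 2$ (the case $n=2$ is trivial), so $\bar f=(x+c)h(x)$ is squarefree; for $q_3$, the irreducible factor $m_1$ has even degree while every irreducible factor of $m_2$ has odd degree, and the factors of $m_2$ are assumed distinct, so the whole factorization is squarefree.

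With squarefreeness in place, $G$ contains three distinguished elements: an $n$-cycle $\sigma_1$ coming from $q_1$; a permutation $\sigma_2$ of cycle type $(1,n-1)$ coming from $q_2$; and a permutation $\sigma_3$ of cycle type $(2,d_1,\ldots,d_k)$ with every $d_j$ odd, coming from $q_3$. The element $\sigma_1$ immediately forces $G$ to act transitively on the $n$ roots.

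Next I would upgrade transitivity to $2$-transitivity. Let $\ast$ be the unique point fixed by $\sigma_2$. Then $\sigma_2$ lies in the stabilizer $G_\ast$ and permutes the remaining $n-1$ points as a single cycle, so $G_\ast$ acts transitively on $\{1,\ldots,n\}\setminus\{\ast\}$. Since all point stabilizers are conjugate under the transitive action of $G$, this shows $G$ is $2$-transitive, and in particular primitive. To extract a transposition, set $m=\mathrm{lcm}(d_1,\ldots,d_k)$, an odd integer; then $\sigma_3^{\,m}$ kills every odd-length cycle while preserving the $2$-cycle, yielding a transposition in $G$. A classical theorem of Jordan then states that any primitive subgroup of $S_n$ containing a transposition must equal $S_n$, which gives the conclusion.

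The argument is short once the right pieces are assembled; the only real obstacle is organizational, namely arranging the chain of implications transitive $\Rightarrow$ $2$-transitive (hence primitive) $\Rightarrow$ full $S_n$ in the correct order and citing the appropriate group-theoretic lemmas (Dedekind's factorization theorem, the stabilizer criterion for $2$-transitivity, and Jordan's primitive-plus-transposition theorem). No delicate computation or number-theoretic input beyond squarefreeness modulo the chosen primes is required.
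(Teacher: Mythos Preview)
Your proposal is correct and follows essentially the same route as the paper: both arguments read off from the three primes (via Dedekind/Frobenius) that $G$ is transitive, contains an $(n-1)$-cycle, and contains a transposition, and then conclude $G=S_n$. The paper packages the final group-theoretic step as a single citation (Milne's Lemma~4.32: a transitive subgroup of $S_n$ containing an $(n-1)$-cycle and a transposition equals $S_n$), whereas you unpack this lemma explicitly via the chain transitive $\Rightarrow$ $2$-transitive $\Rightarrow$ primitive $\Rightarrow$ $S_n$ (Jordan); your explicit squarefreeness checks are also a useful addition that the paper leaves implicit.
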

A proof for this proposition can be read off from \cite[Example 4.33]{[Milne]} where a particular example of $(q_1, q_2, q_3)$ is discussed. For the sake of completeness, we provide a proof of the this proposition as stated above. 

\begin{proof}
Let $G_f=\Gal(\Q(f)/\Q)$ which is naturally a subgroup of $S_n$. The first condition implies that $f(x)$ is irreducible over $\Z$, hence over $\Q$. By \cite[Proposition 4.4]{[Milne]}, $G_f$ is a transitive subgroup of $S_n$. The second condition implies that $G_f$ contains an $(n-1)$ cycle. The third condition implies that $G_f$ contains a transposition. By \cite[Lemma 4.32]{[Milne]}, we must have $G_f=S_n.$

\end{proof}
\begin{ex}
Let us discuss a concrete example with $p=11$. In this case, we have 
\[ g_{11}(x)=x^4-3x^2+2x+3. \] 
Let $(q_1, q_2, q_3)=(5,7,53)$. Then $g_{11}(x)$ is irreducible in $\F_5[x].$ In $\F_{7}[x]$, $g_{11}(x)$ has the following factorization 
\[ g_{11}(x)= (x + 4)(x^3 + 3x^2 + 6x + 6) .\] 
In $\F_{53}[x]$, $g_{11}(x)$ has the following factorization 
\[ g_{11}(x)=(x + 26) (x + 30)  (x^2 + 50x + 21) .\] 
We see that the triple $(q_1, q_2, q_3)$ satisfies the conditions given in Proposition \ref{prop:Galois_computation}. Therefore the Galois group of $g_{11}(x)$ must be $S_4.$
\end{ex} 

\begin{ex}
Let us consider the case $p=13.$ In this case, we have 
\[ g_{13}(x)=x^4 - 2x^2 + 2x + 1 .\]
Let $(q_1, q_2, q_3)=(3, 5, 61)$. Then $g_{13}(x)$ is irreducible in $\F_{3}[x]$. In $\F_{5}[x]$, it has the following factorization 
\[ g_{13}(x)=(x + 2) (x^3 + 3x^2 + 2x + 3).\] 
In $\F_{61}[x]$, it has the following factorization 
\[ g_{13}(x)=(x + 51)(x + 54) (x^2 + 17x + 34). \] 
We see that $(q_1, q_2, q_3)$ satisfies the conditions given in Proposition \ref{prop:Galois_computation}. We conclude that the Galois group of $g_{13}(x)$ is $S_4.$
\end{ex}
We wrote some SageMath codes to test the above strategy (see the github repository \cite{[codes]} for detailed information about the functionality of our codes). We found that for $p \leq 1600$, the triple $(q_1, q_2, q_3)$ always exists. We provide below the smallest triple $(q_1, q_2, q_3)$ for $p<1000$. We then show the running time for each $p$ in the range $[1000, 1100]$ (Table $3$). Finally, we provide the running time when we search for the triple $(q_1, q_2, q_3)$ for several $p$. As indicated in Table $4$, this is a computationally challenging problem. 
\newpage

\begin{center}
\captionof{table}{Smallest triples $(q_1, q_2, q_3)$ for primes $7< p<500$}
\begin{multicols}{3}

\begin{tabular}{ |c||c|} 
 \hline
 $p$ & $(q_1, q_2, q_3)$ \\ 
 \hline 
 $11$ & $(5, 7, 53)$ \\
 \hline 
 $13$ & $(3,5,61)$ \\
 \hline 
 $17$ & $(19, 3, 11)$ \\
 \hline
 $19$ & $(5, 31, 43)$ \\
 \hline 
 $23$ & $(7, 13, 101)$ \\
 \hline 
 $29$ & $(53, 5, 83)$ \\
 \hline
 $31$ & $(61, 13, 17)$ \\
 \hline 
 $37$ & $(7, 13, 31)$ \\
 \hline 
 $41$ & $(11, 103, 43)$ \\ 
 \hline 
 $43$ & $(5, 31, 23)$ \\
 \hline 
 $47$ & $(107, 7, 53)$ \\ 
 \hline 
 $53$ & $(11, 59, 17)$ \\ 
 \hline 
 $59$ &$(211, 257, 41)$ \\
 \hline
 $61$ & $(197, 5, 41)$ \\
 \hline 
 $67$ & $(113, 41, 29)$ \\
 \hline
 $71$ & $(31, 37, 5)$ \\
 \hline 
 $73$ & $(97, 149, 47)$ \\
 \hline 
 $79$ & $(73, 113, 53)$ \\
 \hline 
 $83$ & $(617, 61, 101)$ \\
 \hline 
 $89$ & $(127, 151, 103)$ \\
 \hline 
 $97$ & $(53, 61, 41)$ \\
 \hline
 $101$ & $(547,149, 89)$ \\
 \hline 
 $103$ & $(457,277,127)$ \\
 \hline 
 $107$ & $(17, 193, 53)$ \\
 \hline
 $109$ & $(127, 293, 157)$ \\
 \hline 
 $113$ & $(23, 491, 101)$ \\
 \hline 
 $127$ & $(223, 197, 41)$ \\
 \hline
 $131$ & $(499, 1193, 19)$ \\
 \hline 
 $137$ & $(839, 523, 59)$ \\
 \hline 
 $139$ & $(673, 103, 157)$ \\ 
 \hline 
 $149$ & $(107, 43, 179)$ \\
 \hline 
 \end{tabular}

\begin{tabular}{ |c||c|} 
 \hline
 $p$ & $(q_1, q_2, q_3)$ \\ 
 \hline 
  $151$ & $(1217, 37, 67)$ \\ 
 \hline 
  $157$ & $(229, 67, 191)$ \\ 
 \hline 
 $163$ &$(23, 239, 103)$ \\
 \hline
 $167$ & $(199, 379, 73)$ \\
 \hline 
 $173$ & $(127, 139, 29)$ \\
 \hline
 $179$ & $(131, 211, 101)$ \\
 \hline 
 $181$ & $(569, 347, 613)$ \\
 \hline 
 $191$ & $(509, 281, 101)$ \\
 \hline
 $193$ & $(13, 307, 107)$ \\
 \hline 
 $197$ & $(2141, 257, 17)$ \\
 \hline 
 $199$ & $(547, 787, 17)$ \\
 \hline
 $211$ & $(47,311, 23)$ \\
 \hline 
 $223$ & $(1481, 179, 103)$ \\
 \hline 
 $227$ & $(317, 439, 223)$ \\
 \hline
 $229$ & $(631, 719, 89)$ \\
 \hline 
 $233$ & $(1559, 977, 29)$ \\
 \hline 
 $239$ & $(199, 17, 59)$ \\
 \hline
 $241$ & $(2857, 1231, 83)$ \\
 \hline 
 $251$ & $(41, 73, 277)$ \\
 \hline 
 $257$ & $(1129,919, 227)$ \\ 
 \hline 
 $263$ & $(1571, 239, 17)$ \\
 \hline 
 $269$ & $(929, 97, 43)$ \\ 
 \hline 
 $271$ & $(821, 3343, 239)$ \\ 
 \hline 
 $277$ &$(317, 2693, 59)$ \\
 \hline
 $281$ & $(283, 131, 71)$ \\
 \hline 
 $283$ & $(89, 953, 199)$ \\
 \hline
 $293$ & $(523, 691, 11)$ \\
 \hline 
 $307$ & $(137, 487, 197)$ \\
 \hline 
 $311$ & $(1291, 2029, 83)$ \\
 \hline 
 $313$ & $(197, 661, 31)$ \\
 \hline 
 $317$ & $(1583, 59, 193)$ \\
 \hline 
\end{tabular}

\begin{tabular}{ |c||c|} 
 \hline
 $p$ & $(q_1, q_2, q_3)$ \\ 
 \hline 
  $331$ & $(53, 1733, 337)$ \\
 \hline
  $337$ & $(3257, 599, 79)$ \\
 \hline
 $347$ & $(2113,173,197)$ \\
 \hline 
 $349$ & $(53, 421, 11)$ \\
 \hline
 $353$ & $(1301, 2689, 653)$ \\
 \hline 
 $359$ & $(1069, 443, 463)$ \\
 \hline 
 $367$ & $(1459, 677, 269)$ \\
 \hline
 $373$ & $(647, 151, 347)$ \\
 \hline 
 $379$ & $(2003, 9421, 337)$ \\
 \hline 
 $383$ & $(47, 59, 71)$ \\ 
 \hline 
 $389$ & $(167, 1423, 401)$ \\
 \hline 
 $397$ & $(701, 5741, 23)$ \\ 
 \hline 
 $401$ & $(1117, 823, 83)$ \\ 
 \hline 
 $409$ &$(59, 157, 107)$ \\
 \hline
 $419$ & $(659, 2939, 149)$ \\
 \hline 
 $421$ & $(1093, 31, 11)$ \\
 \hline
 $431$ & $(163, 2447, 251)$ \\
 \hline 
 $433$ & $(811, 809, 149)$ \\
 \hline 
 $439$ & $(3187, 2143, 593)$ \\
 \hline 
 $443$ & $(5879, 4973, 149)$ \\
 \hline 
 $449$ & $(241, 131, 293)$ \\
 \hline 
 $457$ & $(79, 2393, 233)$ \\
 \hline
 $461$ & $(1531, 3691, 173)$ \\
 \hline 
 $463$ & $(2753, 2999, 97)$ \\
 \hline 
 $467$ & $(463, 593, 113)$ \\
 \hline
 $479$ & $(5527, 1187, 509)$ \\
 \hline 
 $487$ & $(991, 3323, 179)$ \\
 \hline 
 $491$ & $(89, 3347, 103)$ \\
 \hline
 $499$ & $(947, 887, 59)$ \\
  \hline 
 \end{tabular}
 \end{multicols}
\end{center}

\newpage
\begin{center}

\captionof{table}{Smallest triples $(q_1, q_2, q_3)$ for primes $p$: $500< p<1000$} 
\begin{multicols}{3}

\begin{tabular}{|c||c|}

 \hline
 $p$ & $(q_1, q_2, q_3)$ \\ 
 \hline 
 $503$ & $(89, 1913, 19)$ \\
 \hline
 $509$ & $(2729, 617, 71)$ \\
 \hline
 $521$ & $(701, 1069, 277)$ \\
 \hline 
 $523$ & $(541, 3557, 151)$\\
 \hline
 $541$ & $(787, 1553, 109)$ \\
 \hline
 $547$ & $(241, 1049, 73)$ \\
 \hline
 $557$ & $(2027, 271, 131)$ \\
 \hline 
 $563$ & $(593, 929, 107)$ \\
 \hline 
 $569$ & $(4153, 197, 487)$ \\
 \hline
 $571$ & $(79, 683, 71)$ \\
 \hline
 $577$ & $(223, 1759, 229)$ \\
 \hline
 $587$ & $(7457, 2099, 13)$\\
 \hline
 $593$ & $(43, 1367, 439)$\\
 \hline
 $599$ &$(13, 3709, 811)$ \\
 \hline
  $601$ & $(1697, 2459, 103)$ \\
 \hline
 $607$ & $(599, 7207, 211)$ \\
 \hline
 $613$ & $(401, 7559, 331)$ \\
 \hline
 $617$ & $(659, 641, 47)$ \\
 \hline
 $619$ & $(31, 1553, 197)$ \\
 \hline
 $631$ & $(457, 463, 61)$ \\
 \hline
 $641$ & $(751, 6577, 53)$ \\
 \hline
 $643$ & $(5623, 1499, 307)$ \\
 \hline
 $647$ & $(1879, 41, 13)$ \\
 \hline
 $653$ & $(9781, 2711, 19)$ \\
 \hline
 $659$ & $(1543, 5743, 677)$\\
 \hline
 \end{tabular}
 
 \begin{tabular}{|c||c|}
 \hline
  $p$ & $(q_1, q_2, q_3)$ \\ 
 \hline 
 $661$ & $(149, 3469, 233)$ \\
 \hline
 $673$ & $(59, 127, 37)$ \\
 \hline
 $677$ & $(3187, 1451, 97)$ \\
 \hline
 $683$ & $(4603, 3307, 83)$\\
 \hline
 $691$ & $(239, 947, 83)$ \\
 \hline
  $701$ & $(3023, 1231, 29)$\\
 \hline
 $709$ & $(1217, 997, 263)$ \\
 \hline
 $719$ & $(73, 7213, 53)$ \\
 \hline
 $727$ & $(5443, 4111, 43)$ \\
 \hline
 $733$ & $(1367, 3581, 97)$ \\
 \hline
 $739$ & $(4451, 97, 349)$ \\
 \hline
 $743$ & $(359, 13, 37)$ \\
 \hline
 $751$ & $(19, 13267, 601)$ \\
 \hline
 $757$ & $(6421, 491, 97)$ \\
 \hline
 $761$ & $(523, 5281, 5)$ \\
 \hline
 $769$ & $(2099, 2671, 109)$ \\
 \hline
 $773$ & $(10369, 3511, 1061)$ \\
 \hline
 $787$ & $(2861, 251, 443)$ \\
 \hline
 $797$ & $(283, 6091, 7)$ \\
 \hline
 $809$ & $(1009, 5417, 1693)$ \\
 \hline
 $811$ & $(709, 103, 109)$ \\
 \hline
 $821$ & $(2677, 4957, 67)$\\
 \hline
 $823$ & $(443, 8167, 13)$ \\
 \hline
 $827$ & $(9769, 199, 13)$ \\
 \hline
 \end{tabular}
 
 \begin{tabular}{|c||c|}
\hline
 $p$ & $(q_1, q_2, q_3)$ \\ 
 \hline 
 $829$ & $(2017, 2129, 457)$ \\
 \hline
 $839$ & $(41, 1867, 5)$\\
 \hline
 $853$ & $(8017, 4691, 13)$ \\
 \hline
 $857$ & $(919, 3461, 199)$ \\
 \hline
 $859$ & $(1129, 3359, 251)$ \\
 \hline
 $863$ & $(4493, 331, 1151)$ \\
 \hline
 $877$ & $(4999, 1297, 31)$ \\
 \hline
 $881$ & $(1213, 2693, 331)$ \\
 \hline
 $883$ & $(1621, 1889, 97)$ \\
 \hline
 $887$ & $(743, 6547, 29)$ \\
 \hline
 $907$ & $(997, 14767, 277)$ \\
 \hline
 $911$ & $(3931, 4027, 59)$ \\
 \hline
 $919$ & $(839, 9547, 733)$ \\
 \hline
 $929$ & $(4583, 9103, 29)$ \\
 \hline
 $937$ & $(4871, 15467, 3851)$ \\
 \hline
 $941$ & $(3313, 359, 1093)$ \\
 \hline
 $947$ & $(17669, 5641, 223)$ \\
 \hline
 $953$ & $(1973, 4013, 79)$ \\
 \hline
 $967$ & $(859, 4759, 821)$\\
 \hline
 $971$ & $(1973, 1291, 557)$ \\
 \hline
 $977$ & $(2617, 2153, 17)$ \\
 \hline
 $983$ & $(3637, 947, 89)$\\
 \hline
 $991$ & $(239, 3037, 173)$ \\
 \hline
 $997$ & $(4583, 1907, 191)$\\
 \hline
\end{tabular} 
 \end{multicols}

\end{center}

\newpage
We provide some further examples and the running times of our codes. 
\begin{center}

\begin{tabular}{ |p{2cm}||p{4cm}|p{5cm}|} 
 \hline
 $p$ & $(q_1, q_2, q_3)$ & \text{Wall time}\\ 
 \hline
 $1009$ & $(5393, 4211, 593)$ & \text{5min 35s} \\
 \hline
 $1013$ & $(499, 3049, 43)$ & \text{2 min 2s} \\
 \hline
 $1019$ & $(2687, 1373, 193)$ & \text{2min 28s}\\
 \hline
 $1021$ & $(11171, 48187, 79)$ & \text{26min 18s}\\
 \hline
 $1031$ & $(983, 547, 1747)$ & \text{2min}  \\
 \hline
 $1033$  &$(6131, 1789, 79)$ &\text{4min 18s} \\
 \hline
 $1039$ & $(4231, 1367, 383)$ & \text{3min 43s} \\
 \hline
 $1049$ & $(683, 3407, 17)$ &\text{2min 18s} \\
 \hline
 $1051$ & $(859, 1093, 1087)$ & \text{2min 6s} \\
 \hline
 $1061$ &$(2027, 3727, 313)$ & \text{3min 40s} \\
 \hline
 $1063$ & $(2179, 3259, 179)$ & \text{3min 18s} \\
 \hline
 $1069$ & $(1973, 211, 433)$ & \text{1min 43s} \\
 \hline
 $1087$ &$(3863, 1289, 313)$ &\text{3min 25s} \\
 \hline
 $1091$ & $(211, 6301, 311)$ & \text{3min 58s} \\
 \hline
 $1093$ & $(41, 10103, 283)$ & \text{5min 56s} \\
 \hline
 $1097$ & $(1103, 1607, 173)$ &\text{2min 1s} \\
 \hline
\end{tabular}
\captionof{table}{Smallest triples $(q_1, q_2, q_3)$ for primes $1000<p<1100$ and the running times}
\end{center}

Finally, we show the running time when we test several $p$ simultaneously. Here we look for triples $(q_1, q_2, q_3)$ such that $\max \{q_1, q_2, q_3 \} <10^6.$

\begin{center}
\begin{tabular}{ |p{2.5cm}||p{2cm}| p{4cm}|p{3cm}|} 
\hline
 $\text{Interval}$ & \text{\# primes} & \text{Existence of} $(q_1, q_2, q_3)$ & \text{Wall time}\\ 
 \hline
 $(1000, 1100)$ & $16$ & \text{YES} & \text{1h 20min 41s} \\
 \hline
 $(1100, 1200)$ & $11$ &\text{YES} & \text{1h 6min 53s} \\
 \hline
 $(1200, 1300)$ & $15$ &\text{YES} & \text{2h 51min 41s} \\
 \hline
 $(1300, 1400)$ & $11$ &\text{YES} & \text{1h 16min 13s}\\
 \hline
 $(1400, 1500)$ &$17$ &\text{YES} & \text{3h 29min 46s} \\
 \hline
 $(1500, 1600)$ & $12$ &\text{YES} & \text{2h 45min 41s} \\
 \hline

\end{tabular}    
\captionof{table}{Existence of $(q_1, q_2, q_3)$ for primes $1000<p<1600$ and the running times.}

\end{center}

\begin{conj} \label{conj2}
 $\Q(f_p)/\Q$ is a Galois extension with Galois group $(\Z/2\Z)^{h_p}\rtimes S_{h_p}$ where $h_p=\deg(g_p)$ and the symmetric group $S_{h_p}$ acts naturally as the group of permutations on $(\Z/2\Z)^{h_p}$.
\end{conj}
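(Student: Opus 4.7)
The plan is to prove Conjecture~\ref{conj2} conditional on Conjecture~\ref{conj1}. Granting $\Gal(\Q(g_p)/\Q) \cong S_{h_p}$, denote the roots of $g_p$ by $u_1,\ldots,u_{h_p}$, and for each $i$ let $\alpha_i = x_{i1} - x_{i2}$, where $x_{i1}, x_{i2}$ are the two roots of $x^2 - u_i x + 1 = 0$ (so $\alpha_i^2 = u_i^2 - 4$). Then
\[ \Q(f_p) = \Q(g_p)(\alpha_1, \ldots, \alpha_{h_p}). \]
The natural embedding $\Gal(\Q(f_p)/\Q) \hookrightarrow \Z/2\Z \wr S_{h_p}$ (acting on the ordered pairs $\{x_{i1}, x_{i2}\}$ of roots of $f_p$) together with surjectivity onto $S_{h_p}$ reduces the whole conjecture to the single equality
\[ [\Q(f_p):\Q(g_p)] = 2^{h_p}, \]
since then $|\Gal(\Q(f_p)/\Q)| = 2^{h_p} h_p! = |\Z/2\Z \wr S_{h_p}|$ forces the embedding to be an isomorphism onto the wreath product, which is precisely $(\Z/2\Z)^{h_p} \rtimes S_{h_p}$ with the natural permutation action.

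The heart of the proof is therefore to establish that the classes $[u_1^2-4], \ldots, [u_{h_p}^2-4]$ are $\F_2$-linearly independent in $\Q(g_p)^\times/(\Q(g_p)^\times)^2$. Let $K \subseteq \F_2^{h_p}$ be the kernel of the map $e_i \mapsto [u_i^2 - 4]$. Under Conjecture~\ref{conj1}, $K$ is $S_{h_p}$-stable, so $K$ lies in the short list $\{0,\ D,\ A,\ \F_2^{h_p}\}$ of invariant subspaces of the permutation module, where $D = \langle e_1 + \cdots + e_{h_p}\rangle$ is the diagonal and $A = \{\sum a_i e_i : \sum a_i = 0\}$ is the augmentation hyperplane. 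I would rule out the three nontrivial cases in turn. Case $K = \F_2^{h_p}$ collapses $\Q(f_p)$ onto $\Q(g_p)$; this should be excluded by comparing real places ($\Q(g_p)$ is expected to be totally real, while $f_p$ has many non-real roots on or near the unit circle). Case $K = D$ says $s_p = \prod(u_i^2 - 4) = g_p(2)g_p(-2)$ is a square in $\Q(g_p)$; since (by Conjecture~\ref{conj1}) the unique quadratic subfield of $\Q(g_p)$ is $\Q(\sqrt{\Delta(g_p)})$, this reduces to the rational assertion $s_p \Delta(g_p) \in (\Q^\times)^2$, a condition that can be tested against the explicit class-number / Bernoulli-number formulas for $g_p(\pm 2)$ of Section~3. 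Case $K = A$ is the strongest: every product $(u_i^2 - 4)(u_j^2 - 4)$ would be a square in $\Q(g_p)$, forcing $\Q(f_p)/\Q(g_p)$ to be generated by a single $\alpha_i$ and hence quadratic, a collapse that contradicts the numerical behavior observed for small $p$.

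The main obstacle is case $K = D$ (and secondarily $K = A$): ruling out that $s_p \Delta(g_p)$ is a rational square uniformly in $p$ requires arithmetic control over $\Delta(g_p)$ that is not developed in the paper. A successful proof will most likely need either a closed-form factorization of $\Delta(g_p)$ (perhaps as a product of class numbers and a power of $p$, in the spirit of the formulas for $g_p(\pm 2),\, g_p(\pm 1),\, g_p(0)$) or a ``large image'' argument via Chebotarev and the triples $(q_1,q_2,q_3)$ of Proposition~\ref{prop:Galois_computation} applied directly to $f_p$, producing a permutation of cycle type witnessing both an $(h_p-1)$-cycle on pairs and an independent sign flip, which would bypass the Kummer-theoretic squareclass analysis altogether.
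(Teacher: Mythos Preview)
The statement is a \emph{Conjecture}, and the paper does not prove it. The paper's contribution is empirical: Proposition~\ref{prop:Galois_computation_1} gives a certificate --- a quadruple of primes $(q_1,q_2,q_3,q_4)$ over which $f_p$ factors with prescribed cycle types --- that verifies the conjecture for a single $p$, and the paper then tabulates such quadruples for $p<600$. No general argument is claimed or attempted.

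Your proposal takes a different, structural route: conditional on Conjecture~\ref{conj1}, embed $\Gal(\Q(f_p)/\Q)$ into the wreath product and reduce to showing that the square classes $[u_i^2-4]$ are independent in $\Q(g_p)^\times/(\Q(g_p)^\times)^2$. The framework and the reduction to the four possible $S_{h_p}$-stable kernels $K\in\{0,D,A,\F_2^{h_p}\}$ are correct. But as you explicitly concede, none of the three nontrivial cases is actually ruled out: excluding $K=\F_2^{h_p}$ via ``$\Q(g_p)$ is expected to be totally real'' is unproved; excluding $K=D$ requires knowing $s_p\Delta(g_p)\notin(\Q^\times)^2$, for which you offer no argument; and $K=A$ is dismissed only by appeal to numerics. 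Since the proposal is moreover conditional on the open Conjecture~\ref{conj1}, it is a strategy outline with named gaps rather than a proof, and does not go beyond what the paper establishes.
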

Note that Conjecture \ref{conj2} implies Conjecture \ref{conj1}. We will provide some numerical evidence for Conjecture \ref{conj2}. Since it is computationally difficult to compute the degree of $\Q(f_p)/\Q$ explicitly, we adapt a similar approach as before to show that the Galois group of $\Q(f_p)/\Q$ is $(\Z/2\Z)^{h_p}\rtimes S_{h_p}$ where $2h_p=\deg(f_p).$ This approach is based on the following proposition. 
\begin{prop}
\label{prop:Galois_computation_1}
Let $f(x)$ be a monic reciprocal polynomial with integer coefficients of even degree $2n$. Assume that there exists a quadruple of prime numbers $(q_1, q_2, q_3,q_4)$ such that 
\begin{enumerate}
    \item $f(x)$ is irreducible in $\F_{q_1}[x]$. 
    \item $f(x)$ has the following factorization in $\F_{q_2}[x]$ 
    \[ f(x)=(x+c_1)(x+c_2)h(x), \]
    where $c_1, c_2$ are distinct elements in $\F_{q_2}$ and $h(x)$ is an irreducible polynomial of degree $2n-2$. 
    \item $f(x)$ has the following factorization in $\F_{q_3}[x]$ 
    \[ f(x)=m_1(x)m_2(x), \]
    where $m_1(x)$ is a polynomial of degree $2$ and $m_2(x)$ is a product of distinct irreducible polynomials of odd degrees. 
    \item $f(x)$ has the following factorization in $\F_{q_4}[x]$ 
    \[ f(x)=p_1(x)p_2(x), \]
    where $p_1(x)$ is irreducible polynomial of degree $4$ and $p_2(x)$ is a product of distinct irreducible polynomials of odd degrees. 
\end{enumerate}
Then the Galois group of $\Q(f)/\Q$ is $(\Z/2\Z)^{n}\rtimes S_{n}$
\end{prop}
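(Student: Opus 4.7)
The plan is to embed $G = \Gal(\Q(f)/\Q)$ into the wreath product $W = (\Z/2\Z)^n \rtimes S_n$ via its action on the $n$ reciprocal pairs $\{\alpha_i, 1/\alpha_i\}$ of roots of $f$; condition (1) forces $f$ to be irreducible over $\Q$, so $\pm 1$ cannot be roots and the pairs are genuine of size $2$. Writing $\pi \colon W \to S_n$ for the projection that forgets signs, $\bar G = \pi(G)$, and $K = G \cap \ker \pi$, the equality $G = W$ is equivalent to $\bar G = S_n$ together with $K = (\Z/2\Z)^n$. I will pin down each of these from the four conditions by passing to the appropriate Frobenius elements $\tau_i \in G$.

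To show $\bar G = S_n$, I will use conditions (1), (2) and (4). From (1), $\tau_1$ is a $2n$-cycle on the roots which, being pair-preserving, projects to an $n$-cycle in $\bar G$, so $\bar G$ is transitive. From (2), $\tau_2$ fixes exactly two roots $-c_1, -c_2$; reciprocity of $f$ combined with the irreducibility of the degree-$(2n-2)$ factor $h$ forces $-c_2 = -1/c_1$ (otherwise $-1/c_1$ would be a root of $h$ in $\F_{q_2}$), so these two roots form a reciprocal pair and $\pi(\tau_2)$ is an $(n-1)$-cycle fixing one point, promoting $\bar G$ to $2$-transitivity. From (4), the degree-$4$ factor $p_1$ corresponds to a Frobenius orbit closed under reciprocation (otherwise its reciprocal partner would give a second degree-$4$ factor of $f$, contradicting the hypothesis), and the only such orbit shape preserving pairs is $\alpha \to \beta \to 1/\alpha \to 1/\beta \to \alpha$, so $\pi(\tau_4)$ is a transposition times odd-length cycles; raising to the (odd) lcm of those odd cycle lengths leaves a pure transposition in $\bar G$. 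A $2$-transitive subgroup of $S_n$ containing a transposition must be all of $S_n$, giving $\bar G = S_n$.

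For $K = (\Z/2\Z)^n$ I turn to condition (3). The $2$-cycle of $\tau_3$ swaps a reciprocal pair by the same reciprocity-plus-irreducibility argument. The remaining odd-degree irreducible factors of $f$ modulo $q_3$ correspond, by a parity analysis in $W$, to orbits of $\sigma = \pi(\tau_3)$ which either come from the fixed pair (the one with a swap) or are odd-length $S_n$-cycles with trivial net sign along each cycle. Writing $\tau_3 = (\epsilon; \sigma)$, this means $\sigma$ has only odd-length cycles, $\sum \epsilon_j$ over each non-trivial cycle of $\sigma$ vanishes, and $\epsilon_{i^*} = 1$ at the unique $\sigma$-fixed pair $i^*$. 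Raising $\tau_3$ to the odd integer $\ord(\sigma)$ kills $\sigma$ and leaves $(e_{i^*}; \id) \in K$, a single pair-swap. Since $K$ is normal in $G$ and $\bar G = S_n$ acts on $K \subseteq \F_2^n$ by coordinate permutations, the $S_n$-orbit of $e_{i^*}$ is the entire standard basis, so $K = (\Z/2\Z)^n$.

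The main obstacle is the careful bookkeeping inside $W$ rather than merely $S_{2n}$: in particular, establishing that every odd-length cycle of a pair-preserving permutation (on a root set without $\pm 1$) must appear together with its reciprocal partner, equivalently that a single odd-length orbit in $S_{2n}$ corresponds in $W$ to an odd-length orbit of pairs with trivial aggregate sign, is the delicate combinatorial input that drives both the transposition extraction from (4) and the pair-swap extraction from (3). Once that analysis is in place, the rest of the argument is a clean two-step identification of the image and the kernel of $\pi|_G$.
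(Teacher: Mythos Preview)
Your proof is correct. The paper's own argument is much terser: it simply notes that conditions (1)--(4) force $G$ to contain a $2n$-cycle, a $(2n-2)$-cycle, a $2$-cycle, and a $4$-cycle (the last two obtained by raising the relevant Frobenius element to the odd lcm of the remaining cycle lengths), and then invokes \cite[Lemma~2]{[DDS]} to conclude $G=(\Z/2\Z)^n\rtimes S_n$. Your decomposition into the quotient $\bar G=\pi(G)\subseteq S_n$ and the kernel $K=G\cap(\Z/2\Z)^n$, together with the parity bookkeeping along $\sigma$-cycles in $W$, is in effect a direct, self-contained proof of that Davis--Duke--Sun lemma. What you gain is independence from the external reference and a transparent account of the role of each prime (three of them pin down $\bar G$, the fourth pins down $K$); what the paper gains is brevity. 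One small point: your argument that the two linear roots $-c_1,-c_2$ at $q_2$ form a reciprocal pair is cleaner if carried out on the $\bar\Q$-roots rather than in $\F_{q_2}$, since over $\bar\Q$ the fact that $\tau_2\in W$ and fixes $\alpha$ immediately gives $\tau_2(1/\alpha)=1/\alpha$, with no need to worry about the degenerate possibility $-c_1\in\{\pm1\}$.
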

\begin{proof}
The above conditions shows that the Galois group $\Q(f)/\Q$ contains an $2n$-cycle, an $(2n-2)$-cycle, a $4$-cycle, and a $2$-cycle. By \cite[Lemma 2]{[DDS]}, the Galois group of $\Q(f)/\Q$ is $(\Z/2\Z)^{n}\rtimes S_{n}$.
\end{proof}
Below we provide a table for the existence for $(q_1, q_2, q_3, q_4)$ for $p<600.$

\newpage

{\centering

\captionof{table}{Quadruple $(q_1, q_2, q_3, q_4)$}
\begin{multicols}{3}
\scalebox{0.9}{
\begin{tabular}{|c||c|} 
 \hline
 $p$ & $(q_1, q_2, q_3,q_4)$ \\ 
 \hline 
 $11$ & $(5, 23, 7, 73)$ \\
 \hline
 $13$ & $(3, 19, 31, 103)$ \\
 \hline
 $17$ & $(37, 541, 31, 367)$ \\
 \hline
 $19$ & $(5, 307, 31, 503)$ \\
 \hline
 $23$ & $(7, 97, 181, 241)$ \\
 \hline
 $29$ & $(53, 541, 19, 787)$ \\
 \hline
 $31$ & $(61, 263, 13, 821)$ \\
 \hline
 $37$ & $(7, 19, 109, 53) $\\
 \hline
 $41$ & $(107, 743, 173, 467)$ \\
 \hline
 $43$ & $(7, 751, 1237, 23)$ \\
 \hline
 $47$ & $(107, 419, 421, 409)$ \\
 \hline
 $53$ & $(293, 631, 191, 41)$\\
 \hline
 $59$ & $(211, 1907, 53, 41)$ \\
 \hline
 $61$ & $(197, 89, 487, 2161)$ \\
 \hline
 $67$ & $(257, 227, 167, 337)$ \\
 \hline
 $71$ & $(31, 97, 461, 601)$ \\
 \hline
 $73$ & $(827, 149, 229, 919)$ \\
 \hline
 $79$ & $(691, 173, 113, 71)$ \\
 \hline
 $83$ & $(617, 367, 541, 331)$ \\
 \hline
 $89$ & $(127, 449, 151, 1129)$ \\
 \hline
 $97$ & $(53, 757, 157, 773)$ \\
 \hline 
 $101$ & $(1061, 1213, 149, 89)$ \\
 \hline
 $103$ & $(457, 1013, 211, 4937)$ \\
 \hline
 $107$ & $(797, 211, 139, 3307)$ \\
 \hline
 $109$ & $(373, 467, 293, 797)$ \\
 \hline
 $113$ &$(397, 631, 1217, 1549)$ \\ 
 \hline
 $127$ & $(223, 1811, 97, 53)$ \\
 \hline
 $131$ & $(499, 7549, 1319, 223)$ \\
 \hline
 $137$ & $(839, 9619, 617, 2633)$ \\
 \hline
 $139$ & $(839, 3607, 103, 1801)$ \\
 \hline
 $149$ & $(107, 827, 1823, 5827)$ \\
 \hline
 $151$ & $(1249, 359, 283, 1879)$ \\
 \hline
 $157$ & $(229, 67, 2251, 2609)$ \\
 \hline
 $163$ & $(1879, 13337, 991, 3163)$ \\
 \hline
 $167$ & $(347, 379, 109, 79)$\\
 \hline
 \end{tabular}

 \begin{tabular}{ |c||c|} 
 \hline
 $p$ & $(q_1, q_2, q_3,q_4)$ \\ 
 \hline 
 $173$ & $(503, 409, 191, 3121)$ \\
 \hline
 $179$ & $(421, 2069, 211, 1103)$ \\
 \hline
 $181$ & $(569, 347, 727, 773)$ \\
 \hline
 $191$ & $(509, 2909, 127, 101)$ \\
 \hline
 $193$ & $(13, 307, 673, 4027)$ \\
 \hline
 $197$ & $(5113, 617, 149, 31)$ \\
 \hline
 $199$ & $(547, 787, 8581, 499)$\\
 \hline
 $211$ & $(47, 947, 311, 1439)$ \\
 \hline
 $223$ & $(2333, 3449, 449, 541)$ \\
 \hline
 $227$ & $(317, 3271, 4157, 9001)$ \\
 \hline
 $229$ & $(5519, 719, 1801, 2767)$ \\
 \hline
 $233$ & $(1559, 9601, 2069, 29)$ \\
 \hline
 $239$ & $(199, 809, 233, 179)$ \\
 \hline
 $241$ & $(2857, 1231, 773, 617)$ \\
 \hline
 $251$ & $(41, 433, 443, 277)$ \\
 \hline
 $257$ & $(1129, 5779, 919, 15233)$ \\
 \hline
 $263$ & $(4463, 239, 3769, 11171)$ \\
 \hline
 $269$ & $(929, 6067, 97, 4129)$ \\
 \hline
 $271$ & $(3067, 3343, 4363, 3931)$ \\ 
 \hline
 $281$ & $(3919, 23623, 2089, 1741)$ \\
 \hline
 $283$ & $(89, 8629, 3251, 6691)$ \\
 \hline
 $293$ & $(3373, 1823, 677, 883)$ \\
 \hline
 $307$ & $(353, 487, 661, 557)$ \\
 \hline
 $311$ & $(1523, 8317, 1531, 2347)$ \\
 \hline
 $313$ & $(197, 5849, 263, 947)$ \\
 \hline
 $317$ & $(3769, 1499, 383, 673)$ \\
 \hline
 $331$ & $(53, 1861, 2833, 2081)$ \\
 \hline
 $337$ & $(3257, 599, 4793, 3833)$ \\
 \hline
 $347$ & $(8081, 173, 503, 197)$ \\
 \hline
 $349$ & $(6823, 421, 3329, 2377)$\\
 \hline
 $353$ & $(1301, 4271, 3121, 1831)$ \\
 \hline
 $359$ & $(1069, 9973, 443, 3881)$ \\
 \hline
 $367$ & $(1459, 677, 2113, 2399)$ \\
 \hline
 $373$ & $(5147, 3229, 151, 39113)$ \\
 \hline
 $379$ & $(3061, 9421, 9719, 27043)$ \\
 \hline
\end{tabular}

 \begin{tabular}{ |c||c|} 
 \hline
 $p$ & $(q_1, q_2, q_3,q_4)$ \\ 
 \hline 
  $383$ & $(47, 6217, 59, 2551)$ \\
  \hline
  $389$ & $(857, 16249, 1423, 3221)$ \\
 \hline
 $397$ & $(3323, 5741, 7459, 3061)$ \\
 \hline
 $401$ & $(2729, 8269, 823, 11287)$\\
 \hline
 $409$& $(2969, 4229, 157, 3559)$\\
 \hline
 $419$& $(659, 8537, 3307, 7369)$\\
 \hline
 $421$& $(3637, 431, 6983, 59)$\\
 \hline
 $431$ & $(1579, 2447, 2621, 601)$\\
 \hline
 $433$ & $(811, 2347, 5087, 2311)$\\
 \hline
 $439$ & $(3187, 2143, 1997, 4129)$\\
 \hline
 $443$ &  $(5879, 4973, 10597, 7487)$\\
 \hline
 $449$ & $(241, 5419, 43, 3217)$\\
 \hline
 $457$ & $(10859, 13009, 47, 3229)$\\
 \hline
 $461$ & $(1531, 3691, 269, 211)$\\
 \hline
 $463$ & $(2753, 5119, 2087, 3347)$\\
 \hline
 $467$ & $(463, 12853, 1493, 9661)$\\
 \hline
 $479$ & $(5527, 5471, 1187, 3307)$\\
 \hline
 $487$ & $(991, 14051, 7477, 2837)$\\
 \hline
 $491$ & $(461, 12721, 3347, 1867)$\\
 \hline
 $499$ & $(4397, 14653, 4937, 6197)$\\
 \hline
  $499$ & $(4397, 14653, 4937, 6197)$\\
  \hline
 $503$ & $(89, 1913, 307, 19)$\\
 \hline
 $509$ &  $(2729, 5483, 4201, 337)$\\
 \hline
 $521$ & $(701, 1069, 1747, 19379)$\\
 \hline
 $523$ & $(541, 5113, 2657, 12893)$\\
 \hline
 $541$ & $(4463, 1871, 367, 6761)$\\
 \hline
 $547$ & $(241, 1861, 1049, 3967)$\\
 \hline
 $557$ & $(4409, 271, 977, 5519)$\\
 \hline
 $563$ & $(593, 10181, 953, 15053)$\\
 \hline
 $569$ & $(7673, 5839, 197, 6803)$\\
 \hline
 $571$ & $(79, 1567, 1873, 2333)$\\
 \hline
 $577$ & $(421, 1759, 11177, 947)$\\
 \hline
 $587$ & $(7457, 17921, 17029, 13)$\\
 \hline
 $593$ & $(43, 2767, 16193, 12689)$\\
 \hline
 $599$& $(1597, 3709, 7829, 23743)$\\
 \hline
\end{tabular}  
}
\end{multicols}
}

Finally, we provide the running times for some larger primes $p$. 
\vspace{2em}
\begin{center}
\begin{tabular}{ |p{2cm}||p{5cm}|p{4cm}|} 
 \hline
 $p$ & $(q_1, q_2, q_3, q_4)$ & \text{Wall time}\\ 
 \hline
 $601$  & $(9181, 4691, 499, 12409)$ & \text{15min 8s} \\
 \hline
 $607$  & $(599, 7207, 7541, 9463)$ & \text{15min 8s} \\
 \hline
 $613$  & $(401, 27901, 1109, 7853)$ & \text{24min 4s} \\
 \hline
 $617$  & $(7307, 53731, 10597, 11171)$ & \text{54min 37s} \\
 \hline
 $619$ & $(2039, 1553, 1051, 6221)$ & \text{6min 9s} \\  
 \hline
 $631$  & $(57329, 463, 359, 10847)$ & \text{47min 3s} \\
 \hline
\end{tabular}  
\captionof{table}{Smallest triples $(q_1, q_2, q_3, q_4)$ for primes $600<p<632$ and the running times}

\end{center}

\begin{rmk}
A consequence of Conjecture \ref{conj2} is that the Fekete polynomial $f_p(x)$ is irreducible over $\Z[x].$ While it is computationally expensive to verify the full strength of Conjecture $\ref{conj2}$, it is easier to test the irreducibility of $f_p(x)$. By Corollary \ref{cor:equivalance}, over $\Z[x]$, the irreducibility of $f_p(x)$ is equivalent to the irreducibility of $g_p(x).$ We have verified that $g_p(x)$ and $f_p(x)$ are irreducible for $p<10^4$. We remark that to test for irreducibility, we use the built-in function \textit{is\_irreducible()} in Sagemath instead of using the factorization of $g_p(x)$ over finite fields. This method returns the results faster. For example, it took less than two minutes to verify the irreducibility of $g_p(x)$ for $1000<p<2000$. For further details, we refer to \cite{[codes]}.

We do not have a precise explanation for the irreducibility of $f_p(x)$ and $g_p(x)$. However, the data seems to suggest that $f_p(x)$ behaves like a random reciprocal polynomial (see \cite{[DDS]} for a detailed study of random reciprocal polynomials.) 

\end{rmk}

\section{Modular properties of $f_p(x)$ and $g_p(x)$}
First, we study the reduction of $F_p(x)$ modulo $p$.
\begin{prop} \label{mod_p}
The reduction modulo $p$ of $F_p(x)$ has the following factorization in $\F_p[x]$
\[ F_{p}(x)=(x-1)^{\frac{p-1}{2}} h(x), \]
where $h(x)$ is a polynomial in $\F_p[x]$ and $h(1) \neq 0$.
\end{prop}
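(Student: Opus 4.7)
The plan is to track the multiplicity of $x=1$ as a root of $F_p(x)$ modulo $p$ by expanding $F_p(y+1)$ around $y=0$ and using Euler's criterion to turn the sums into polynomial power-sums $\sum_{a=1}^{p-1} a^m \pmod p$, which vanish unless $(p-1) \mid m$.

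First, I would write $y = x-1$ and expand
\[
F_p(y+1) = \sum_{a=1}^{p-1}\left(\frac{a}{p}\right)(y+1)^a = \sum_{k\geq 0} c_k \, y^k, \qquad c_k = \sum_{a=k}^{p-1}\left(\frac{a}{p}\right)\binom{a}{k}.
\]
The multiplicity of $x=1$ as a root of the reduction $\bar F_p(x) \in \F_p[x]$ is the smallest $k$ with $c_k \not\equiv 0 \pmod p$. Since $k!$ is invertible in $\F_p$ for $k<p$, the binomial $\binom{a}{k}$ is a polynomial in $a$ of degree exactly $k$ with leading coefficient $1/k!$.

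Next, apply Euler's criterion $\left(\frac{a}{p}\right) \equiv a^{(p-1)/2} \pmod p$ to obtain
\[
c_k \equiv \sum_{a=1}^{p-1} a^{(p-1)/2}\binom{a}{k} \equiv \sum_{a=0}^{p-1} P_k(a) \pmod p,
\]
where $P_k(a) = a^{(p-1)/2}\binom{a}{k}$ is a polynomial in $a$ of degree $(p-1)/2 + k$ (the $a=0$ term adds nothing for $k\geq 1$, and one checks separately that $c_0 = F_p(1) = 0$). Now invoke the classical identity
\[
\sum_{a=0}^{p-1} a^m \equiv \begin{cases} -1 \pmod p, & (p-1)\mid m \text{ and } m>0,\\ 0 \pmod p, & \text{otherwise},\end{cases}
\]
applied monomial-by-monomial in the expansion of $P_k(a)$.

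For $0 \leq k < (p-1)/2$, every exponent $m$ in $P_k(a)$ satisfies $0 \leq m \leq (p-1)/2 + k < p-1$, so $c_k \equiv 0 \pmod p$. For $k = (p-1)/2$, the polynomial $P_k(a)$ has degree exactly $p-1$, and only the top monomial $a^{p-1}$ survives the sum; its coefficient is $1/\bigl(\tfrac{p-1}{2}\bigr)!$, giving
\[
c_{(p-1)/2} \equiv -\frac{1}{\bigl(\tfrac{p-1}{2}\bigr)!} \not\equiv 0 \pmod p.
\]
Therefore $\bar F_p(x) = (x-1)^{(p-1)/2} h(x)$ in $\F_p[x]$ with $h(1) \neq 0$, which is the claim. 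The only mildly delicate point is the bookkeeping that $P_k(a)$ has degree strictly less than $p-1$ precisely when $k < (p-1)/2$ and that the leading coefficient is a unit mod $p$; both are immediate once the range of $k$ is fixed.
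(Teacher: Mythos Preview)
Your proof is correct and follows essentially the same approach as the paper: both arguments compute the Taylor coefficients (equivalently, the derivatives $F_p^{(k)}(1)=k!\,c_k$) at $x=1$, replace the Legendre symbol by $a^{(p-1)/2}$ via Euler's criterion, and then invoke the power-sum identity $\sum_{a=1}^{p-1} a^m \equiv 0$ or $-1 \pmod p$. The only cosmetic difference is that the paper changes basis from falling factorials to monomials to reduce to pure power sums, whereas you expand $\binom{a}{k}$ monomial-by-monomial, which amounts to the same thing.
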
 
\begin{proof}
Because the degree of $F_p(x)$ is less than $p$, the above statement is equivalent to the following conditions (these equations are taken in $\F_p[x]$). 
\begin{enumerate}
\item For $r < \frac{p-1}{2}$, $F_{p}^{(r)}(1)=0$ where $F_{p}^{(r)}(x)$ is the $r$-th derivative of $F_p(x)$.
\item $F_{p}^{(\frac{p-1}{2})}(1) \neq 0$.

\end{enumerate} 

By definition, the $r$-the derivative of $F_p(x)$ is given by 
\[ F_{p}^{(r)}(x)=\sum_{a=1}^{p-1} \left(\frac{a}{p} \right) a(a-1)\ldots (a-r+1) x^{a-r} .\] 
For example when $r=1$ 
\[ F_{p}^{(r)}(x)=\sum_{a=1}^{p-1} \left(\frac{a}{p} \right) a x^{a-1} .\] 

The leading term of $a(a-1)\cdots (a-r+1)$ is $a^r$. We can see that the above two conditions are equivalent to the following two conditions. 

\begin{enumerate}
\item For $r <\frac{p-1}{2}$ 
\[ \sum_{a=1}^{p-1} \left(\frac{a}{p} \right) a^r \equiv 0 \pmod{p} .\] 
\item For $r=\frac{p-1}{2}$ 
\[ \sum_{a=1}^{p-1} \left(\frac{a}{p} \right) a^r \not \equiv 0 \pmod{p} .\] 

\end{enumerate} 
We can prove (1) and (2) as follows: By Euler's criterion, $\left(\dfrac{a}{p}\right)\equiv a^{\frac{p-1}{2}}\pmod p$, for $1\leq a\leq p-1$. By considering modulo $p$, one has
\[
\sum_{a=1}^{p-1} \left(\frac{a}{p} \right) a^r\equiv \sum_{a=1}^{p-1} a^{\frac{p-1}{2}+r} \equiv \begin{cases} 0 &\text{ if } r<\frac{p-1}{2}\\
-1 &\text{ if  } r=\frac{p-1}{2}.
\end{cases}
\]
\end{proof}

We define  
\[r_p= \begin{cases} 1 &\mbox{if } p \equiv 3 \pmod{4} \\
2 & \mbox{if } p \equiv 1 \pmod{4}. \end{cases}
\] In other words, $r_p$ is  the multiplicity of the root $x=1$ of $F_p(x)$.
Then by Proposition \ref{mod_p}, we have the following corollary. 
\begin{cor}
\label{cor: Fekete mod p}
$f_p(x)$ has the following factorization in $\F_p[x]$ 
\[ f_p(x)=(x-1)^{\frac{p-1}{2}-r_p} g(x) ,\] 
where $g(1) \neq 0$. 
\end{cor}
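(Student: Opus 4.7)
The plan is to deduce the corollary directly from Proposition \ref{mod_p} by working in $\F_p[x]$ and comparing $(x-1)$-adic valuations on both sides of the defining identity of $f_p(x)$. Recall that by construction, $f_p(x) \cdot D_p(x) = F_p(x)$ in $\Z[x]$, where
\[
D_p(x) = \begin{cases} x(1-x) & \text{if } p \equiv 3 \pmod 4, \\ x(1-x)^2(x+1) & \text{if } p \equiv 1 \pmod 4. \end{cases}
\]
In either case, $D_p(x)$ lifts to a nonzero polynomial mod $p$, and an inspection of its factorization shows that the multiplicity of $(x-1)$ in $D_p(x)$ (over $\F_p[x]$) is exactly $r_p$, while $D_p(x)/(1-x)^{r_p}$ evaluated at $x=1$ is a nonzero constant mod $p$ (either $1$ or $2$, and here we use that $p$ is odd so that $x+1$ does not vanish at $x=1$).

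First, I would reduce the identity $f_p(x) D_p(x) = F_p(x)$ modulo $p$ and substitute the factorization from Proposition \ref{mod_p}, obtaining
\[
\bar f_p(x) \cdot D_p(x) \equiv (x-1)^{\frac{p-1}{2}} h(x) \pmod p,
\]
where $\bar f_p$ denotes the reduction of $f_p$ and $h(1) \not\equiv 0 \pmod p$. Next I would compare the $(x-1)$-adic valuations $v_{x-1}$ on both sides in the UFD $\F_p[x]$: the right-hand side has $v_{x-1} = (p-1)/2$, and the left-hand side has $v_{x-1} = r_p + v_{x-1}(\bar f_p)$. Hence $v_{x-1}(\bar f_p) = (p-1)/2 - r_p$, which gives a factorization $\bar f_p(x) = (x-1)^{(p-1)/2 - r_p} g(x)$ in $\F_p[x]$ with $g(x) \in \F_p[x]$.

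To finish, I need to check that $g(1) \neq 0$. This follows from the same valuation count: writing $D_p(x) = (1-x)^{r_p} \tilde D_p(x)$ with $\tilde D_p(1) \not\equiv 0 \pmod p$, the cancellation yields $(-1)^{r_p} g(x) \tilde D_p(x) \equiv h(x) \pmod p$, and evaluating at $x=1$ gives $g(1)\tilde D_p(1) \equiv \pm h(1) \not\equiv 0 \pmod p$.

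I do not expect any genuine obstacle here, since all the content is concentrated in Proposition \ref{mod_p}; the corollary is a bookkeeping step. The only minor points to be careful with are verifying that dividing $F_p$ by $D_p$ is well-defined modulo $p$ (which is ensured by the fact that $f_p \in \Z[x]$, so the quotient already exists integrally), and that the factors $x$ and $x+1$ appearing in $D_p$ contribute nothing to $v_{x-1}$, which is clear for odd $p$.
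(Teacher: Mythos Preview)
Your proposal is correct and follows exactly the route the paper intends: the paper gives no separate argument but simply states that the corollary follows from Proposition~\ref{mod_p}, and your $(x-1)$-adic valuation computation in $\F_p[x]$ is precisely the routine bookkeeping that makes this deduction explicit. There is nothing to add.
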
 

If $p \geq 7$ then $\frac{p-1}{2}-r_p \geq 2$. Therefore, if $p \geq 7$ then $x=1$ is a multiple root of $f_p(x)$. We have the following immediate corollary. 
\begin{cor}
If $p \geq 7$ then $p\mid \Delta(f_p)$ where $\Delta(f_p)$ is the discriminant of $f_p$.
\end{cor}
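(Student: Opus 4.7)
The plan is to deduce this directly from the preceding Corollary~\ref{cor: Fekete mod p}. First, I would verify that the exponent $\frac{p-1}{2}-r_p$ is at least $2$ for every prime $p\geq 7$: if $p\equiv 3\pmod 4$ then $r_p=1$ and the exponent equals $\frac{p-3}{2}\geq 2$; if $p\equiv 1\pmod 4$ (so in fact $p\geq 13$) then $r_p=2$ and the exponent equals $\frac{p-5}{2}\geq 4$. In either case, the reduction $\bar f_p\in \F_p[x]$ admits $x=1$ as a root of multiplicity at least $2$.

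Next, I would note that $f_p$ is monic. Indeed, the leading coefficient of $F_p$ is $\chi_p(p-1)=\chi_p(-1)$, which is $-1$ when $p\equiv 3\pmod 4$ and $+1$ when $p\equiv 1\pmod 4$; dividing by $x(1-x)$ (respectively by $x(1-x)^2(x+1)$) produces $+1$ in each case. Since the discriminant of a monic polynomial is a polynomial with integer coefficients in its coefficients, we have $\Delta(f_p)\bmod p=\Delta(\bar f_p)$ inside $\F_p$.

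Finally, the discriminant of a monic polynomial over a field vanishes if and only if the polynomial has a repeated root in the algebraic closure. Since $\bar f_p$ has $x=1$ as a repeated root by the first step, we conclude $\Delta(\bar f_p)=0$ in $\F_p$, hence $p\mid \Delta(f_p)$.

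There is no real obstacle here; the argument is essentially the direct translation of ``$f_p$ has a multiple root modulo $p$'' into ``the discriminant of $f_p$ vanishes modulo $p$,'' and the work has already been done in Corollary~\ref{cor: Fekete mod p}.
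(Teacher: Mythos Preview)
Your proof is correct and follows essentially the same approach as the paper: the paper simply observes that $\frac{p-1}{2}-r_p\geq 2$ for $p\geq 7$, so $x=1$ is a multiple root of $f_p$ modulo $p$, and declares the divisibility an immediate corollary. You have supplied the details the paper omits (monicity of $f_p$ and compatibility of the discriminant with reduction mod $p$), but the underlying argument is the same.
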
 
In fact, the following stronger statement holds. Here, for a prime number $q$ and an integer $n$, $v_q(n)$ is the $q$-adic valuation of $n$.

\begin{cor}
For all primes $p$ we have 
\[ v_p(\Delta(f_p)) \geq \frac{p-3}{2}-r_p .\] 

\end{cor}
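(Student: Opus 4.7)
The plan is to lift the factorization of $f_p$ modulo $p$ supplied by Corollary~\ref{cor: Fekete mod p} to the $p$-adic integers and then apply the multiplicativity of the discriminant. Set $m = \tfrac{p-1}{2} - r_p$. Corollary~\ref{cor: Fekete mod p} gives $f_p(x) \equiv (x-1)^m\,\bar g(x) \pmod{p}$ with $\bar g(1)\neq 0$, so the two factors are coprime in $\F_p[x]$. By Hensel's lemma, this lifts uniquely to a factorization $f_p = \tilde F\cdot \tilde G$ in $\Z_p[x]$ with $\tilde F,\tilde G$ monic, $\deg\tilde F = m$, $\tilde F \equiv (x-1)^m \pmod p$, and $\tilde G \equiv \bar g \pmod p$. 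The standard identity
\[ \Delta(f_p) \;=\; \Delta(\tilde F)\,\Delta(\tilde G)\,\mathrm{Res}(\tilde F,\tilde G)^{2}, \]
together with the fact that each factor on the right is a $p$-adic integer, reduces the problem to the key estimate $v_p(\Delta(\tilde F)) \geq m-1$.

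To prove this, I substitute $y = x-1$ and work with $Q(y) := \tilde F(y+1)\in \Z_p[y]$, which is monic of degree $m$, satisfies $\Delta(Q) = \Delta(\tilde F)$, and reduces to $y^m$ modulo $p$. All $m$ roots $\beta_1,\dots,\beta_m\in\bar\Q_p$ of $Q$ have $v_p(\beta_i)>0$. Group them by the slopes of the Newton polygon of $Q$: let the distinct root valuations be $w_1>w_2>\cdots>w_s>0$ with multiplicities $\ell_1,\dots,\ell_s$, and put $L_k = \ell_1+\cdots+\ell_k$. The penultimate vertex of the Newton polygon sits at $(L_{s-1},\ell_s w_s)$; since $L_{s-1}<m$, the hypothesis $Q\equiv y^m\pmod p$ forces $v_p(a_{L_{s-1}})\geq 1$, and hence
\[ \ell_s w_s \;\geq\; 1. \]

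I then bound $v_p(\Delta(Q)) = 2\sum_{i<j} v_p(\beta_i-\beta_j)$ by isolating the pairs involving a root of minimal valuation $w_s$. The $\binom{\ell_s}{2}$ pairs internal to this smallest-valuation group each contribute at least $w_s$, while the $\ell_s(m-\ell_s)$ pairs with exactly one root of minimal valuation contribute exactly $w_s$ each (two roots of different valuation have difference-valuation equal to the minimum). Adding these and using $\ell_s w_s\geq 1$ yields
\[ v_p(\Delta(\tilde F)) \;\geq\; \ell_s w_s\,(2m-\ell_s-1) \;\geq\; 2m-\ell_s-1 \;\geq\; m-1, \]
where the last inequality uses $\ell_s\leq m$. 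Combining with the discriminant factorization gives $v_p(\Delta(f_p)) \geq m-1 = \tfrac{p-3}{2}-r_p$, as required.

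The main obstacle is the Newton-polygon bookkeeping in the second and third steps: one must pinpoint the penultimate vertex $(L_{s-1},\ell_s w_s)$, check that $L_{s-1}<m$ so that the congruence $Q\equiv y^m\pmod p$ injects the constraint $\ell_s w_s\geq 1$, and verify that pairs from different slope groups contribute exactly the minimum of the two valuations. The bound is sharp, as witnessed by Eisenstein-type lifts such as $\tilde F(x) = (x-1)^m + p$, whose $m$ roots all have valuation $1/m$ and give $v_p(\Delta) = m-1$ exactly. If $f_p$ happens to be inseparable, so that $\Delta(f_p) = 0$, the stated inequality is vacuous.
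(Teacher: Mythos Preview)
Your argument is correct and self-contained, but it follows a genuinely different path from the paper's own proof. The paper argues in one line: from Corollary~\ref{cor: Fekete mod p} one sees that $(x-1)^{m-1}$ divides $\gcd(\bar f_p,\bar f_p')$ in $\F_p[x]$, so this gcd has degree at least $m-1=\tfrac{p-3}{2}-r_p$, and then invokes the theorem of G\'omez--Guti\'errez--Ibeas--Sevilla (which bounds $v_p$ of a resultant from below by the degree of the gcd of the reductions) applied to $R(f_p,f_p')=\pm\Delta(f_p)$. Your route instead lifts the coprime factorization to $\Z_p[x]$ via Hensel, reduces to the factor $\tilde F\equiv (x-1)^m$, and uses a Newton-polygon count on the shifted polynomial $Q(y)=\tilde F(y+1)$ to obtain $v_p(\Delta(\tilde F))\geq m-1$ directly. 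The trade-off is clear: the paper's proof is a two-line appeal to a black box, whereas yours is longer but needs no outside reference and in effect reproves the relevant special case of the cited resultant bound. Both approaches yield exactly the same inequality, and your Eisenstein example $\tilde F(x)=(x-1)^m+p$ nicely shows that the estimate $v_p(\Delta(\tilde F))\geq m-1$ on the lifted factor cannot be improved in general.
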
 
\begin{proof}
By Corollary~\ref{cor: Fekete mod p}, the degree of the greatest common divisor of the reductions of $f_p$ and
$f_p^\prime$ modulo $p$ is at least $\frac{p-3}{2}-r_p$. Now the statement follows from \cite[Theorem]{GGIS}.
\end{proof}

We also have the following estimation at the prime $q=2$. 

\begin{prop} One has $v_2(\Delta(f_p))\geq \deg f_p$.
\end{prop}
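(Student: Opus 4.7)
The plan is to show that the reduction $\overline{f_p} := f_p \bmod 2$ is a perfect square in $\F_2[x]$, equivalently (in characteristic two) a polynomial in $x^2$. Since every Legendre symbol is odd, $F_p \equiv x + x^2 + \cdots + x^{p-1} \equiv x(x^{p-1}+1)/(x+1) \pmod 2$. Using the Frobenius identity $x^{p-1}+1 = (x^m+1)^2$ in $\F_2[x]$, where $m = (p-1)/2$, we get $F_p \equiv x(x^m+1)^2/(x+1) \pmod 2$. The denominator $x(1-x)$ (for $p\equiv 3\pmod 4$) reduces to $x(x+1)$, and $x(1-x)^2(x+1)$ (for $p \equiv 1\pmod 4$) reduces to $x(x+1)^3$, giving
\[
\overline{f_p} \equiv \begin{cases} \bigl((x^m+1)/(x+1)\bigr)^2 & \text{if } p \equiv 3 \pmod 4, \\[3pt] \bigl((x^m+1)/(x+1)^2\bigr)^2 & \text{if } p \equiv 1 \pmod 4. \end{cases}
\]
In the second case, the division is valid in $\F_2[x]$ because $m = (p-1)/2$ is even when $p \equiv 1 \pmod 4$, so that $(x+1)^2$ divides $x^m+1$ (note $\tfrac{d}{dx}(x^m+1)=mx^{m-1}$ vanishes at $x=1$ when $m$ is even).

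Using $h(x)^2 = h(x^2)$ in characteristic $2$, the square structure implies $\overline{f_p}\in\F_2[x^2]$. Hence every coefficient of $f_p$ at an odd power of $x$ is an even integer. Writing $f_p(x) = \sum a_i x^i$, each coefficient $i\, a_i$ of $f_p'(x) = \sum i\,a_i x^{i-1}$ is even: for $i$ even the factor $i$ provides a $2$, and for $i$ odd the coefficient $a_i$ does. Therefore $f_p' = 2g$ for some $g \in \Z[x]$ of degree $\deg f_p - 1$ (the leading coefficient of $g$ being $(\deg f_p)/2 \neq 0$ since $\deg f_p$ is even and positive for $p\geq 7$).

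Since $f_p$ is monic of degree $d = \deg f_p$, one has $\Delta(f_p) = (-1)^{d(d-1)/2}\,\mathrm{Res}(f_p, f_p')$. The scaling property $\mathrm{Res}(P, cQ) = c^{\deg P}\,\mathrm{Res}(P, Q)$ applied with $c = 2$ gives $\mathrm{Res}(f_p, f_p') = 2^d\,\mathrm{Res}(f_p, g)$, whence $2^d \mid \Delta(f_p)$, i.e.\ $v_2(\Delta(f_p)) \geq \deg f_p$.

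The one subtlety worth care is the $p \equiv 1 \pmod 4$ case: one must confirm that $(x+1)^4$ (not just $(x+1)^3$) divides $(x^m+1)^2$ in $\F_2[x]$, so that $\overline{f_p}$ is a genuine square rather than an $(x+1)$ times a square; this reduces to the parity of $m$, which is handled by the congruence class of $p$ modulo $4$. Once this is verified, the rest is the formal resultant computation above.
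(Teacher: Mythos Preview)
Your proof is correct and follows the same overall strategy as the paper: establish that $f_p' \equiv 0 \pmod 2$, then use the resultant scaling $\operatorname{Res}(f_p, 2g) = 2^{\deg f_p}\operatorname{Res}(f_p,g)$ to extract the factor $2^{\deg f_p}$ from $\Delta(f_p)$. The endgame is identical.

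The difference lies in how the vanishing of $f_p' \bmod 2$ is obtained. The paper works with $f(x)=F_p(x)/(x(1-x))$, computes its coefficients explicitly as partial sums $a_k=\sum_{a=1}^{k+1}\left(\tfrac{a}{p}\right)$, and observes that for odd $k$ this is a sum of an even number of $\pm 1$'s, hence even; for $p\equiv 1\pmod 4$ it then transfers this from $f$ to $f_p$ via the product rule modulo $2$. You instead reduce $F_p$ modulo $2$ globally and exploit the Frobenius identity $x^{p-1}+1=(x^{(p-1)/2}+1)^2$ in $\F_2[x]$ to exhibit $\overline{f_p}$ directly as a perfect square, hence a polynomial in $x^2$. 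Your route is arguably cleaner and more structural (the square shape is visible rather than inferred coefficientwise), and it handles both congruence classes uniformly; the paper's route is more elementary and avoids any quotient manipulation in $\F_2[x]$. Both are short and either could replace the other.
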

\begin{proof}
Let $f(x)= \sum\limits_{k=0}^{p-3}a_kx^k=\dfrac{F_p(x)}{x(1-x)}$. Then by using the relation 
\[
(1-x)\sum\limits_{k=0}^{p-3}a_kx^k=\sum_{a=1}^{p-2}\left(\dfrac{a}{p}\right) x^{a-1},
\]
 one has, for $k=0,\ldots,p-3$,
\[
a_k= \sum_{a=1}^{k+1}\left(\dfrac{a}{p}\right).
\] Thus, if $k$ is odd then $a_k\equiv \sum_{a=1}^{k+1} 1\equiv 0\pmod 2$. This implies that $f'(x)\equiv 0\pmod 2$.
Suppose further that $p\equiv 1\pmod 4$. In this case
$f(x)=(1-x)(x+1)f_p(x)$. Thus 
\[
f'(x) =-(x+1)f_p(x) + (1-x)f_p(x) +(1-x)(1+x)f_p'(x) \equiv (x+1)^2 f_p'(x) \pmod 2
\]
This implies that $f_p'(x)\equiv 0\pmod 2$. 
Therefore for any prime number $p$, one always has $f'_p(x)\equiv 0\pmod 2$ and hence  and $f_p'(x)=2h(x)$ for some $h(x)\in \Z[x]$.
Now, one has
\[
\Delta(f_p) =(\pm )R(f_p,f'_p)=(\pm) R(f_p, 2h)=(\pm)2^{\deg f_p} R(f_p,h).
\]
Here, $R(f_p,f'_p)$ is the resultant of $f_p$ and $f'_p$.
This implies that $v_2(\Delta(f_p))\geq \deg f_p$.
\end{proof}

\begin{rmk} The inequality in the above proposition could be strict. For example, for $p=19$, $v_2(\Delta(f_{19}))=18>16=\deg f_{19}$. 
\end{rmk}

\section*{Acknowledgments}
The first-named author would like to thank Professor Paulo Ribenboim for many discussions concerning class numbers of algebraic number fields and properties of Bernoulli numbers. The second-named author would like to thank Professor Kazuya Kato on some helpful discussions on $p$-adic $L$-functions. He is thankful to Professor David Harvey for his help with the numerical computations of irregular primes.  He is also grateful to Ricardo Buring for his expertise and help with Sagemath. The third-named author gratefully acknowledges the Vietnam Institute for Advanced Study in Mathematics (VIASM) for hospitality and support during a visit in 2021. We would like to thank Professor  Art\={u}ras Dubickas for his interest in our paper and for sending us references on Proposition 4.3, which was proved earlier. We thank Professor Franz Lemmermeyer for his nice comments on our paper after we posted it on arXiv and for sending us interesting references. 
We also thank Professor Danny Neftin for his encouragement. We thank Professor William Duke who kindly sent to us a copy of the paper \cite{[DDS]} which we used in Proposition \ref{prop:Galois_computation_1}.  We would like to thank the Editor of the Journal of Number theory for his kind encouragement and valuable suggestions about adding further numerical evidence towards our conjectures on the Galois groups of Fekete polynomials. 
Last but not least, we are also grateful to the referee for his/her comments and valuable suggestions which we have used to improve our exposition.

\end{document}